\newtheorem{cor}{Corollary}[section]
\newtheorem{lemma}[cor]{Lemma}
\newtheorem{teo}[cor]{Theorem}
\newtheorem{rem}[cor]{Remark}
\newtheorem{ass}[cor]{Assumption}
\newcommand{\R}{\mathbb{R}}
\newcommand{\N}{\mathbb{N}}
\numberwithin{equation}{section}
\begin{document}

\title{Flow of elastic networks: long-time existence result}

\author{{\sc Anna Dall'Acqua} \thanks{Institut f\"ur Analysis, Universit\"at Ulm, Germany,
\url{anna.dallacqua@uni-ulm.de}}
{\sc Chun-Chi Lin} \thanks{Department of Mathematics, National Taiwan Normal University, Taipei, 116 Taiwan, \url{chunlin@math.ntnu.edu.tw}} and
 {\sc Paola Pozzi}  \thanks{Fakult\"at f\"ur Mathematik, Universit\"at Duisburg-Essen, Germany, \url{paola.pozzi@uni-due.de}} 
}

\date{\today}
\maketitle

\begin{abstract}
We provide a long-time existence and sub-convergence  result for the elastic flow of a three network in $\R^{n}$ under some mild topological assumptions. The evolution is such that the sum of the elastic energies of the three curves plus their weighted lengths decrease in time. Natural boundary conditions are considered at the boundary of the curves and at the triple junction.
\end{abstract}

\textbf{Keywords:} geometric evolution, elastic networks, junctions, long-time existence.  
\\

\textbf{MSC(2010):} primary 35K52, 53C44; secondary 35K61, 35K41


\tableofcontents

\section{Introduction}
We consider the long-time evolution of an \emph{elastic} three networks in $\R^{n}$ ($n \geq 2$) as depicted in Figure~1, that is with three fixed boundary points $P_{1}$, $P_{2}$, $P_{3}$ and one moving triple junction. That is, a three-pointed curved star with a star-center that may move in time. 
\begin{figure}[h]
\setlength{\unitlength}{.5mm}
\begin{center}
\begin{picture}(120, 75)

\put(10,0){\circle*{2}}
\put(100,0){\circle*{2}}
\put(45,70){\circle*{2}}

\put(45,75){$P_3$}
\put(0,0){$P_1$}
\put(105,0){$P_2$}

\qbezier(10,0)(30,25)(45,30)
\qbezier(100,0)(75,20)(45,30)
\qbezier(45,70)(45,50)(45,30)
\end{picture}
\caption{The model}
\end{center}
\label{Fig1}
\end{figure}
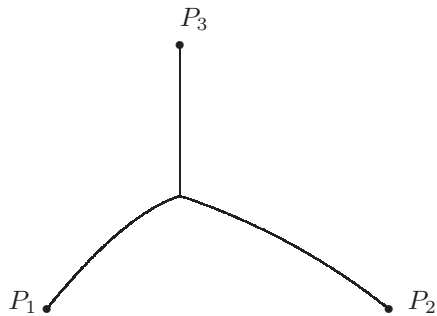

We study and give a long time existence result for elastic motion   with a penalization term on the length and some extra topological conditions that prevents the appearance of some pathological cases: for instance,  the triple junction should not be allowed to collapse on one of the boundary points $P_{i}$.

As far as we know, research on elastic networks is still at its beginning stage in mathematics. A lot of literature is focused on the case where the motion occurs by mean curvature flow (also called  curve shortening flow, a second order flow that decreases length of the curves), see for instance the survey paper \cite{MNPSsurvey}. A short time existence result for an elastic network of planar curves has been given recently in \cite{GMP} while the stationary case (in the special case of so-called \lq Theta\rq ~networks) has been considered in \cite{DNP}. Elastic flow with junctions is considered numerically in \cite{BGN12b}: in particular an appropriate variational formulation and two types of junction conditions, the so called $C^{0}$ respectively $C^{1}$ boundary conditions, are discussed. In that work the authors concentrated in the derivation of the Euler-Lagrange equations and did many numerical simulations.

Networks and flow of networks arise naturally in the study of multiphase systems and of the dynamics of their interfaces, see for instance \cite{LRV,G}. Elastic networks appear in some investigation in mechanical engineering or material sciences related to polymer gels, fiber or protein networks, e.g., \cite{BC07}, \cite{GD14}. 
In these physical systems, junctions between elastic beams play an important role in determining mechanical properties, e.g., rigidity or deformability.

\bigskip 

Before stating our main result, we introduce briefly the set up of  our work and recall some well known facts.

The elastic energy of a smooth regular curve (an immersion) $f: I \to \mathbb{R}^{n}$, $n \geq 2 $, $I=(0,1)$ is given by 
\begin{equation}\label{Elasten}
\mathcal{E}(f)=\frac{1}{2} \int_I |\vec{\kappa}|^2 ds,
\end{equation}
where $ds=|\partial_x f| dx$ is the arc-length element and $\vec{\kappa}$ is the curvature vector of the curve. Defining $\partial_s = |\partial_x f|^{-1} \partial_x f$, then $\vec{\kappa}=\partial_s^2 f$. The length is given by
\begin{equation*}
\mathcal{L}(f)=\int_I  ds.
\end{equation*}
For $\lambda \geq 0$ let
\begin{equation}\label{Elambda}
\mathcal{E}_{\lambda}(f)=\mathcal{E}(f) + \lambda \mathcal{L}(f) \, .
\end{equation} 
This is the energy that we consider:  the length of the curve is allowed to change in time but its growth is penalized according to the weight $\lambda$.

Now consider three  smooth regular curves $f_i:I \to \R^n$, $I=[0,1]$, $i=1,2,3$, such that
\begin{enumerate}
\item The end-points are fixed: 
\begin{equation}\label{IPs}
f_1(1)=P_1, \; f_2(1)=P_2,\;  f_3(1)=P_3,
\end{equation}
with given distinct points $P_i$, $i=1,2,3$, in $\R^n$ (recall Figure~\ref{Fig1}). Of course, there is a plane that contains these three points.
\item The curves start at the same point
$$ f_1(0)=f_2(0)=f_3(0) \, . $$
\end{enumerate}
In the following we call $\Gamma=\{f_1,f_2,f_3\}$ a \textit{three-pointed star network} or simply \textit{network}.

For $\lambda= (\lambda_1,\lambda_2, \lambda_3) \in \mathbb{R}_{+}^3$, the energy of the network $\Gamma=\{f_1,f_2,f_3\}$ is given by
\begin{equation}\label{energyNetwork}
\mathcal{E}_{\lambda}(\Gamma) = \sum_{i=1}^3 \mathcal{E}_{\lambda_i}(f_i) \, .
\end{equation}
Here and in the following we agree that $\mathcal{E}(\Gamma):=\mathcal{E}_{0}(\Gamma)$.

We let the network $\Gamma$ evolve in time according to an  $L^{2}$-gradient flow for the energy~$\mathcal{E}_{\lambda}$. Natural  boundary conditions are imposed on the three curves (more details are given in Section~\ref{sec:2}).
Our main result is the following:
\begin{teo}\label{mainthm}
Let $\Gamma_0=\{f_{1,0},f_{2,0}, f_{3,0}\}$ be a network of regular smooth curves in $\R^n$ such that:  
\begin{align} \nonumber
f_{1,0}(1) & =P_1, \; f_{2,0}(1)=P_2,\;  f_{3,0}(1)=P_3,\\  \label{initdatum1}
f_{1,0}(0) & =f_{2,0}(0)=f_{3,0}(0), \\ \nonumber
\vec{\kappa}_{i,0}(x) & = 0 \mbox{ for }x=0,1 \mbox{ and }i=1,2,3
\end{align}
as well as 
\begin{equation}\label{initdatum2}
\sum_{i=1}^3(\nabla_s \vec{\kappa}_{i,0} - \lambda_i \partial_s f_{i,0}) \Big|_{x=0}= 0 \, .
\end{equation}
Moreover let $\Gamma_{0}$ satisfy appropriate compatibility conditions,and be such that at the triple junction at least two curves form a strictly positive angle.
Then the following holds:

\noindent  \emph{\textbf{ (i) Long-time existence result:}} the equations
\begin{equation}\label{(P)}
 \partial_t f_i - \langle  \partial_t f_i, \partial_{s} f_{i} \rangle \partial_{s} f_{i} = - \nabla_{s}^2 \vec{\kappa}_i -\frac12 |\vec{\kappa}_i|^2 \vec{\kappa}_i  + \lambda_i \vec{\kappa}_i  \mbox{ on } (0,T) \times I \mbox{ for } i=1,2,3,
\end{equation} 
with boundary conditions
\begin{equation}\label{bc1}
\left\{\begin{aligned}
f_i(t,1)& =P_i, &\mbox{ for all }t\in (0,T), i=1,2,3, \\
\vec{\kappa}_i(t,1) & =0=\vec{\kappa}_i(t,0)  &\mbox{ for all }t\in (0,T), \ i=1,2,3,\\
f_1(t,0) & =f_2(t,0)=f_3(t,0) &\mbox{ for all }t\in (0,T), \\
\mbox{and }& \sum_{i=1}^3 (\nabla_s \vec{\kappa}_{i}(t,0) - \lambda_i \partial_s f_{i}(t,0)) = 0  &\mbox{ for all }t\in (0,T),
\end{aligned} \right.
\end{equation}
and initial value
$$ \Gamma(t=0)  = \{f_1(0,\cdot), f_2(0,\cdot), f_3(0,\cdot)\}= \Gamma_0, \mbox{ in } I$$
admit a smooth global solution in time, provided that, along the flow, the  lengths $\mathcal{L}(f_{i})$ of the three curves are uniformly bounded from below and that the dimension of the space spanned by the unit tangents $\partial_{s}f_{i}$, $i=1,2,3$,  at the triple junction is bigger or equal to two.

\noindent \emph{\textbf{(ii) Sub-convergence result:}} under the mentioned hypothesis, it is possible to find a sequence of time $t_{i} \to \infty$ such that the networks $\Gamma(t_{i})$ sub-converge, after an appropriate reparametrization, to a critical point for the energy $\mathcal{E}_{\lambda}(\Gamma)$  and subject to the boundary conditions given  in \eqref{initdatum1} and  \eqref{initdatum2}.
\end{teo}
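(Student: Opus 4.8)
The plan is to follow the now-standard scheme for proving long-time existence of higher-order geometric flows (as in the work of Dziuk--Kuwert--Sch\"atzle for the elastic flow of closed curves, and its boundary-value variants), adapted to the network setting. Because the flow \eqref{(P)} is only parabolic in the normal direction — the tangential part of $\partial_t f_i$ is left free by the left-hand side — the first step is to introduce a suitable \emph{analytic reparametrization} of the problem: one fixes a tangential velocity (for instance chosen so that $|\partial_x f_i|$ evolves in a controlled way, or via a DeTurck-type trick) turning \eqref{(P)} into a genuinely parabolic quasilinear system of fourth order for the maps $f_i$. Short-time existence and smoothness for this reparametrized system, together with compatibility of the boundary conditions at $x=0,1$ and at the triple junction, should be quoted from / reduced to the short-time theory (cf.\ \cite{GMP}); the geometric solution is then recovered by undoing the reparametrization, which does not affect any of the geometric quantities appearing below.

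The core of the argument is a priori control of the geometry on any finite time interval $[0,T)$ on which the solution exists. First I would record the energy identity: since the flow is the $L^2$-gradient flow of $\mathcal{E}_\lambda$ and the boundary conditions in \eqref{bc1} are exactly the natural ones that kill all boundary terms in the first variation, one gets
\begin{equation*}
\frac{d}{dt}\,\mathcal{E}_{\lambda}(\Gamma(t)) = -\sum_{i=1}^3 \int_I \bigl|\partial_t^\perp f_i\bigr|^2\, ds \le 0,
\end{equation*}
hence $\mathcal{E}_\lambda(\Gamma(t))\le \mathcal{E}_\lambda(\Gamma_0)$ for all $t$, and in particular a uniform bound on $\sum_i \int_{I}|\vec\kappa_i|^2\,ds$ and on $\sum_i\mathcal{L}(f_i)$. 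Combined with the hypothesis that each $\mathcal{L}(f_i)$ stays bounded \emph{below} and that $P_1,P_2,P_3$ are fixed, this prevents curves from shrinking to points. The next and central step is to bootstrap this to bounds on all derivatives of curvature. One differentiates the evolution equation for $\vec\kappa_i$ (which is of the schematic form $\partial_t\vec\kappa = -\nabla_s^4\vec\kappa + \text{lower order polynomial in }\vec\kappa,\nabla_s\vec\kappa,\ldots,\lambda$) and derives differential inequalities for $\int_I |\nabla_s^m \vec\kappa_i|^2\,ds$; using interpolation inequalities on curves (Gagliardo--Nirenberg type, with the total length controlled), the lower-order terms are absorbed, and one obtains, for each $m$, a bound $\sup_{[0,T)}\sum_i\int_I|\nabla_s^m\vec\kappa_i|^2\,ds < \infty$ depending only on the data and on $T$. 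The boundary terms produced by integration by parts at $x=0,1$ and at the triple junction must be handled here — this is where the boundary conditions $\vec\kappa_i=0$ at the endpoints, the concurrence condition, and the condition $\sum_i(\nabla_s\vec\kappa_i-\lambda_i\partial_s f_i)=0$ at the junction, together with the compatibility conditions, are used to show those terms either vanish or are controllable.

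The place where the extra topological hypotheses of the theorem enter is precisely in controlling the geometry \emph{at the triple junction}: without assuming the junction stays away from the $P_i$ and that the unit tangents there span at least a two-dimensional space, the boundary terms at $x=0$ in the energy estimates (and the invertibility of the relevant boundary operator needed to solve for $\partial_t f_i(t,0)$ and close the parabolic estimates) cannot be controlled — this, together with the lower length bound, is what I expect to be the main obstacle and the heart of the paper. Once the uniform-in-$[0,T)$ curvature bounds are in hand, one converts them (again via interpolation and the fixed endpoints) into uniform $C^k$ bounds on the $f_i$ after the reparametrization, so the solution extends past $T$; since $T$ was arbitrary, the flow exists for all time, proving (i). For the sub-convergence statement (ii), integrating the energy identity over $[0,\infty)$ gives $\int_0^\infty\!\sum_i\int_I|\partial_t^\perp f_i|^2\,ds\,dt<\infty$, so one can pick $t_j\to\infty$ with $\sum_i\int_I|\partial_t^\perp f_i|^2\,ds\big|_{t_j}\to 0$; the uniform $C^k$ bounds (now global in time) give, via Arzel\`a--Ascoli and after a suitable reparametrization to fix the parametrization gauge, subconvergence of $\Gamma(t_j)$ in $C^\infty$ to a limit network $\Gamma_\infty$, and passing to the limit in the equation shows $\Gamma_\infty$ has vanishing normal velocity, i.e.\ is a critical point of $\mathcal{E}_\lambda$ satisfying the boundary conditions \eqref{initdatum1}, \eqref{initdatum2}.
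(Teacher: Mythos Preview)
Your overall architecture is right and matches the paper's: short-time existence plus tangential reparametrization, energy decay giving $L^2$ curvature bounds, then a bootstrap on curvature derivatives via Gronwall-type inequalities, with interpolation absorbing lower order terms, and finally extension past $T$ by contradiction. The sub-convergence argument is also essentially as the paper has it.

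However, you underestimate the difficulty of the boundary terms at the triple junction, and this is where the paper's main technical contribution lies. If you differentiate $\sum_i\int_I|\nabla_s^m\vec\kappa_i|^2\,ds$ and integrate by parts, the boundary terms at $x=0$ are of the form $\sum_i\langle\nabla_s^m\vec\kappa_i,\nabla_s^{m+3}\vec\kappa_i\rangle$. The only boundary condition at the junction that relates the three curves is the \emph{sum} condition $\sum_i(\nabla_s\vec\kappa_i-\lambda_i\partial_s f_i)=0$; to exploit it (and its time derivatives, which give reductions for $\sum_i\nabla_s^{5+4k}\vec\kappa_i$, cf.\ Lemma~\ref{lemtriple}) you need the factor paired against $\nabla_s^{m+3}\vec\kappa_i$ to be \emph{independent of $i$} at the junction so that it can be pulled out of the sum. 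For $m=2$ this happens by accident, since $-\nabla_s^2\vec\kappa_i$ is the normal part of $\partial_t f_i$ and $\partial_t f_i=\partial_t f_j$ there. For higher $m$ it does not, and the naive bootstrap stalls. The paper's remedy is to work not with $\nabla_s^m\vec\kappa_i$ but with $\nabla_t S_{m,i}$, where $S_{m,i}=\partial_t^{m-1}(-\nabla_s^2\vec\kappa_i+\varphi_i\partial_s f_i)$ equals $\partial_t^m f_i$ at the boundary and hence is common to all three curves there; this is what makes the order-reduction trick at the junction go through at every step.

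A second point you gloss over is the control of the tangential components $\varphi_i$. These are not freely chosen in the interior and forgotten: their boundary values at $x=0$ are \emph{determined} by the geometry (solving a $3\times3$ linear system whose invertibility is exactly the span-$\ge 2$ condition, see Remark~\ref{remjunctphi}), and they and their time derivatives $\partial_t^k\varphi_i(t,0)$ reappear in every evolution equation and every boundary term. The paper fixes $\varphi_i$ to interpolate linearly between its forced boundary values, develops an ``algebra'' for how powers of $\varphi_i^{(l)}$ scale (each $\partial_t^l\varphi_i$ counts like $3+4l$ curvature derivatives; see Remark~\ref{rem:varphi} and Lemma~\ref{lem:ordervarphi}), and carries these terms through all the estimates. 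Without this, the $\varphi$-terms are uncontrolled and the Gronwall argument does not close. Your sketch should flag both of these mechanisms as the nontrivial steps rather than folding them into ``boundary conditions kill the boundary terms''.
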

The compatibility conditions are discussed in Appendix~\ref{sec:cc}. Smooth solution means that the three parametrization of the three curves are smooth functions in the time and space variable. The extension of the long-time existence result to the case $\lambda_i \geq 0$ is discussed in Remark \ref{rem:lambda0} below. 

Note that  
the above theorem must be understood in a \emph{geometrical sense}: that is the existence of a global parametrization of  the flow is meant up to reparametrization.
So our result  states that we are able to find a global in time \emph{smooth motion} of the network, provided two topological constraints are fulfilled during the flow: namely that  the lengths of the curves are uniformly bounded from 
below 
and that the the curves never entirely ``collapse'' to a configuration where all tangents vectors are parallel at the triple junctions.
The necessity of the topological constraints occurs naturally as follows: the bound from below on the lengths of the curves is needed to apply interpolation inequalities (cf. for instance Lemma~\ref{lemineqsh} and Lemma~\ref{lemineqshsum} below); that the dimension of the space spanned by the unit tangents at the triple junction should always be bigger or equal to two arise when we express the tangential  components at the boundary in terms of geometric quantities (cf. Remark~\ref{remjunctphi} and Remark~\ref{remjunctphi2} below.)
Not surprisingly it arises also in the proof of short-time existence of the flow given in \cite{GMP} (cf. \cite[Definition~3.2]{GMP}).
At the moment we have no means to control these topological constraints: whether and how this could be done is subject to future studies. 

To achieve our goal, we will consider in place of \eqref{(P)} equations of type
$$\partial_t f_i=  - \nabla_{s}^2 \vec{\kappa}_i -\frac12 |\vec{\kappa}_i|^2 \vec{\kappa}_i  + \lambda_i \vec{\kappa}_i   + \varphi_{i} \partial_{s} f_{i}, \mbox{ on } (0,T) \times I \mbox{ for } i=1,2,3,
$$
where $\varphi_{i}$ are smooth functions. Note that the presence of the tangential components  
 is necessary in order for the flow to fulfill the topological constraint that the curves stay ``glued'' at the triple junction (concurrency condition), with the latter being able to move freely in time.
 A proper choice of tangential component is necessary and is discussed in details in Section~\ref{ste}.

Our strategy can be summarized as follows: starting from a short-time existence result (see Section~\ref{ste}) we reparametrize the flow in such a way that for each curve the maps $\varphi_{i}$  linearly interpolate their values between the boundary points. For this choice of parametrizations we consider  the long-time behavior of the network, and show that if the flow does not exist globally then we obtain a contradiction. This is achieved by obtaining uniform bounds for the curvature and its derivatives, together with a control on the arc-length, up to the maximal time of existence $0<T <\infty$. With these estimates we  are able to extend  the flow  smoothly up to $T$ and then restart the flow,  contradicting the maximality of $T$.

In its essence our proof strategy is not different from our previous works on long-time existence for open elastic  curves in $\mathbb{R}^n$ (\cite{DLP}, \cite{DLP2}, \cite{DP}, \cite{DP-rims}): we use inequalities of Gagliardo-Nirenberg type, exploit the boundary conditions to reduce the order of some boundary terms, and rely heavily on interpolation estimates presented in \cite{DLP2}.
However, the treatment of the tangential components is completely new and far from trivial. In particular the  ``algebra'' for the maps $\varphi_{i}$  (that is how their derivatives in time and space behave with respect to the order of the studied PDEs, see Remark \ref{rem:varphi} for more details) must be thoroughly understood.
Furthermore, an accurate choice of the ``right'' vector field (specifically $\vec{\phi}$ in  Lemma~\ref{lempartint}) for which uniform bounds are derived is absolutely crucial for any of the presented arguments to work.
Finally, because of the interaction of the three curves proofs become increasingly technical and lengthy,  and several new lemmas are derived in order to  make our arguments more concise and more transparent.

The paper  is organized as follows: after introducing the notation and  motivating  the definition of the flow in Section~\ref{sec:2}, we collect several preliminary estimates and interpolation estimates in Section~\ref{sec:3}. The treatment of the boundary term is given in Section~\ref{sec:4}, whereas the influence of the tangential components at the boundary is discussed in Section~\ref{sec:5}. Finally, in Section~\ref{sec:6} we prove the main Theorem~\ref{mainthm}. The proof of the latter is divided in several steps: we have a initial step, where first bounds on the curvature vectors are derived. In the second step, we show the starting procedure of an induction argument: it is at this point that a proper choice of $\varphi_{i}$ starts playing an important  role. 
After the somewhat cumbersome induction step, where uniform estimates for the derivative of the curvature vectors are derived, we are finally able to conclude long-time existence by the contradiction procedure mentioned above.  To ease the presentation many technical proofs are collected in the Appendix. 

\bigskip

\noindent \textbf{Acknowledgements:} This project has been funded by the Deutsche Forschungsgemeinschaft (DFG, German Research Foundation)- Projektnummer: 404870139, 
and Ministry of Science and Technology, Taiwan (MoST 107-2923-M-003 -001 -MY3).

\section{Set up and notation}\label{sec:2}
\subsection{First variation and natural boundary conditions}

First of all recall, that for sufficiently smooth $\phi:I \to \R^n$  the first variation of the length is given by
\begin{equation}
\label{dlength}
\frac{d}{d\varepsilon} \mathcal{L}(f+\varepsilon \phi) \Big|_{\varepsilon=0}= \frac{d}{d\varepsilon}  \int_I   |\partial_x (f+\varepsilon \phi)| dx  \Big|_{\varepsilon=0}=   \langle \partial_s f , \phi \rangle \Big|_{\partial I} - \int_{I} \langle \vec{\kappa} ,  \phi \rangle \, ds \, ,
\end{equation} 
while the first variation of elastic energy \eqref{Elasten} (see \cite[Proof of Lemma A1]{DP}) is
\begin{align}\nonumber
\frac{d}{d\varepsilon} \mathcal{E}(f+\varepsilon \phi) \Big|_{\varepsilon=0} & = \frac{d}{d\varepsilon}  \int_I  |\vec{\kappa}_{f+\varepsilon \phi}|^2  |\partial_x (f+\varepsilon \phi)| dx  \Big|_{\varepsilon=0} \\
& =   \langle \partial_s \phi ,\vec{\kappa} \rangle \Big|_{\partial I}  -  \langle  \phi ,\nabla_s \vec{\kappa} +\frac12  |\vec{\kappa}|^2 \partial_s f \rangle \Big|_{\partial I} + \int_{I} \langle \nabla_s^2 \vec{\kappa} +\frac12 |\vec{\kappa}|^2 \vec{\kappa} , \phi \rangle \, ds  \, .
\label{delastic}
\end{align} 
Here $\nabla_s$ is an operator that on a smooth vector field $\phi$ acts as follows $\nabla_s \phi = \partial_s \phi - \langle \partial_s \phi, \partial_s f \rangle \partial_s f$, i.e. it is the normal projection of $\partial_s \phi$.

Let consider a variation $\Gamma_{t\eta}$ of the network $\Gamma$ given by $t \in (-\delta,\delta)$ and sufficiently smooth vector fields $\{\eta_1,\eta_2,\eta_3\}$, $\eta_i:I \to \R^n$, such that
$$ \eta_i(1)=0 \mbox{ for } i=1,2,3 \mbox{ and }\eta_i(0)=\eta_j(0) \mbox{ for } i,j=1,2,3 \, ,$$
so that $\Gamma_{t\eta}=\{f_1+t\eta_1,f_2+t\eta_2,f_3+t\eta_3\}$ is still a three-pointed star network.

Then by \eqref{dlength} and \eqref{delastic} we find
\begin{align*}
\left. \frac{d}{dt}\right|_{t=0} \mathcal{E}_{\lambda}(\Gamma_{t\eta}) & = \sum_{i=1}^3  \left(\langle \partial_s \eta_i ,\vec{\kappa}_i \rangle \Big|_{\partial I}  -  \langle  \eta_i ,\nabla_s \vec{\kappa}_i +\frac12  |\vec{\kappa}_i|^2 \partial_s f_i \rangle \Big|_{\partial I} + \lambda_i \langle \partial_s f_i , \eta_i \rangle \Big|_{\partial I} \right.\\
& \qquad \left.  + \int_{I} \langle \nabla_s^2 \vec{\kappa}_i +\frac12 |\vec{\kappa}_i|^2 \vec{\kappa}_i ,  \eta_i \rangle \, ds  - \lambda_i \int_{I} \langle \vec{\kappa}_i ,  \eta_i \rangle \, ds\right) \, .
\end{align*}
Notice that here and in the rest of the work for simplicity of notation we simply write $ds$ instead of $ds^i$ and also in the derivatives we simply write $ \nabla_s$ instead of the correct $\nabla_{s^i}$.

Choosing first test functions with compact support we see that each critical point has to satisfy
$$ \nabla_s^2 \vec{\kappa}_i +\frac12 |\vec{\kappa}_i|^2 \vec{\kappa}_i - \lambda_i \vec{\kappa}_i =0 \, , \mbox{ on }(0,1),$$
$i=1,2,3$. Moreover, at $x=1$
$$\sum_{i=1}^3 \langle \partial_s \eta_i ,\vec{\kappa}_i \rangle \Big|_{x=1} =0 \, ,$$
for any test function $\eta_i$ and hence that 
$$\vec{\kappa}_i (1)=0 \mbox{ for }  i=1,2,3.$$ 
Instead, at $x=0$ we find
$$ \sum_{i=1}^3  \left(\langle \partial_s \eta_i  ,\vec{\kappa}_i \rangle -  \langle  \eta_i ,\nabla_s \vec{\kappa}_i + (\frac12  |\vec{\kappa}_i|^2 - \lambda_i)\partial_s f_i \rangle \right) \Big|_{x=0}= 0 \, . $$
This implies
$$\vec{\kappa}_i (0)=0 \mbox{ for }   i=1,2,3 
\, ,$$ 
and together with $\eta_1(0)=\eta_2(0)=\eta_3(0)$ that
$$\sum_{i=1}^3 (\nabla_s \vec{\kappa}_i - \lambda_i \partial_s f_i ) \Big|_{x=0}= 0 \, .$$

\subsection{The flow }
\label{sec:VL}
Let $\Gamma_0=\{f_{1,0},f_{2,0}, f_{3,0}\}$ be a network of regular smooth curves as in the case under consideration, that is satisfying \eqref{initdatum1}, \eqref{initdatum2} and being such that at the triple junction at least two curves form a strictly positive angle.  
Moreover the initial network needs to satisfy a set of compatibility conditions. These are required to ensure that the solution of the parabolic problem is smooth up to the initial time $t=0$. Details are given in Appendix~\ref{sec:cc}. 

We take an $L^2$-flow for the energy $\mathcal{E}_{\lambda}$ with the condition that the network keeps its topological properties along the flow. For this one needs a tangential component. The problem we consider is then
\begin{equation}\label{flownetwork1}
\partial_t f_i  = - \nabla_{s}^2 \vec{\kappa}_i -\frac12 |\vec{\kappa}_i|^2 \vec{\kappa}_i  + \lambda_i \vec{\kappa}_i + \varphi_i \partial_s f_i \mbox{ on } (0,T) \times I \mbox{ for } i=1,2,3,
\end{equation}
with $\varphi_i$, $i=1,2,3$, smooth functions (whose role and definition is discussed below), with boundary conditions given in \eqref{bc1}
and initial value 
$$ \Gamma(t=0)  = \{f_1(0,\cdot), f_2(0,\cdot), f_3(0,\cdot)\}= \Gamma_0, \mbox{ in } I .$$


\subsection{Short time existence and reparametrization}\label{ste}
Our starting point  is a short time existence results, stating that given  an initial network $\Gamma_{0}$ of smooths curves satisfying \eqref{initdatum1}, \eqref{initdatum2} and suitable compatibility conditions (cf. Appendix~\ref{sec:cc}), then there exists an interval of time where our problem  \eqref{flownetwork1} admits a smooth regular solution, meaning that $f_{i} \in C^{\infty}([0,T) \times [0,1])$, $i=1,2,3$, are regular parametrizations. 
A proof of this result  in this form has not been given  yet and will be provided by the authors in future work. 
A short-time existence for \emph{planar} curves, in appropriate H\"older spaces can be found in \cite{GMP}.

Before we proceed some comments are in order, in particular more information must be given on the choice of the tangential components $\varphi_{i}$.
First of all, notice that to construct a short-time existence solution for \eqref{(P)} (together with the chosen initial and boundary conditions), one typically proceeds by 1) introducing a a suitable choice of tangential components $\varphi_{i}$  (in order to factor out the degeneracies due to the geometric invariances), 2) applying  a linearization procedure and Solonnikov-theory (see \cite{Sol}), 3) employing a fixed point argument to show short-time existence for the systems of non-linear equations under consideration (see \cite{Bronsard}). 
In particular we see that at a first sight the tangential components ``destroy the geometric nature'' of the equations \eqref{flownetwork1}. This is, however, not entirely true. It is well known, that tangential components can be modified by a reparametrization and that indeed all geometrical quantities (tangents, curvature vectors,  length, etc.) are invariant under reparametrization. That, for the geometric motion, $\varphi_{i}$ plays no role   in the interior of each curves  becomes evident also during the computations performed in this paper. The role of the tangential components $\varphi_{i}$ becomes tangible only at the boundary of each curve, when enforcing the concurrency condition at the triple junction  and influencing variation of the length of each curve (cf. also Remark~\ref{rem3.3} below). On the other hand even at the boundary the tangential components are determined by geometric quantities (see  Remark~\ref{remjunctphi}, in particular \eqref{phi1in0} below). 
So the ``freedom of choice'' in the tangential components is in principle only allowed in the interior of the curve, where, as we have already stated, the geometric quantities do not ``register'' it. Needles to say, we want to avoid tangential components, hence parametrizations, that destroy the regularity properties of the flow.

For the long-time existence proof it is important to have a good control of the tangential components also in the interior of the curves. To that end we reparametrize our short-time solution as follows: given $f_{i}$ and  $\varphi_{i}$ satisfying  \eqref{flownetwork1} on some time interval $[0,T)$, define
\begin{align}\label{evviva}
\tilde{\varphi}_{i}(t,x) =\varphi_{i}(t,0) \left(1 -\frac{1}{\mathcal{L}(f_{i}(t))}\int_{0}^{x} |\partial_{x}f_{i}(t,\xi)| d\xi \right),
\end{align}
that is $\tilde{\varphi}_{i}$ interpolates linearly the map $\varphi_{i}$ between $x=0$ and $x=1$ (where $\varphi_{i}(t,1)=0$, since at $x=1$ the velocities vanish due to the boundary conditions). 
Next choose a  family of smooth diffeomorphisms $\phi_{i}(t, \cdot): I \to I$, $i=1,2,3$, such that $\phi_{i}(t,x)=x$ for $(t,x) \in [0,T_{1}) \times  \partial I$ and
\begin{align*}
\partial_{t} \phi_{i}(t,x)& = \Big(\tilde{\varphi}_{i}(t, \phi_{i}(t,x)) -\varphi_{i} (t, \phi_{i}(t,x)) \Big) / |\partial_{x} f_{i} (t, \phi_{i}(t,x))|, \quad \text{ for } x \in I,\\
\phi_{i}(0,x)&=x,
\end{align*}
see \cite[Sec.1.3]{Mantegazza} and \cite[App.D]{Lee}.
Here $T_{1}$ is some positive time such that $0< T_{1} \leq T$.
Next, define 
$$ \tilde{f}_{i}(t,x)= f_{i}(t, \phi_{i}(t,x)),  \text{ for } (t,x) \in [0,T_{1}) \times  \bar{I}.$$
A straightforward computation gives then that
\begin{align*}
\partial_{t}\tilde{f}_{i} (t,x) & = \partial_{t} f_{i} (t, \phi_{i}(t,x)) + \partial_{x} f_{i} (t, \phi_{i}(t,x)) \partial_{t}\phi_{i}  (t, \phi_{i}(t,x))\\
&= [\partial_{t} f_{i}  - \langle \partial_{t} f_{i} , \partial_{s} f_{i} \rangle \partial_{s} f_{i} +
\tilde{\varphi}_{i}  \partial_{s} f_{i}](t, \phi_{i}(t,x)).
\end{align*}
In other words we can reparametrize the flow in such a way that the tangential component interpolates linearly its boundary values. At the same time problem \eqref{(P)} is satisfied on $[0, T_{1})$. 
As we will see in the proof of the long time existence, this will be of great help in many estimates. 
Also note that
\begin{align*}
\varphi_{i}(t,x)=\tilde{\varphi}_{i}(t,x),  \text{ for } (t,x) \in [0,T) \times \partial I,
\end{align*}
as well as 
\begin{align*}
\partial_{t }^{m}\varphi_{i}(t,x)=\partial_{t}^{m}\tilde{\varphi}_{i}(t,x),  \text{ for } (t,x) \in [0,T) \times \partial I, \text{ and } m \in \N
\end{align*}
so that $\tilde{f}_{i}$ fulfills all the same boundary conditions as $f_{i}$, $i=1,2,3$.
Summarizing, we can always assume without loss of generality that \eqref{flownetwork1} is fulfilled for some tangential components $\varphi_{i}$ for which \eqref{evviva} holds. This fact will be assumed henceforth.

\section{Preliminaries} \label{sec:3}
First of all we state a simple fact that will be used repeatedly  in the computations that follow: for $\vec{\phi}$ any smooth normal field along $f$ and $h$ a scalar map we have that for any $m \in \N$
\begin{align}\label{cs1}
&\nabla_{s}(h \vec{\phi}) = (\partial_{s } h ) \vec{\phi} + h\nabla_{s} \vec{\phi}, \qquad \qquad \nabla_{s}^{m}(h \vec{\phi}) = \sum_{r=0}^{m}\binom{m}{r} \partial_{s}^{m-r}h \nabla_{s}^{r} \vec{\phi}\\
\label{cs3}
&\nabla_{t}(h \vec{\phi}) = (\partial_{t } h ) \vec{\phi} + h\nabla_{t} \vec{\phi}, \qquad \qquad \nabla_{t}^{m}(h \vec{\phi}) = \sum_{r=0}^{m}\binom{m}{r} \partial_{t}^{m-r}h \nabla_{t}^{r} \vec{\phi}\\
\label{cs2}
&\nabla_{t}(h \, \partial_{s} f )= h \nabla_{t} (\partial_{s} f),
\end{align}
where $\nabla_t \phi = \partial_t \phi - \langle \partial_t \phi, \partial_s f \rangle \partial_s f$.

\begin{lemma}[\textbf{Evolution of geometric quantities}]\label{lemform} 
Let $f:[0, T)\times I \rightarrow \mathbb{R}^{n}$ be a smooth solution of $\partial_{t} f = \vec{V} + \varphi \tau$ for $t \in (0, T)$ with $\vec{V}$ the normal velocity. Given $\vec{\phi}$ any smooth normal field along $f$, the following formulas hold.
\allowdisplaybreaks{\begin{align}
\label{a}
\partial_{t}(ds)&=(\partial_{s} \varphi - \langle \vec{\kappa}, \vec{V} \rangle) ds \\
\label{b}
\partial_{t} \partial_{s}- \partial_{s}\partial_{t} &= (\langle \vec{\kappa}, \vec{V} \rangle -\partial_{s} \varphi) \partial_{s}\\
\label{c}
\partial_{t} \tau &= \nabla_{s} \vec{V} + \varphi \vec{\kappa}\\
\label{d}
  \partial_{t} \vec{\phi}&= \nabla_{t} \vec{\phi}- \langle \nabla_s \vec{V} + \varphi \vec{\kappa}, \vec{\phi} \rangle \tau \\ 
\label{e0}
\partial_{t} \vec{\kappa} & = \partial_{s} \nabla_{s} \vec{V} + \langle \vec{\kappa}, \vec{V}\rangle  \vec{\kappa} + \varphi \partial_{s} \vec{\kappa} \\
\label{e}
\nabla_{t} \vec{\kappa}& = \nabla_{s}^{2} \vec{V} + \langle \vec{\kappa}, \vec{V}\rangle  \vec{\kappa} + \varphi \nabla_{s} \vec{\kappa}\\
\label{f}
(\nabla_{t}\nabla_{s}- \nabla_{s}\nabla_{t}) \vec{\phi} &=(\langle \vec{\kappa},\vec{V} \rangle -\partial_{s }\varphi) \nabla_{s}\vec{\phi} +[\langle \vec{\kappa}, \vec{\phi}\rangle \nabla_{s}\vec{V} -\langle \nabla_{s} \vec{V}, \vec{\phi} \rangle \vec{\kappa}]. 
\end{align}} 
\end{lemma}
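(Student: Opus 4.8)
\textbf{Proof strategy for Lemma~\ref{lemform}.} The plan is to derive each formula by direct differentiation, working from the definition $ds = |\partial_x f|\,dx$ and $\partial_s = |\partial_x f|^{-1}\partial_x$, and using the splitting $\partial_t f = \vec V + \varphi\tau$ with $\vec V$ normal and $\tau = \partial_s f$. The key preliminary observation, which I would establish first, is the commutator identity \eqref{b}: since $\partial_x$ and $\partial_t$ commute, one computes $\partial_t|\partial_x f| = \langle \partial_x\partial_t f, \partial_s f\rangle = |\partial_x f|\langle \partial_s(\vec V + \varphi\tau), \tau\rangle = |\partial_x f|(\partial_s\varphi - \langle \vec\kappa, \vec V\rangle)$, using that $\vec V$ is normal so $\langle \partial_s \vec V, \tau\rangle = -\langle \vec V, \vec\kappa\rangle$, and that $\langle \partial_s(\varphi\tau),\tau\rangle = \partial_s\varphi$ because $\langle \partial_s\tau,\tau\rangle = 0$. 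This immediately gives \eqref{a}, and then \eqref{b} follows from $\partial_t\partial_s = \partial_t(|\partial_x f|^{-1}\partial_x) = -|\partial_x f|^{-1}(\partial_t|\partial_x f|)\partial_s + \partial_s\partial_t$.

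Next I would obtain \eqref{c} by writing $\partial_t\tau = \partial_t\partial_s f = \partial_s\partial_t f + (\langle \vec\kappa,\vec V\rangle - \partial_s\varphi)\tau = \partial_s(\vec V + \varphi\tau) + (\langle \vec\kappa,\vec V\rangle - \partial_s\varphi)\tau$; expanding $\partial_s(\varphi\tau) = (\partial_s\varphi)\tau + \varphi\vec\kappa$ and cancelling the $(\partial_s\varphi)\tau$ terms leaves $\partial_s\vec V + \langle\vec\kappa,\vec V\rangle\tau + \varphi\vec\kappa$; finally decomposing $\partial_s\vec V = \nabla_s\vec V + \langle\partial_s\vec V,\tau\rangle\tau = \nabla_s\vec V - \langle\vec V,\vec\kappa\rangle\tau$ produces exactly \eqref{c}. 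For \eqref{d}, I use the definition $\nabla_t\vec\phi = \partial_t\vec\phi - \langle\partial_t\vec\phi,\tau\rangle\tau$, so $\partial_t\vec\phi = \nabla_t\vec\phi + \langle\partial_t\vec\phi,\tau\rangle\tau$; since $\vec\phi$ is normal, $\langle\vec\phi,\tau\rangle \equiv 0$, hence $\langle\partial_t\vec\phi,\tau\rangle = -\langle\vec\phi,\partial_t\tau\rangle = -\langle\vec\phi,\nabla_s\vec V + \varphi\vec\kappa\rangle$ by \eqref{c}, giving \eqref{d}.

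With these in hand, \eqref{e0} comes from $\partial_t\vec\kappa = \partial_t\partial_s\tau = \partial_s\partial_t\tau + (\langle\vec\kappa,\vec V\rangle - \partial_s\varphi)\partial_s\tau = \partial_s(\nabla_s\vec V + \varphi\vec\kappa) + (\langle\vec\kappa,\vec V\rangle - \partial_s\varphi)\vec\kappa$, and then expanding $\partial_s(\varphi\vec\kappa) = (\partial_s\varphi)\vec\kappa + \varphi\partial_s\vec\kappa$ cancels the $(\partial_s\varphi)\vec\kappa$ term; one should double-check whether $\partial_s\nabla_s\vec V$ should be left as is or written as $\nabla_s^2\vec V$ plus tangential correction — formula \eqref{e0} keeps $\partial_s\nabla_s\vec V$ while \eqref{e} records the normal projection, so \eqref{e} follows from \eqref{e0} by applying $\nabla_t$ vs.\ $\partial_t$ bookkeeping, or more cleanly by projecting \eqref{e0} normally and using $\nabla_t\vec\kappa = \partial_t\vec\kappa - \langle\partial_t\vec\kappa,\tau\rangle\tau$ together with $\langle\partial_t\vec\kappa,\tau\rangle = -\langle\vec\kappa,\partial_t\tau\rangle$. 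Finally \eqref{f} is a general commutator formula: for any normal field $\vec\phi$, expand $\nabla_t\nabla_s\vec\phi$ and $\nabla_s\nabla_t\vec\phi$ using $\nabla_s\vec\phi = \partial_s\vec\phi - \langle\partial_s\vec\phi,\tau\rangle\tau = \partial_s\vec\phi + \langle\vec\phi,\vec\kappa\rangle\tau$ and the analogous formula with $\nabla_t$, then invoke \eqref{b} and \eqref{c} to handle the commutator of $\partial_t$ and $\partial_s$ and the evolution of $\tau$; the terms reorganize into the $(\langle\vec\kappa,\vec V\rangle - \partial_s\varphi)\nabla_s\vec\phi$ piece coming from \eqref{b} and the bracketed curvature terms coming from the interaction of the tangential corrections with $\partial_t\tau$.

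The computations are all routine once \eqref{a} and \eqref{b} are fixed; the only point requiring genuine care — and the one I would treat as the main obstacle — is bookkeeping the tangential corrections consistently, i.e.\ keeping straight when a term is normal versus tangential and not dropping a contribution from $\langle\partial_s\vec V,\tau\rangle = -\langle\vec V,\vec\kappa\rangle$ or from the $\varphi$-terms. The presence of the tangential component $\varphi\tau$ in the velocity means several formulas that are standard for purely normal flows acquire extra $\varphi\vec\kappa$, $\partial_s\varphi$ terms, and one must verify these land exactly as written in \eqref{a}--\eqref{f}. Since each identity is local and pointwise, the three-curve structure of the network plays no role here: the lemma is applied to each $f_i$ separately with $\varphi = \varphi_i$, $\lambda = \lambda_i$.
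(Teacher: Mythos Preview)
Your proposal is correct and is exactly the kind of straightforward pointwise computation the paper has in mind: the paper's own proof consists of the single sentence ``The proof follows by straightforward computation: see \cite[Lemma 2.1]{DP}.'' Your detailed derivation of \eqref{a}--\eqref{f}, starting from $\partial_t|\partial_x f|$ and propagating the commutator \eqref{b} through the remaining identities, is precisely how that reference proceeds, so there is nothing to add or correct.
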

\begin{proof}
The proof follows by straightforward computation: see \cite[Lemma 2.1]{DP}.
\end{proof}

\paragraph{Decrease of the energy along the flow.}

As a first application of the above lemma we show that the energy decreases along the flow. 
Let $\Gamma(t)=\{f_1(t,\cdot),f_1(t,\cdot),f_1(t,\cdot)\}$ be a three-pointed star network moving according the elastic flow as considered in Section~\ref{sec:VL}. Then by  \eqref{Elambda}, \eqref{a} and \eqref{e} we find
\begin{align*}
\frac{d}{d t} \sum_{i=1}^3 \mathcal{E}_{\lambda}(f_i) & = \sum_{i=1}^3 \int_I \langle \vec{\kappa}_i, \nabla_{s}^{2} \vec{V}_i + \langle \vec{\kappa}_i, \vec{V}_i\rangle  \vec{\kappa}_i + \varphi_i \nabla_{s} \vec{\kappa}_i \rangle ds \\
&  \qquad + \sum_{i=1}^3 \int_I \big( \frac12  |\vec{\kappa}_i|^2 + \lambda_i  \big) ( \partial_s \varphi_i -  \langle \vec{\kappa}_i, \vec{V}_i\rangle ) \rangle ds \, .
\end{align*}
Integrating by parts and using that the curvature is zero at both boundary points
\allowdisplaybreaks{\begin{align*}
\frac{d}{d t} \sum_{i=1}^3 \mathcal{E}_{\lambda}(f_i) & = - \sum_{i=1}^3 \int_I \langle \nabla_{s} \vec{\kappa}_i, \nabla_{s} \vec{V}_i  \rangle ds + \sum_{i=1}^3 \int_I \langle \vec{\kappa}_i,  \langle \vec{\kappa}_i, \vec{V}_i\rangle  \vec{\kappa}_i + \varphi_i \nabla_{s} \vec{\kappa}_i \rangle ds\\
&  \qquad + \hspace{-.1cm}\sum_{i=1}^3 \lambda_i \varphi_i \big|_{\partial I} - \hspace{-.1cm} \sum_{i=1}^3 \int_I \varphi_i \langle \vec{\kappa}_i, \nabla_s\vec{\kappa}_i\rangle  ds  - \hspace{-.1cm}\sum_{i=1}^3 \int_I \big( \frac12  |\vec{\kappa}_i|^2 + \lambda_i  \big) \langle \vec{\kappa}_i, \vec{V}_i\rangle  ds \\
& = - \sum_{i=1}^3 \langle \nabla_{s} \vec{\kappa}_i,\vec{V}_i  \rangle \big|_{\partial I} + \sum_{i=1}^3 \int_I \langle \nabla_s^2\vec{\kappa}_i,  \vec{V}_i\rangle   ds \\
&  \qquad + \sum_{i=1}^3 \lambda_i \varphi_i \big|_{\partial I}  + \sum_{i=1}^3 \int_I \big( \frac12  |\vec{\kappa}_i|^2- \lambda_i  \big)  \langle \vec{\kappa}_i, \vec{V}_i\rangle  ds\\
& = - \sum_{i=1}^3 \langle \nabla_{s} \vec{\kappa}_i-\lambda_i \partial_s f_i,\vec{V}_i  + \varphi_i \partial_s f_i \rangle \big|_{\partial I} + \sum_{i=1}^3 \int_I \langle - \partial_t f_i + \varphi_i \partial_s f_i,  \vec{V}_i\rangle   ds  \\
& = - \sum_{i=1}^3 \langle \nabla_{s} \vec{\kappa}_i-\lambda_i \partial_s f_i,\partial_t f_i \rangle \big|_{\partial I} - \sum_{i=1}^3 \int_I |\partial_t f_i - \varphi_i \partial_s f_i|^2  ds\, ,
\end{align*}}
since $\vec{V}_i = \partial_t f_i- \varphi_i \partial_s f_i$. 

As $f(t,x=1)$ is fixed in time there is no contribution by the boundary terms at $x=1$. At $x=0$ one uses that $\partial_t f_i = \partial_t f_j$. Under natural boundary conditions at zero the boundary term also vanishes and we find that the energy is indeed decreasing.


\begin{lemma}[\textbf{Crucial lemma}]\label{lempartint}
Suppose $\partial_{t}f =\vec{V}+\varphi \tau$ on $(0,T) \times I$. Let $\vec{\phi}$ be a normal vector field along $f$ and $Y=\nabla_{t} \vec{\phi} + \nabla_{s}^{4} \vec{\phi}$.
Then
\begin{align}\label{eqgen0}
\frac{d}{dt} \frac{1}{2}\int_{I} |\vec{\phi}|^{2} ds + \int_{I}
|\nabla_{s}^{2} \vec{\phi}|^{2 } ds & = - [ \langle \vec{\phi}, \nabla_s^3 \vec{\phi} \rangle ]_0^1+ [ \langle \nabla_s \vec{\phi}, \nabla_s^2 \vec{\phi} \rangle ]_0^1
\\ \nonumber
& \qquad +\int_{I} \langle Y + \frac{1}{2} \vec{\phi} \,  \varphi_{s}, \vec{\phi} \rangle ds - \frac{1}{2} \int_{I} |\vec{\phi}|^{2} \langle \vec{\kappa}, \vec{V} \rangle ds ,
\end{align}
\end{lemma}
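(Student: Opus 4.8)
The plan is to differentiate the quantity $\tfrac12\int_I|\vec\phi|^2\,ds$ directly in time, use the evolution formulas of Lemma~\ref{lemform} to rewrite the resulting terms, and then reorganize the highest-order contribution by integrating by parts twice. First I would write
$$\frac{d}{dt}\,\frac12\int_I|\vec\phi|^2\,ds=\int_I\langle\partial_t\vec\phi,\vec\phi\rangle\,ds+\frac12\int_I|\vec\phi|^2\,\partial_t(ds).$$
For the second summand, \eqref{a} gives $\partial_t(ds)=(\partial_s\varphi-\langle\vec\kappa,\vec V\rangle)\,ds$, which already produces the terms $\tfrac12\int_I|\vec\phi|^2\varphi_s\,ds$ and $-\tfrac12\int_I|\vec\phi|^2\langle\vec\kappa,\vec V\rangle\,ds$ appearing on the right-hand side of \eqref{eqgen0}. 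For the first summand I would use that $\vec\phi$ is a normal field: by \eqref{d} the difference $\partial_t\vec\phi-\nabla_t\vec\phi$ is a scalar multiple of $\tau$, hence $\langle\partial_t\vec\phi,\vec\phi\rangle=\langle\nabla_t\vec\phi,\vec\phi\rangle$, and substituting $\nabla_t\vec\phi=Y-\nabla_s^4\vec\phi$ turns this into $\int_I\langle Y,\vec\phi\rangle\,ds-\int_I\langle\nabla_s^4\vec\phi,\vec\phi\rangle\,ds$.

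Next I would integrate the fourth-order term by parts twice. The identity to use is that for normal fields $\vec a,\vec b$ along $f$ one has $\partial_s\langle\vec a,\vec b\rangle=\langle\nabla_s\vec a,\vec b\rangle+\langle\vec a,\nabla_s\vec b\rangle$ — the tangential parts of $\partial_s\vec a$ and $\partial_s\vec b$ drop out against normal fields — so that $\int_I\langle\nabla_s\vec a,\vec b\rangle\,ds=[\langle\vec a,\vec b\rangle]_0^1-\int_I\langle\vec a,\nabla_s\vec b\rangle\,ds$. Applying this first with $\vec a=\nabla_s^3\vec\phi$, $\vec b=\vec\phi$, and then once more with $\vec a=\nabla_s^2\vec\phi$, $\vec b=\nabla_s\vec\phi$, yields
$$\int_I\langle\nabla_s^4\vec\phi,\vec\phi\rangle\,ds=[\langle\nabla_s^3\vec\phi,\vec\phi\rangle]_0^1-[\langle\nabla_s^2\vec\phi,\nabla_s\vec\phi\rangle]_0^1+\int_I|\nabla_s^2\vec\phi|^2\,ds.$$
Collecting all contributions and moving $\int_I|\nabla_s^2\vec\phi|^2\,ds$ to the left-hand side gives exactly \eqref{eqgen0}, after rewriting $[\langle\nabla_s^3\vec\phi,\vec\phi\rangle]_0^1=[\langle\vec\phi,\nabla_s^3\vec\phi\rangle]_0^1$ and $[\langle\nabla_s^2\vec\phi,\nabla_s\vec\phi\rangle]_0^1=[\langle\nabla_s\vec\phi,\nabla_s^2\vec\phi\rangle]_0^1$.

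The computation is elementary, and I do not expect a genuine obstacle: this is a bookkeeping identity. The only points that need a little care are tracking the signs through the two integrations by parts and invoking the Leibniz rule for the normal connection $\nabla_s$ acting on the inner product of two normal fields — which, together with the reduction $\langle\partial_t\vec\phi,\vec\phi\rangle=\langle\nabla_t\vec\phi,\vec\phi\rangle$, is precisely where the hypothesis that $\vec\phi$ be normal enters. The lemma will later be applied to judicious choices of $\vec\phi$ (cf.\ $\vec\phi$ being curvature-type quantities) to generate the energy estimates that drive the long-time existence argument.
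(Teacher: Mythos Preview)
Your proof is correct and follows exactly the approach the paper indicates: differentiate under the integral, apply \eqref{a} for $\partial_t(ds)$, use normality of $\vec{\phi}$ to replace $\partial_t\vec{\phi}$ by $\nabla_t\vec{\phi}$, substitute $\nabla_t\vec{\phi}=Y-\nabla_s^4\vec{\phi}$, and integrate by parts twice. The paper itself gives no further detail beyond ``the claim follows using \eqref{a} and integration by parts'' (with a reference to \cite[Lemma~2.3]{DP}), so you have simply written out what the authors left to the reader.
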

\begin{proof}
See \cite[Lemma 2.3]{DP} for a similar statement. The claim follows  using \eqref{a} and integration by parts.
\end{proof}

\begin{rem}\label{rem3.3}
It turns out that the influence of the tangential part $\varphi$ is dectectable only at the end-points of the considered curve (this is in accordance with the fact that geometric quantities are independent of parametrization).
Suppose  that the curve $f$ moving according to the evolution law $\partial_{t} f = \vec{V} + \varphi \tau$ is fixed at the end-point $x=1$.  We have that $\varphi(t,1)=0$ (here the velocity is zero!) for all $t$: then using \eqref{a} we observe that
\begin{align*}
-\varphi(t,0)= \varphi(t,1)- \varphi(t,0)= \int_{I} (|f_{x}|)_{t} dx + \int_{I} \langle \vec{\kappa}, \vec{V} \rangle ds.
\end{align*}
In other words at the moving point we infer
\begin{align}\label{reltangpart0}
\varphi(t,0)=- \frac{d}{dt} \mathcal{L}(f) -  \int_{I} \langle \vec{\kappa}, \vec{V} \rangle ds.
\end{align}
In particular  using the expression $V = - \nabla_{s}^2 \vec{\kappa} -\frac12 |\vec{\kappa}|^2 \vec{\kappa}  + \lambda\vec{\kappa}$, integration by parts and the boundary conditions $\vec{\kappa}(0)=\vec{\kappa}(1)=0$  we obtain
\begin{align}\label{reltangpart}
\frac{d}{dt} \mathcal{L}(f) +\varphi(t,0) + \int_{I}|\nabla_{s} \vec{\kappa}|^{2} ds+ \lambda \int_{I} |\vec{\kappa}|^{2}ds =
\frac{1}{2} \int_{I} |\vec{\kappa}|^{4} ds. 
\end{align}
It is not surprising that the length plays a role, since  $\varphi(t,0)$ determines how the curve grows or shrinks. 
\end{rem}

As in \cite[Lem.2.3]{DKS} and \cite[Sec.3]{DP} we denote by the product $ \vec{\phi}_1 *  \vec{\phi}_2 * \cdots * \vec{\phi}_k$ the product of $k$ normal vector fields $\vec{\phi}_i$ ($i=1,..,k$) defined as 
$\langle \vec{\phi}_1,\vec{\phi}_2 \rangle \cdot .. \cdot\langle \vec{\phi}_{k-2},\vec{\phi}_{k-1} \rangle \vec{\phi}_{k}$
if $k$ is odd and as 
$\langle \vec{\phi}_1,\vec{\phi}_2 \rangle \cdot .. \cdot \langle \vec{\phi}_{k-1},\vec{\phi}_{k} \rangle$, if $k$ is even. 
The expression $P_b^{a,c}(\vec{\kappa})$ stands for any linear combination of terms of the type
\[
 (\nabla_{s})^{i_1} \vec{\kappa}  * \cdots * (\nabla_{s})^{i_b} \vec{\kappa}  \text{ with }i_1 + \ldots + i_b = a \text{ and }\max i_j \leq c
\]
with universal, constant coefficients. Notice that $a$ gives the total number of derivatives, $b$ denotes the number of factors and $c$ gives a bound on the highest number of derivatives falling on one factor. 
With a slight abuse of notation, 
$|P^{a,c}_b(\vec{\phi})|$ 
denotes any linear combination with non-negative coefficients of terms of  type 
$$|\nabla_{s}^{i_1} \vec{\phi}|\cdot |\nabla_{s}^{i_2} \vec{\phi}| \cdot ... \cdot | \nabla_{s}^{i_{b}} \vec{\phi}| \mbox{ with }i_1 + \dots +i_{b}= a \mbox{ and } \max i_{j} \leq c \, .$$
Observe that for odd $b \in \mathbb{N}$ we have
$ \nabla_s P^{a,c}_b (\vec{\phi})= P^{a+1,c+1}_b (\vec{\phi}).$
For sums we write 
\begin{equation}
\sum_{\substack{[[a,b]] \leq [[A,B]]\\c\leq C}} P^{a,c}_{b} (\vec{\kappa})
:=\sum_{a=0}^{A}\sum_{b=1}^{2A+B-2a} \sum_{c=0}^C\text{ } P_{b}^{a,c}(\vec{\kappa})
\text{.}
\label{eq:sum_P^a_b}
\end{equation}
Similarly we set
$
\sum_{\substack{[[a,b]] \leq [[A,B]]\\c\leq C}}| P^{a,c}_{b} (\vec{\phi})| : = \sum_{a=0}^{A} \sum_{b=1}^{2A+B-2a} \sum_{c=0}^{C} | P^{a,c}_{b}(\vec{\phi}) |\, .
$
For our convenience and motivated by the interpolation inequalities below, we say that the \emph{order} of $\sum_{\substack{[[a,b]] \leq [[A,B]]\\c\leq C}} P^{a,c}_{b} (\vec{\kappa})$ 
is equal to $A+B/2$.

With this notation we can state the following results.
\begin{lemma}
\label{Qlemma}
We have the identities
\begin{align*}
& \partial_{s} \vec{\kappa}= \nabla_{s}\vec{\kappa} -|\vec{\kappa}|^{2}\tau,\\
& \partial_{s}^{m} \vec{\kappa} = \nabla_{s}^{m} \vec{\kappa} +\tau \sum_{\substack{[[a,b]] \leq [[m-1,2]]\\c \leq m-1, \ b \   even}} P^{a,c}_{b} 
(\vec{\kappa}) + \sum_{\substack{[[a,b]] \leq [[m-2,3]]\\c \leq m-2\ b \  odd}} P^{a,c}_{b} (\vec{\kappa})  \quad \text{ for } m\geq 2 \, .
\end{align*}
\end{lemma}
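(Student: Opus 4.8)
The plan is to prove the first identity directly from the definition of $\nabla_s$, and the second by induction on $m$. Recall that $\nabla_s\vec{\phi}=\partial_s\vec{\phi}-\langle\partial_s\vec{\phi},\tau\rangle\tau$ for any vector field $\vec{\phi}$. Applying this to $\vec{\kappa}$ and using that differentiating $\langle\vec{\kappa},\tau\rangle=0$ together with $\partial_s\tau=\vec{\kappa}$ gives $\langle\partial_s\vec{\kappa},\tau\rangle=-|\vec{\kappa}|^2$, we obtain $\partial_s\vec{\kappa}=\nabla_s\vec{\kappa}-|\vec{\kappa}|^2\tau$. The very same argument, carried out for an arbitrary \emph{normal} field $\vec{\psi}$ along $f$, yields the ``basic rule'' $\partial_s\vec{\psi}=\nabla_s\vec{\psi}-\langle\vec{\psi},\vec{\kappa}\rangle\tau$; since $\nabla_s$ maps normal fields to normal fields, $\nabla_s^r\vec{\kappa}$ is normal and the basic rule applies to it for every $r\geq 0$. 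I would isolate this rule at the outset, since it drives the entire computation.

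Next I would check the base case $m=2$ by hand: applying $\partial_s$ to $\partial_s\vec{\kappa}=\nabla_s\vec{\kappa}-|\vec{\kappa}|^2\tau$, using the basic rule on $\nabla_s\vec{\kappa}$, $\partial_s|\vec{\kappa}|^2=2\langle\nabla_s\vec{\kappa},\vec{\kappa}\rangle$ (the $\tau$-component drops out inside the inner product) and $\partial_s\tau=\vec{\kappa}$, one gets $\partial_s^2\vec{\kappa}=\nabla_s^2\vec{\kappa}-3\langle\nabla_s\vec{\kappa},\vec{\kappa}\rangle\tau-|\vec{\kappa}|^2\vec{\kappa}$, which has exactly the asserted shape with tangential part $\tau P^{1,1}_2(\vec{\kappa})$ and vectorial part $P^{0,0}_3(\vec{\kappa})$.

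For the inductive step, assuming the formula holds for $m$, I would apply $\partial_s$ and use three elementary rules, each an immediate consequence of Leibniz together with the basic rule (noting again that inside any inner product the $\tau$-component produced by the basic rule vanishes): (a) $\partial_s\nabla_s^m\vec{\kappa}=\nabla_s^{m+1}\vec{\kappa}+\tau P^{m,m}_2(\vec{\kappa})$; (b) for $b$ even, $\partial_s P^{a,c}_b(\vec{\kappa})=P^{a+1,c+1}_b(\vec{\kappa})$, hence $\partial_s\!\big(\tau P^{a,c}_b(\vec{\kappa})\big)=\tau P^{a+1,c+1}_b(\vec{\kappa})+P^{a,c}_{b+1}(\vec{\kappa})$, the last term being $\vec{\kappa}\,P^{a,c}_b(\vec{\kappa})$; (c) for $b$ odd, $\partial_s P^{a,c}_b(\vec{\kappa})=P^{a+1,c+1}_b(\vec{\kappa})+\tau P^{a,c}_{b+1}(\vec{\kappa})$. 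Feeding the induction hypothesis through (a)--(c) and sorting the output into the $\tau$-prefixed terms (with $b$ even) and the genuinely vectorial terms (with $b$ odd) produces the claimed decomposition at level $m+1$.

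The only real work is the combinatorial bookkeeping: one must verify that the index ranges close up, i.e. that every term produced lands in $\sum_{[[a,b]]\le[[m,2]],\,c\le m,\,b\ \text{even}}P^{a,c}_b(\vec{\kappa})$ for the $\tau$-part and in $\sum_{[[a,b]]\le[[m-1,3]],\,c\le m-1,\,b\ \text{odd}}P^{a,c}_b(\vec{\kappa})$ for the vectorial part. Using that the summation convention \eqref{eq:sum_P^a_b} allows $b\le 2A+B-2a$, this reduces to two observations: raising $a$ by one with $b$ fixed, and keeping $a$ fixed while raising $b$ by one, are each compatible with the updated bounds $[[m,2]]$ and $[[m-1,3]]$; and the fresh seed $\tau P^{m,m}_2(\vec{\kappa})$ from rule (a) sits precisely at the admissible extreme $a=m$, $b=2$, $c=m$. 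This index-chasing is the main (though routine) obstacle; everything else is the basic rule plus the Leibniz rule.
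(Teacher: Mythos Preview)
Your proof is correct and follows exactly the approach the paper sketches: the first identity is obtained directly from the definition of $\nabla_s$ together with $\langle\partial_s\vec{\kappa},\tau\rangle=-|\vec{\kappa}|^2$, and the second by induction. The paper's proof says only ``the second claim follows by induction''; your rules (a)--(c) and the index bookkeeping are precisely the missing details, and they all check out.
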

\begin{proof} The proof can be found for instance in \cite[Lemma~ 4.5]{DP}.
The first claim is obtained directly using that 
$$\partial_s \vec{\kappa}= \nabla_s \vec{\kappa} + \langle \partial_s \vec{\kappa}, \tau \rangle \tau  = \nabla_s \vec{\kappa} - |\vec{\kappa}|^2 \tau \, .$$ 
The second claim follows by induction.
\end{proof}

\begin{lemma}\label{evolcurvature}
Suppose $\partial_t f= -\nabla_s^2 \vec{\kappa} + \lambda \vec{\kappa}- \frac12 |\vec{\kappa}|^2 \vec{\kappa}+ \varphi \tau$, where $\lambda =\lambda(t)$. Then for $m  \in \mathbb{N}_0$ we have
\begin{align*} \nabla_t \nabla_s^m \vec{\kappa}+ \nabla_s^4 \nabla_s^m \vec{\kappa}&= P^{m+2,m+2}_3 (\vec{\kappa})+ \lambda (\nabla_s^{m+2} \vec{\kappa} + P^{m,m}_3 (\vec{\kappa})) +P^{m,m}_5 (\vec{\kappa}) + \varphi \nabla_s^{m+1} \vec{\kappa}.
\end{align*}
If $\lambda$ is a given fixed constant then we  simply write
\begin{align*}
\nabla_t \nabla_s^m \vec{\kappa}+ \nabla_s^4 \nabla_s^m \vec{\kappa}
& =  \varphi \nabla_s^{m+1} \vec{\kappa}  +\sum_{\substack{[[a,b]] \leq [m+2,3]]\\c\leq m+2, \\ b \,\, odd}}  P^{a,c}_{b} (\vec{\kappa})  \, .
\end{align*}
\end{lemma}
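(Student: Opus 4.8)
The plan is to compute the evolution equation for $\nabla_s^m \vec{\kappa}$ by combining the known evolution equation for $\vec{\kappa}$ (the case $m=0$, which follows from Lemma~\ref{lemform}, in particular \eqref{e}) with the commutation formula \eqref{f} for $\nabla_t$ and $\nabla_s$, and then keeping careful track of how the $P$-terms accumulate derivatives. First I would establish the case $m=0$: insert $\vec{V} = -\nabla_s^2 \vec{\kappa} - \frac12 |\vec{\kappa}|^2 \vec{\kappa} + \lambda \vec{\kappa}$ into \eqref{e}, so that $\nabla_t \vec{\kappa} = \nabla_s^2 \vec{V} + \langle \vec{\kappa}, \vec{V} \rangle \vec{\kappa} + \varphi \nabla_s \vec{\kappa}$. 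The leading term $\nabla_s^2 \vec{V}$ produces $-\nabla_s^4 \vec{\kappa}$ (which moves to the left-hand side), plus $\nabla_s^2(-\frac12 |\vec{\kappa}|^2 \vec{\kappa})$, which is a linear combination of terms with $3$ curvature factors and $2$ derivatives, hence $P_3^{2,2}(\vec{\kappa})$; the term $\lambda \nabla_s^2 \vec{\kappa}$ gives the $\lambda \nabla_s^{m+2}\vec{\kappa}$ contribution with $m=0$; the term $\langle \vec{\kappa}, \vec{V}\rangle \vec{\kappa}$ contributes $P_3^{2,2}(\vec{\kappa}) + \lambda P_3^{0,0}(\vec{\kappa}) + P_5^{0,0}(\vec{\kappa})$; and $\varphi \nabla_s \vec{\kappa}$ is the stated $\varphi \nabla_s^{m+1}\vec{\kappa}$ term. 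This gives the first displayed identity for $m=0$.

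Next I would proceed by induction on $m$. Assuming the formula for $m$, apply $\nabla_s$ to both sides and use the commutator \eqref{f} with $\vec{\phi} = \nabla_s^m \vec{\kappa}$ to rewrite $\nabla_s \nabla_t \nabla_s^m \vec{\kappa}$ as $\nabla_t \nabla_s^{m+1}\vec{\kappa}$ plus lower-order correction terms. The commutator \eqref{f} produces terms of the form $(\langle \vec{\kappa}, \vec{V}\rangle - \partial_s \varphi)\nabla_s^{m+1}\vec{\kappa}$ and $\langle \vec{\kappa}, \nabla_s^m\vec{\kappa}\rangle \nabla_s \vec{V} - \langle \nabla_s \vec{V}, \nabla_s^m \vec{\kappa}\rangle \vec{\kappa}$; substituting the explicit $\vec{V}$ and expanding $\nabla_s^3 \vec{\kappa}$, $\langle \vec{\kappa}, \vec{V}\rangle$, etc., each of these falls into one of the $P$-type classes with the correct count of derivatives and factors (one must check that $\partial_s \varphi \cdot \nabla_s^{m+1}\vec{\kappa}$ is, in the end, absorbed — here one uses that $\partial_s \varphi$ is itself controlled by $\langle \vec{\kappa}, \vec{V}\rangle$-type quantities via \eqref{a}, or more simply one keeps a $\varphi$-term and a $\partial_s\varphi$-term separate; in the displayed statement the $\varphi$-dependence is only tracked through the single term $\varphi \nabla_s^{m+1}\vec{\kappa}$, so I would check that the remaining $\varphi$- and $\partial_s\varphi$-contributions are either of that form after relabelling or are reabsorbed into $P$-terms using \eqref{a}). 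Differentiating the right-hand side of the inductive hypothesis is then routine: $\nabla_s P_3^{m+2,m+2}(\vec{\kappa}) = P_3^{m+3,m+3}(\vec{\kappa})$, $\nabla_s(\lambda \nabla_s^{m+2}\vec{\kappa}) = \lambda \nabla_s^{m+3}\vec{\kappa}$, $\nabla_s P_3^{m,m}(\vec{\kappa}) = P_3^{m+1,m+1}(\vec{\kappa}) \subseteq$ the allowed class, $\nabla_s P_5^{m,m}(\vec{\kappa}) = P_5^{m+1,m+1}(\vec{\kappa})$, and $\nabla_s(\varphi \nabla_s^{m+1}\vec{\kappa}) = (\partial_s\varphi)\nabla_s^{m+1}\vec{\kappa} + \varphi \nabla_s^{m+2}\vec{\kappa}$. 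Collecting everything and using the monotonicity of the $P$-classes ($P_b^{a,c} \subseteq P_b^{a',c'}$ for $a \le a'$, $c \le c'$ in the summed sense, and lower-$a$ terms absorbed into higher ones as in \eqref{eq:sum_P^a_b}) yields the formula for $m+1$.

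Finally, for the last assertion (the case of fixed constant $\lambda$), I would simply observe that when $\lambda$ is a fixed constant, the term $\lambda(\nabla_s^{m+2}\vec{\kappa} + P_3^{m,m}(\vec{\kappa}))$ can be folded into the sum $\sum_{[[a,b]]\le[[m+2,3]], c\le m+2, b\ \mathrm{odd}} P_b^{a,c}(\vec{\kappa})$ — here one uses that $\nabla_s^{m+2}\vec{\kappa} = P_1^{m+2,m+2}(\vec{\kappa})$ is exactly a $b=1$ (odd), $a=m+2$, $c=m+2$ term, and that $P_3^{m,m}$, $P_3^{m+2,m+2}$, $P_5^{m,m}$ all have odd $b$ and order $\le (m+2) + 3/2$ matching the range in \eqref{eq:sum_P^a_b} with $A = m+2$, $B=3$. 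The $\varphi \nabla_s^{m+1}\vec{\kappa}$ term is kept explicit as in the statement. The main obstacle I anticipate is the bookkeeping of the tangential-component contributions: the commutator \eqref{f} repeatedly reintroduces $\partial_s \varphi$ factors, and one must argue carefully (via \eqref{a}, i.e. $\partial_s\varphi = \partial_t(ds)/ds + \langle\vec{\kappa},\vec{V}\rangle$, and the explicit form of $\vec{V}$) why in the final tally only the single clean term $\varphi \nabla_s^{m+1}\vec{\kappa}$ survives and everything else collapses into the curvature $P$-sum — this is precisely the "algebra for the maps $\varphi_i$" the introduction warns is delicate. Everything else is a structured but mechanical induction.
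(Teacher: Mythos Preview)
Your overall strategy---base case $m=0$ from \eqref{e}, then induction on $m$ via the commutator \eqref{f}---is exactly the paper's approach. The base case and the bookkeeping of the $P$-terms under $\nabla_s$ are fine.

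However, your treatment of the $\partial_s\varphi$ contributions is off, and the fix you propose would not work. You worry that the commutator \eqref{f} ``repeatedly reintroduces $\partial_s\varphi$ factors'' and suggest absorbing them using \eqref{a}, i.e.\ writing $\partial_s\varphi = \partial_t(ds)/ds + \langle\vec{\kappa},\vec{V}\rangle$. That route is a dead end: the term $\partial_t(ds)/ds$ is not a curvature polynomial and cannot be placed in any $P_b^{a,c}(\vec{\kappa})$ class. The actual mechanism is much simpler: the $\partial_s\varphi$ terms \emph{cancel exactly}. In the induction step you apply $\nabla_s$ to the right-hand side of the inductive hypothesis, producing $(\partial_s\varphi)\nabla_s^{m+1}\vec{\kappa} + \varphi\,\nabla_s^{m+2}\vec{\kappa}$ from the term $\varphi\,\nabla_s^{m+1}\vec{\kappa}$; meanwhile the commutator \eqref{f} (with $\vec{\phi}=\nabla_s^{m}\vec{\kappa}$) contributes $-(\partial_s\varphi)\nabla_s^{m+1}\vec{\kappa}$. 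These two cancel, leaving only $\varphi\,\nabla_s^{m+2}\vec{\kappa}$ plus curvature terms. The paper's $m=1$ computation displays this cancellation explicitly, and the remark immediately following the lemma (``Notice that no derivatives of the tangential component $\varphi$ appear'') is precisely this observation. Once you see the cancellation, the induction is routine and no appeal to \eqref{a} is needed.
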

\begin{proof}
See \cite[Lemma~2.3]{DKS} and \cite[Lemma~2.3]{DLP} in the case that there is no tangential component.
For $m=0$ the claim follows directly from \eqref{e}. For $m=1$ we find using \eqref{f} and \eqref{e}
\allowdisplaybreaks{\begin{align*}
\nabla_t \nabla_s \vec{\kappa}+ \nabla_s^5 \vec{\kappa} & = \nabla_s \nabla_t  \vec{\kappa} + (\langle \vec{\kappa},\vec{V} \rangle -\partial_{s }\varphi) \nabla_{s}\vec{\kappa} +[\langle \vec{\kappa}, \vec{\kappa}\rangle \nabla_{s}\vec{V} -\langle \nabla_{s} \vec{V}, \vec{\kappa} \rangle \vec{\kappa}] + \nabla_s^5 \vec{\kappa}\\
& = \nabla_{s}^{3} \vec{V} + \langle \nabla_s\vec{\kappa}, \vec{V}\rangle  \vec{\kappa}  + \varphi \nabla_{s}^2 \vec{\kappa} + 2\langle \vec{\kappa}, \vec{V}\rangle \nabla_s \vec{\kappa}  +\langle \vec{\kappa}, \vec{\kappa}\rangle \nabla_{s}\vec{V} + \nabla_s^5 \vec{\kappa} \\
& = P^{3,3}_3 (\kappa)+ \lambda (\nabla_s^{3} \vec{\kappa} + P^{1,1}_3 (\vec{\kappa})) +P^{1,1}_5 (\vec{\kappa}) + \varphi \nabla_s^{2} \vec{\kappa} \,.
\end{align*}}
The general statement follows with an induction argument.
\end{proof}

Notice that no derivatives of the tangential component $\varphi$ appear.
In the following lemma we collect further important formulae. 

\begin{lemma}\label{lemLin}
Suppose $f: [0,T) \times \bar{I} \rightarrow \mathbb{R}^n$ is a smooth regular solution to
\begin{equation*}
\partial_{t} f= -\nabla_s^2 \vec{\kappa} -\frac12 |\vec{\kappa}|^2 \vec{\kappa} + \lambda \vec{\kappa}  + \varphi \partial_{s} f= \vec{V} + \varphi \partial_{s} f\,  
\end{equation*}
in $(0,T) \times I$. Here $\lambda \in \R$ is a given fixed constant. Then, the following formulae hold on $(0,T) \times I$. 
\begin{enumerate}
\item  For $\mu,d\in \mathbb{N}_0$, $\nu \in  \mathbb{N}$, $\nu $ odd we have
\begin{align} \label{E2sumsimple}
\nabla_{t} P^{\mu,d}_{\nu} (\vec{\kappa}) &  = \sum_{\substack{[[a,b]] \leq [[\mu+4,\nu]]\\c\leq 4+d\\b \in [\nu,\nu+4], odd }} P^{a,c}_{b} (\vec{\kappa}) 
 +\varphi\, P_{\nu}^{\mu+1, d+1} (\vec{\kappa})   \, .
\end{align}
\item For any $A, C \in \mathbb{N}_0$, $B, N, M \in \mathbb{N}$, $B$ odd,
\begin{align}\label{E2sum}
\nabla_{t} \sum_{\substack{[[a,b]]\leq [[A,B]]\\c\leq C\\b\in [N,M], odd}}P^{a,c}_{b} (\vec{\kappa}) &= \sum_{\substack{[[a,b]] \leq [[A +4,B]]\\c\leq C+4\\b \in [N,M+4], odd}} P^{a,c}_{b} (\vec{\kappa}) 
+  \varphi \sum_{\substack{[[a,b]] \leq [[A+1,B]]\\c\leq C+1\\b\in[N,M],odd}}  P_{b}^{a, c} (\vec{\kappa})  \,  .
\end{align}
\item  For $\mu,d\in \mathbb{N}_0$, $\nu \in  \mathbb{N}$, $\nu $ odd we have
\begin{align} \label{E3odd}
\partial_{t} P^{\mu,d}_{\nu} (\vec{\kappa}) &  = \sum_{\substack{[[a,b]] \leq [[\mu+4,\nu]]\\c\leq 4+d\\b \in [\nu,\nu+4], odd }} P^{a,c}_{b} (\vec{\kappa}) 
+\varphi\, P_{\nu}^{\mu+1, d+1} (\vec{\kappa})  \\
& \qquad 
+  (\partial_{s}f) \Big (\varphi \langle P^{\mu,d}_{\nu}(\vec{\kappa}), \vec{\kappa} \rangle +  \sum_{\substack{[[a,b]] \leq [[\mu+3,\nu+1]]\\c\leq \max \{ d, 3 \}\\b \in [\nu+1, \nu+3]\, \, even }} P^{a,c}_{b} (\vec{\kappa})  \,\Big)\, ,\nonumber
\end{align}
whereas for $\nu$ even we have
\begin{align} \label{E3even}
\partial_{t} P^{\mu,d}_{\nu} (\vec{\kappa}) &  = \sum_{\substack{[[a,b]] \leq [[\mu+4,\nu]]\\c\leq 4+d\\b \in [\nu,\nu+4], even }} P^{a,c}_{b} (\vec{\kappa}) 
+\varphi\, P_{\nu}^{\mu+1,  d+1 } (\vec{\kappa})  \,.
\end{align}


\item  For any $A, C \in \mathbb{N}_0$, $B, N, M \in \mathbb{N}$, $B$ odd, 
\begin{align} \label{E4sum-odd}
\partial_{t} &\sum_{\substack{[[a,b]]\leq [[A,B]]\\c\leq C\\b\in [N,M], odd}}P^{a,c}_{b} (\vec{\kappa}) = 
\sum_{\substack{[[a,b]] \leq [[A+4, B]]\\c\leq C+4\\b \in [N,M+4], odd }} P^{a,c}_{b} (\vec{\kappa}) 
+\varphi\, \sum_{\substack{[[a,b]] \leq [[A+1, B]]\\c\leq C+1\\b \in [N,M], odd }} P^{a,c}_{b} (\vec{\kappa})   \\
& \qquad 
+  (\partial_{s}f) \Big (\varphi \sum_{\substack{[[a,b]] \leq [[A, B+1]]\\c\leq C\\b \in [N+1,M+1], even}} P^{a,c}_{b} (\vec{\kappa}) +  \sum_{\substack{[[a,b]] \leq [[A+3,B+1]]\\c\leq \max \{ C, 3 \}\\b \in [N+1, M+3]\, \, even }} P^{a,c}_{b} (\vec{\kappa})  \,\Big)\, , \nonumber
\end{align} 
(Note that the term multiplying $\varphi$ in the tangential component vanishes if we know that $\vec{\kappa}=0$. This fact will be used repeatedly in some computations.)
whereas for $B$ even we have
\begin{align} \label{E4sum-even}
\partial_{t} \sum_{\substack{[[a,b]]\leq [[A,B]]\\c\leq C\\b\in [N,M], even}}P^{a,c}_{b} (\vec{\kappa}) &= 
\sum_{\substack{[[a,b]] \leq [[A+4, B]]\\c\leq C+4\\b \in [N,M+4], even }} P^{a,c}_{b} (\vec{\kappa}) 
+\varphi\, \sum_{\substack{[[a,b]] \leq [[A+1, B]]\\c\leq C+1\\b \in [N,M], even}} P^{a,c}_{b} (\vec{\kappa})  \, .
\end{align}

\end{enumerate}
\end{lemma}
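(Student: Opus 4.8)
\textbf{Proof strategy for Lemma~\ref{lemLin}.}

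The plan is to establish all the formulas by reducing everything to two basic ingredients: the commutator formula \eqref{f} for $\nabla_t \nabla_s - \nabla_s \nabla_t$ acting on normal fields, and the structural identity \eqref{e} for $\nabla_t \vec{\kappa}$ (equivalently the second part of Lemma~\ref{evolcurvature}, which already packages $\nabla_t \nabla_s^m \vec{\kappa} + \nabla_s^4 \nabla_s^m \vec{\kappa}$ into $P$-notation with the tangential term $\varphi \nabla_s^{m+1}\vec{\kappa}$). The recurring principle is bookkeeping: when $\nabla_t$ hits a product of $\nu$ factors of the form $\nabla_s^{i_1}\vec{\kappa} * \cdots * \nabla_s^{i_\nu}\vec{\kappa}$, it lands on each factor in turn via the product rule \eqref{cs3}; on a single factor $\nabla_s^{i}\vec{\kappa}$ one commutes $\nabla_t$ past the $i$ copies of $\nabla_s$ using \eqref{f}, producing at each commutation step a term of the schematic type ``$(\langle\vec{\kappa},\vec{V}\rangle - \partial_s\varphi)\cdot(\text{lower})$'' plus ``$\vec{\kappa}$-curvature-coupling'' terms, and is finally absorbed into $\nabla_t \vec{\kappa}$ via \eqref{e}/Lemma~\ref{evolcurvature}, which trades one $\nabla_t$ for four $\nabla_s$'s (hence the ``$+4$'' in $a$ and $c$) while raising the number of factors by up to four (hence ``$b\in[\nu,\nu+4]$''). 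The key observation — already flagged after Lemma~\ref{evolcurvature} — is that $\vec{V} = -\nabla_s^2\vec{\kappa} - \tfrac12|\vec{\kappa}|^2\vec{\kappa} + \lambda\vec{\kappa}$ contributes only $\vec{\kappa}$-terms with total order matching a fourth-order parabolic scaling, and that $\partial_s\varphi$ never actually survives: in \eqref{f} the quantity $\langle\vec{\kappa},\vec{V}\rangle - \partial_s\varphi$ appears, but when one tracks how it enters $\nabla_t$ of a normal $P$-term, the $\partial_s\varphi$ pieces either cancel or recombine into an undifferentiated $\varphi$ multiplying a normal $P$-term — this is precisely the ``algebra of $\varphi$'' alluded to in Remark~\ref{rem:varphi}, and it is the point requiring genuine care rather than routine expansion.

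I would organize the proof as follows. First prove \eqref{E2sumsimple} for a single $P^{\mu,d}_\nu(\vec{\kappa})$ with $\nu$ odd: apply $\nabla_t$ by the Leibniz rule across the $\nu$ factors, on each factor commute $\nabla_t$ to the left using \eqref{f} repeatedly (an inner induction on the number of $\nabla_s$'s), substitute \eqref{e}, and then carefully collect terms into the stated double/triple sum, verifying (a) the total-derivative count goes from $\mu$ to at most $\mu+4$, (b) the max-on-one-factor count goes from $d$ to at most $d+4$, (c) the number of factors stays in $[\nu,\nu+4]$ and odd, and (d) every tangential contribution is of the form $\varphi\, P^{\mu+1,d+1}_\nu(\vec{\kappa})$ with no derivative on $\varphi$. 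Second, \eqref{E2sum} follows immediately by linearity, summing \eqref{E2sumsimple} over the index ranges $[[a,b]]\le[[A,B]]$, $c\le C$ and reading off the resulting envelope of indices. Third, for \eqref{E3odd} and \eqref{E3even} I would write $\partial_t P^{\mu,d}_\nu(\vec{\kappa}) = \nabla_t P^{\mu,d}_\nu(\vec{\kappa}) + \langle\partial_t P^{\mu,d}_\nu(\vec{\kappa}),\tau\rangle\tau$, use \eqref{E2sumsimple} for the normal part, and compute the tangential part using the product structure together with \eqref{c} (so $\partial_t\tau = \nabla_s\vec{V} + \varphi\vec{\kappa}$) and the fact that a normal $P$-term paired with $\partial_t\tau$ contributes $\varphi\langle P^{\mu,d}_\nu(\vec{\kappa}),\vec{\kappa}\rangle$ plus $\langle P^{\mu,d}_\nu(\vec{\kappa}),\nabla_s\vec{V}\rangle$, the latter giving the stated even-factor sum of order $\mu+3$; for $\nu$ even there is no nonzero scalar projection onto $\tau$ at leading structural level in the relevant sense, leaving only the normal part — hence the simpler \eqref{E3even}. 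Finally \eqref{E4sum-odd} and \eqref{E4sum-even} follow from \eqref{E3odd}, \eqref{E3even} by the same summation-over-index-ranges argument as in step two.

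The main obstacle, and the place where I expect to spend real effort, is the careful verification in the first step that no spatial derivative of $\varphi$ ever appears in the final formulas and that the index envelopes are exactly as claimed — in particular that the ``$+4$'' in the derivative count and the ``$b\in[\nu,\nu+4]$'' in the factor count are simultaneously sharp and consistent when $\nabla_t$ is resolved via the fourth-order substitution from Lemma~\ref{evolcurvature}. The commutator \eqref{f} superficially introduces $\partial_s\varphi$ (through the term $(\langle\vec{\kappa},\vec{V}\rangle - \partial_s\varphi)\nabla_s\vec{\phi}$), and one must check that across all the nested applications of \eqref{f} — and after combining with the $\varphi\nabla_s^{m+1}\vec{\kappa}$ term coming from \eqref{e}/Lemma~\ref{evolcurvature} — these $\partial_s\varphi$ contributions reorganize into undifferentiated $\varphi$ times normal $P$-terms. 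This is exactly the ``algebra for the maps $\varphi_i$'' that the introduction singles out as new and nontrivial; once it is pinned down for the atomic case \eqref{E2sumsimple}, the remaining statements are bookkeeping over index ranges and projection onto the tangent, which is tedious but mechanical.
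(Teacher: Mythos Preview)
Your overall plan is correct and matches the paper's approach for parts 2, 3, and 4: \eqref{E2sum} is obtained by summing \eqref{E2sumsimple} over the index ranges; \eqref{E3odd} is derived by writing $\partial_t P^{\mu,d}_\nu = \nabla_t P^{\mu,d}_\nu + (\partial_s f)\langle P^{\mu,d}_\nu,\partial_t\tau\rangle$ and invoking \eqref{c} for $\partial_t\tau$; for $\nu$ even $P^{\mu,d}_\nu$ is scalar so $\partial_t$ acts by the ordinary product rule and \eqref{E3even} follows; and \eqref{E4sum-odd}, \eqref{E4sum-even} again follow by summation.

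However, you overcomplicate step 1 and misidentify the main obstacle. The paper does \emph{not} commute $\nabla_t$ past the $\nabla_s$'s via repeated application of \eqref{f}. Instead, after applying the Leibniz rule across the $\nu$ factors, it applies Lemma~\ref{evolcurvature} \emph{directly} to each factor $\nabla_s^{i_j}\vec{\kappa}$, which already yields
\[
\nabla_t \nabla_s^{i_j}\vec{\kappa} = \varphi\,\nabla_s^{i_j+1}\vec{\kappa} + \sum_{\substack{[[a,b]]\le[[i_j+4,1]]\\ c\le i_j+4,\ b\ \text{odd}}} P^{a,c}_b(\vec{\kappa})
\]
with \emph{no} $\partial_s\varphi$ present. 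The cancellation of $\partial_s\varphi$ you describe is real, but it has already been carried out once and for all in the proof of Lemma~\ref{evolcurvature}; there is no need to redo it here. With that black box the bookkeeping for \eqref{E2sumsimple} is immediate: one $\nabla_t$ on one factor raises total order by at most $4$, raises the max-order by at most $4$, adds $0$, $2$, or $4$ factors, and contributes the single tangential term $\varphi\,P^{\mu+1,d+1}_\nu(\vec{\kappa})$.

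Relatedly, your last paragraph misreads Remark~\ref{rem:varphi}. The ``algebra of $\varphi$'' highlighted there is \emph{not} about $\partial_s\varphi$ cancelling; it is about how the \emph{time} derivatives $\partial_t^l\varphi_i$ carry weight $3+4l$ (see the definition of $|\beta|$ in \eqref{DefBeta}). That structure enters later, in Lemma~\ref{lemtriple} and Section~\ref{sec:5}, not in Lemma~\ref{lemLin}.
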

\begin{proof}The proof  is obtained by a straight forward generalisation of \cite[Lemma~3.1]{DP}. We report all details for the sake of the reader in Appendix \ref{AppBLem3.6}.
\end{proof}
Finally we give some estimates that will be used repeatedly for boundary terms.
\begin{lemma}\label{lem:trickbdry}
We have that for any $x \in [0,1]$ there holds
\begin{align}\label{trickboundary}
| P^{a,c}_{b}(\vec{\kappa})(x)|^{2} \leq C ( \int_{I} |P^{2a+1,c+1}_{2b}(\vec{\kappa})| + |P^{2a,c}_{2b} (\vec{\kappa})| ds), \quad \text{ if $b$ is odd},\\ \label{trickboundary2}
| P^{a,c}_{b}(\vec{\kappa})(x)| \leq C ( \int_{I} |P^{a+1,c+1}_{b}(\vec{\kappa})| + |P^{a,c}_{b} (\vec{\kappa})| ds), \quad \text{ if $b$ is even},
\end{align}
where $C=C(\frac{1}{\mathcal{L}(f)})$.
\end{lemma}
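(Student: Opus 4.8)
The plan is to prove both estimates by reducing a pointwise value of the curvature-polynomial to an integral over $I$ via the fundamental theorem of calculus, together with a mean-value argument to locate a good point where the integrand is controlled by the $L^1$-norm. First I would handle \eqref{trickboundary2}, the case $b$ even, since it is cleaner. Fix $x_0 \in [0,1]$. By the mean value theorem for integrals there exists $\bar x \in I$ with $|P^{a,c}_b(\vec{\kappa})(\bar x)| \le \frac{1}{\mathcal{L}(f)}\int_I |P^{a,c}_b(\vec{\kappa})|\,ds$ (this is where the factor $\frac{1}{\mathcal{L}(f)}$, i.e. the dependence $C = C(\tfrac{1}{\mathcal{L}(f)})$, enters). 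Then write
\[
|P^{a,c}_b(\vec{\kappa})(x_0)| \le |P^{a,c}_b(\vec{\kappa})(\bar x)| + \Bigl| \int_{\bar x}^{x_0} \partial_x |P^{a,c}_b(\vec{\kappa})|\,dx \Bigr|,
\]
and estimate $\partial_x |P^{a,c}_b(\vec{\kappa})| \le |\partial_s P^{a,c}_b(\vec{\kappa})|\,|\partial_x f|$. Since $b$ is even, $\partial_s$ (or more precisely $\nabla_s$ applied factorwise, with the extra tangential terms of the form $\langle \partial_s \nabla_s^{i_j}\vec{\kappa}, \tau\rangle$ which by $\partial_s \vec{\kappa} = \nabla_s\vec{\kappa} - |\vec{\kappa}|^2\tau$ produce only more factors of curvature and its derivatives) turns a term of type $|P^{a,c}_b(\vec{\kappa})|$ into a sum of terms of type $|P^{a+1,c+1}_b(\vec{\kappa})|$; the differential $|\partial_x f|\,dx = ds$ converts the $dx$-integral into a $ds$-integral over $I$. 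Collecting, $|P^{a,c}_b(\vec{\kappa})(x_0)|$ is bounded by $C(\tfrac{1}{\mathcal{L}(f)})$ times $\int_I |P^{a+1,c+1}_b(\vec{\kappa})| + |P^{a,c}_b(\vec{\kappa})|\,ds$, which is exactly \eqref{trickboundary2}.

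For \eqref{trickboundary}, the case $b$ odd, the natural move is to apply the same argument to the squared quantity $|P^{a,c}_b(\vec{\kappa})|^2$, which is a scalar of type (schematically) $P^{2a,c}_{2b}(\vec{\kappa})$ — an even-factor-number polynomial — so that the constant does not blow up from differentiating an odd product. Concretely, with $\bar x$ now chosen by the mean value theorem so that $|P^{a,c}_b(\vec{\kappa})(\bar x)|^2 \le \tfrac{1}{\mathcal{L}(f)}\int_I |P^{2a,c}_{2b}(\vec{\kappa})|\,ds$, one writes
\[
|P^{a,c}_b(\vec{\kappa})(x_0)|^2 \le |P^{a,c}_b(\vec{\kappa})(\bar x)|^2 + \Bigl|\int_{\bar x}^{x_0} \partial_x |P^{a,c}_b(\vec{\kappa})|^2 \, dx\Bigr|,
\]
and differentiating the square produces, via the product/chain rule, terms with $2b$ curvature factors and one extra derivative, i.e. of type $|P^{2a+1,c+1}_{2b}(\vec{\kappa})|$, again multiplied by $|\partial_x f|$ to give $ds$. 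This yields precisely the right-hand side of \eqref{trickboundary}. I would note that working with the square (rather than directly with $|P^{a,c}_b(\vec{\kappa})|$ when $b$ is odd) is what keeps the indices consistent: differentiating an odd product of normal fields the naive way would not obviously land back in the $P$-notation with the stated bounds on the number of factors.

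The main obstacle — really the only subtlety — is bookkeeping: verifying that $\partial_x$ (equivalently $\nabla_s$ up to lower-order curvature corrections coming from $\partial_s\tau = \vec{\kappa}$ and $\partial_s\vec{\kappa} = \nabla_s\vec{\kappa} - |\vec{\kappa}|^2\tau$) of a term of type $P^{a,c}_b$ or $|P^{a,c}_b|^2$ stays within the claimed classes $P^{a+1,c+1}_b$ (resp. $P^{2a+1,c+1}_{2b}$), i.e. that no term with too many derivatives on a single factor or with the wrong parity of the number of factors appears. This is the same algebra already used in Lemma~\ref{Qlemma} and in the $*$-product calculus recalled above; a clean way to present it is to observe $\partial_s\langle\vec{\phi}_1,\vec{\phi}_2\rangle = \langle\nabla_s\vec{\phi}_1,\vec{\phi}_2\rangle + \langle\vec{\phi}_1,\nabla_s\vec{\phi}_2\rangle$ and $\nabla_s \nabla_s^{i_j}\vec{\kappa} = \nabla_s^{i_j+1}\vec{\kappa}$, so the derivative simply shifts one index $i_j \mapsto i_j+1$ in each summand, raising $a$ by $1$ and $c$ by at most $1$ while leaving $b$ fixed. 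Everything else is the elementary FTC-plus-mean-value argument described above, and the statement $C = C(\tfrac{1}{\mathcal{L}(f)})$ follows because the mean value step divides by $\mathcal{L}(f)$ exactly once.
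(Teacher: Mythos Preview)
Your proposal is correct and essentially the same as the paper's proof. The paper simply packages your mean value + fundamental theorem of calculus argument as the elementary one-dimensional embedding $\|\psi\|_{L^\infty(I)} \leq C\|\partial_s\psi\|_{L^1(I)} + \tfrac{C}{\mathcal{L}(f)}\|\psi\|_{L^1(I)}$ for scalar $\psi$, and then applies it with $\psi = P^{a,c}_b(\vec{\kappa})$ when $b$ is even and with $\psi = |P^{a,c}_b(\vec{\kappa})|^2$ when $b$ is odd, using $\partial_s|\phi|^2 = 2\langle\phi,\nabla_s\phi\rangle$ in the latter case.
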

The above lemma will be used in conjunction with interpolation estimates shown below: in particular \eqref{trickboundary} will be used when $b=1$.
\begin{proof}
Let $x \in [0,1]$.
We first start by showing out statement when $b$ is odd. To motivate our proof's strategy, observe that
 by embedding theory we know that for any normal vector field $\phi $ we have
\begin{align}
\| \phi \|_{L^{\infty}(I)} &\leq c(n) \| \partial_{s} \phi \|_{L^{1}(I)} + \frac{c(n)}{\mathcal{L}(f)}\|  \phi \|_{L^{1}(I)} \notag \\ \label{embed}
&\leq c(n) \| \nabla_{s} \phi \|_{L^{1}(I)} +  c(n)\| \langle \phi, \vec{\kappa} \rangle \|_{L^{1}(I)} + \frac{c(n)}{\mathcal{L}(f)}\|  \phi \|_{L^{1}(I)}. 
\end{align}
On the other hand, to apply interpolation inequalities later on, where the number of factors $b$ must be $b \geq 2$, it is better  (instead of applying Cauchy-Schwarz to the $L^{1}$-norms) to consider
\begin{align*}
\| \phi \|_{L^{\infty}(I)}^{2}= \| |\phi|^{2} \|_{L^{\infty}(I)} &\leq  C\| \partial_{s} |\phi|^{2} \|_{L^{1}(I)} + \frac{C}{\mathcal{L}(f)}\|  |\phi|^{2} \|_{L^{1}(I)} \\
&\leq C\int_{I} |\phi|(|\nabla_{s} \phi|  + |\phi| ) ds,
\end{align*}
using that $\partial_s |\phi|^2= 2 \langle \phi, \nabla_s \phi\rangle$. Then this gives \eqref{trickboundary}.
 When $b$ is even we use the inequality $\| \phi \|_{L^{\infty}(I)} \leq C \| \partial_{s} \phi \|_{L^{1}(I)} + \frac{C}{\mathcal{L}(f)}\|  \phi \|_{L^{1}(I)} $, which holds for any scalar map $\phi$. Choosing $\phi=P^{a,c}_{b} (\vec{\kappa}) (x)$ we obtain \eqref{trickboundary2}.
\end{proof}

\subsection{Interpolation inequalities}

Interpolation inequalities are crucial in the proof of long-time existence. Consider the scale invariant norms for $k \in \mathbb{N}_{0}$ and $p \in [1, \infty)$
\begin{equation*}
\| \vec{\kappa}\|_{k,p} := \sum_{i=0}^{k} \| \nabla_s^{i} \vec{\kappa}\|_{p} \quad \mbox{ with } \quad \| \nabla_s^{i} \vec{\kappa} \|_{p} := \mathcal{L}[f]^{i+1-1/p} \Big( \int_I |\nabla_s^{i} \vec{\kappa}|^p \, ds \Big)^{1/p} \, ,
\end{equation*}
(as in \cite{DKS}) and the usual $L^p$- norm $\| \nabla_s^{i} \vec{\kappa} \|_{L^p}^p :=  \int_I |\nabla_s^{i} \vec{\kappa}|^p \, ds $.  

Most of the following results (that we briefly state without proof) can be found in several papers (e.g. \cite{DKS,Lin,DP}). We give the precise reference to the paper where a complete proof can be found. These inequalities are satisfied by closed and open curves and allow also for the boundary points of the curve to move in time. One needs only a control from below on the length of the curve.

\begin{lemma}[Lemma~4.1 \cite{DP}]\label{leminter}
Let $f: I \rightarrow \mathbb{R}^n$ be a smooth regular curve. Then for all $k \in \mathbb{N}$, $p \geq 2$ and $0 \leq i<k$ we have
\begin{equation*}
\| \nabla_s^{i} \vec{\kappa}\|_{p} \leq C \|\vec{\kappa}\|_{2}^{1-\alpha} \|\vec{\kappa}\|_{k,2}^{\alpha} \, ,  
\end{equation*}
with $\alpha= (i+\frac12 -\frac{1}{p})/k$ and $C=C(n,k,p)$.
\end{lemma}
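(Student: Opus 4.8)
The plan is to normalize scale, reduce to the Hilbert-space exponent $p=2$, and then run the classical iterated interpolation argument in the covariant formalism. First, each norm $\|\nabla_s^{j}\vec{\kappa}\|_p$ is invariant under the dilation $f\mapsto rf$, so the asserted inequality is scale invariant and we may assume $\mathcal{L}[f]=1$; parametrizing $f$ by arclength identifies $I$ with $(0,1)$, turns $\partial_s$ into $d/ds$, and reduces the scale-invariant norms to ordinary $L^p$- and $L^2$-norms. Moreover, since $\|\vec{\kappa}\|_{2}\le\|\vec{\kappa}\|_{k,2}$, any additive remainder of the form $C\|\vec{\kappa}\|_{L^2}$ on the right-hand side is already dominated by $C\|\vec{\kappa}\|_{L^2}^{1-\alpha}\|\vec{\kappa}\|_{k,2}^{\alpha}$, so it suffices to prove the estimate up to such a remainder. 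This puts the problem in reach of the Gagliardo--Nirenberg machinery of \cite{DKS}.

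The core is the case $p=2$, where $\alpha=i/k$. Writing $\nabla_s^{i}\vec{\kappa}=\nabla_s(\nabla_s^{i-1}\vec{\kappa})$ and integrating by parts once,
\[
\int_I|\nabla_s^{i}\vec{\kappa}|^{2}\,ds=\big[\langle\nabla_s^{i-1}\vec{\kappa},\nabla_s^{i}\vec{\kappa}\rangle\big]_{\partial I}-\int_I\langle\nabla_s^{i-1}\vec{\kappa},\nabla_s^{i+1}\vec{\kappa}\rangle\,ds,
\]
whence, by Cauchy--Schwarz, $\|\nabla_s^{i}\vec{\kappa}\|_{L^2}^{2}\le\|\nabla_s^{i-1}\vec{\kappa}\|_{L^2}\|\nabla_s^{i+1}\vec{\kappa}\|_{L^2}$ up to a boundary term; the boundary value $\langle\nabla_s^{i-1}\vec{\kappa}(x),\nabla_s^{i}\vec{\kappa}(x)\rangle$ is controlled, via Lemma~\ref{lem:trickbdry} applied to this scalar product, by $C\int_I(|\nabla_s^{i}\vec{\kappa}|^{2}+|\nabla_s^{i-1}\vec{\kappa}|\,|\nabla_s^{i+1}\vec{\kappa}|+|\nabla_s^{i-1}\vec{\kappa}|\,|\nabla_s^{i}\vec{\kappa}|)\,ds$, i.e. by the same type of quantities, which are handled by Young's inequality and (through the iteration below) absorbed. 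Iterating the resulting near log-convexity of $i\mapsto\|\nabla_s^{i}\vec{\kappa}\|_{L^2}$ along the chain $0,1,\dots,k$ gives $\|\nabla_s^{i}\vec{\kappa}\|_{L^2}\le C\|\vec{\kappa}\|_{L^2}^{1-i/k}\|\nabla_s^{k}\vec{\kappa}\|_{L^2}^{i/k}$ for $0\le i\le k$. For $p>2$ one combines this with $\|\nabla_s^{i}\vec{\kappa}\|_{L^p}\le\|\nabla_s^{i}\vec{\kappa}\|_{L^2}^{1-\theta}\|\nabla_s^{i}\vec{\kappa}\|_{L^\infty}^{\theta}$ for $\theta=1-\tfrac2p$ together with the Agmon-type bound $\|w\|_{L^\infty}^{2}\le C\int_I|w|\,(|\nabla_s w|+|w|)\,ds$ (valid for any normal field $w$ since $\partial_s|w|^{2}=2\langle w,\nabla_s w\rangle$, with no boundary term), applied to $w=\nabla_s^{i}\vec{\kappa}$ and followed by one more use of the $p=2$ estimate for $\nabla_s^{i+1}\vec{\kappa}$; a short exponent count then reproduces exactly $\alpha=(i+\tfrac12-\tfrac1p)/k$. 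It is essential to remain in the covariant formalism throughout: converting to Euclidean derivatives $\partial_s^{i}\vec{\kappa}$ via Lemma~\ref{Qlemma} would introduce genuinely higher-degree curvature products, for which the clean multiplicative estimate fails.

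I expect the main obstacle to be the bookkeeping of the boundary terms and the exponents: one must check that after each integration by parts the boundary contributions really have strictly lower weight than the left-hand side, so that via Lemma~\ref{lem:trickbdry} and Young's inequality they are genuinely absorbed into the left-hand side rather than accumulating through the iteration, and that the interpolation exponents produced at every step recombine to precisely the value $\alpha=(i+\tfrac12-\tfrac1p)/k$ forced by scaling. This is a homogeneity count carried out in detail in \cite[Sec.~2]{DKS} (see also \cite{Lin} and \cite{DP}); since only a lower bound on the length enters, the estimate then applies verbatim to each curve $f_i$ of the network with $C=C(n,k,p)$.
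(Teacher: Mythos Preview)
The paper does not prove this lemma at all: it is stated with the citation \cite[Lemma~4.1]{DP} and the surrounding text explicitly says that these interpolation results are quoted from \cite{DKS,Lin,DP} without proof. So there is nothing to compare against except the references.

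Your sketch follows the standard route (scale normalization to $\mathcal{L}=1$, covariant integration by parts for $p=2$, then $L^p$ via the $L^\infty$ embedding), and the overall architecture is correct. One point deserves care, however. After integrating by parts you bound the boundary contribution $\langle\nabla_s^{i-1}\vec{\kappa},\nabla_s^{i}\vec{\kappa}\rangle\big|_{\partial I}$ via Lemma~\ref{lem:trickbdry} and obtain, among other terms, $C\int_I|\nabla_s^{i}\vec{\kappa}|^{2}\,ds$. This has exactly the same order as the left-hand side and the constant $C$ coming from Lemma~\ref{lem:trickbdry} is \emph{not} small, so it cannot be literally ``absorbed'' as you write. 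The usual remedy, implicit in the references you cite, is to aim first for the \emph{additive} estimate
\[
\|\nabla_s^{i}\vec{\kappa}\|_{L^2}\le C\big(\|\nabla_s^{k}\vec{\kappa}\|_{L^2}+\|\vec{\kappa}\|_{L^2}\big),\qquad 0\le i\le k,
\]
where exact constants are irrelevant (so the problematic term is harmless: it is simply iterated upward until everything is expressed in terms of the top and bottom orders). The multiplicative form with the sharp exponent $\alpha=(i+\tfrac12-\tfrac1p)/k$ is then recovered by the scaling/homogeneity argument you already invoke at the start of your proposal. With that adjustment your outline matches the proofs in \cite{DKS,Lin,DP}.
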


\begin{cor}[Corollary~4.2 \cite{DP}]\label{corinter}
Let $f: I \rightarrow \mathbb{R}^n$ be a smooth regular curve. Then for all $k \in \mathbb{N}$ we have
\begin{equation*}
\|\vec{\kappa}\|_{k,2} \leq C ( \|\nabla^{k}_{s}\vec{\kappa}\|_{2} + \|\vec{\kappa}\|_{2}) \, ,
\end{equation*}
with $C=C(n,k)$.
\end{cor}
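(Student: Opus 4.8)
The plan is to deduce the corollary from the Gagliardo--Nirenberg inequality of Lemma~\ref{leminter} by combining it with Young's inequality and an absorption argument, using the scale invariance of the norms $\|\cdot\|_{k,2}$ and $\|\cdot\|_{2}$. Concretely, I would fix $k\in\mathbb{N}$ and, for each intermediate order $i$ with $0<i<k$, apply Lemma~\ref{leminter} with $p=2$. This gives
\[
\|\nabla_s^{i}\vec{\kappa}\|_{2}\leq C(n,k)\,\|\vec{\kappa}\|_{2}^{1-\alpha}\,\|\vec{\kappa}\|_{k,2}^{\alpha},\qquad \alpha=\frac{i}{k}\in(0,1).
\]

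Next I would apply Young's inequality in the form $ab\leq \varepsilon\,a^{1/\alpha}+C(\varepsilon,\alpha)\,b^{1/(1-\alpha)}$ with $a=\|\vec{\kappa}\|_{k,2}^{\alpha}$ and $b=\|\vec{\kappa}\|_{2}^{1-\alpha}$, which turns the above bound into $\|\nabla_s^{i}\vec{\kappa}\|_{2}\leq \varepsilon\,\|\vec{\kappa}\|_{k,2}+C(n,k,\varepsilon)\,\|\vec{\kappa}\|_{2}$ for every $i=1,\dots,k-1$. Summing over $i=0,1,\dots,k$, the endpoint terms $i=0$ and $i=k$ are already of the desired form, and the remaining $k-1$ intermediate terms contribute at most $(k-1)\varepsilon\,\|\vec{\kappa}\|_{k,2}+C(n,k,\varepsilon)\,\|\vec{\kappa}\|_{2}$. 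Choosing $\varepsilon=\varepsilon(k)$ so small that $(k-1)\varepsilon\leq \tfrac12$ and absorbing this term into the left-hand side yields
\[
\tfrac12\,\|\vec{\kappa}\|_{k,2}\leq \|\nabla_s^{k}\vec{\kappa}\|_{2}+C(n,k)\,\|\vec{\kappa}\|_{2},
\]
which is the claimed inequality.

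The argument is essentially routine, so there is no genuine analytic obstacle beyond Lemma~\ref{leminter} itself; the only point requiring a little care is that the final constant depends solely on $n$ and $k$ and not on the curve. This is guaranteed precisely because the norms $\|\nabla_s^{i}\vec{\kappa}\|_{2}$ carry the scale-invariant weights $\mathcal{L}[f]^{i+1-1/2}$, so that all powers of the length cancel both in Lemma~\ref{leminter} and in the application of Young's inequality; equivalently one could rescale so that $\mathcal{L}[f]=1$ and reduce to the unweighted estimate. Thus the ``hard part'' here is merely the bookkeeping of exponents needed to see that the absorption closes with a constant of the asserted type.
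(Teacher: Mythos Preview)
Your argument is correct and is precisely the standard route: apply Lemma~\ref{leminter} with $p=2$ for each intermediate $i$, split the product via Young's inequality, sum, and absorb. The paper itself does not supply a proof here but only cites \cite[Corollary~4.2]{DP}, where the same interpolation-plus-absorption argument is carried out; so your approach matches what the reference does.
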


\begin{lemma}[Lemma~3.4 \cite{DLP2}]\label{lemineqsh} 
Let $f: I \rightarrow \mathbb{R}^n$ be a smooth regular curve. For any $a,c,\ell\in \mathbb{N}_0$, $b \in \mathbb{N}$, $b\geq 2$, $c \leq \ell+2$ and $a < 2(\ell+2)$ we find
\begin{equation}\label{interineq1sh}
\int_I |P^{a,c}_{b} (\vec{\kappa})| \, ds \leq C \mathcal{L}[f]^{1-a-b} \|\vec{\kappa}\|_{2}^{b-\gamma} \| \vec{\kappa}\|_{\ell+2,2}^{\gamma} \, ,
\end{equation}
with $\gamma=(a+\frac12 b-1)/(\ell+2)$ and $C=C(n,\ell,a, b)$. Further if  $a+\frac12 b<2\ell+5$, then for any $\varepsilon>0$
\begin{align}\label{interineq1sh2}
 \int_I |P^{a,c}_{b}(\vec{\kappa})| \, ds & \leq \varepsilon \int_I |\nabla_s^{\ell+2} \vec{\kappa}|^2 \, ds +C \varepsilon^{-\frac{\gamma}{2-\gamma}}  (\|\vec{\kappa}\|^2_{L^{2}})^{\frac{b-\gamma}{2-\gamma}}
  + C  \mathcal{L}[f]^{1-a-\frac{b}{2}}  \|\vec{\kappa}\|_{L^{2}}^{b}  \, ,
\end{align}
with $C=C(n,\ell,a, b)$.
\end{lemma}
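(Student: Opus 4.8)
The plan is to derive both inequalities from the standard Gagliardo--Nirenberg type estimate of Lemma~\ref{leminter} (or, more precisely, its multiplicative consequences) applied to each factor in a term of type $P^{a,c}_b(\vec{\kappa})$. First I would fix a single model term
$$
\nabla_s^{i_1}\vec{\kappa} * \cdots * \nabla_s^{i_b}\vec{\kappa}, \qquad i_1+\cdots+i_b = a,\quad \max_j i_j \leq c \leq \ell+2,
$$
and estimate its $L^1$-norm over $I$ by
$$
\int_I |P^{a,c}_b(\vec{\kappa})|\,ds \leq \prod_{j=1}^b \Big( \int_I |\nabla_s^{i_j}\vec{\kappa}|^{p_j}\,ds\Big)^{1/p_j}
$$
via the generalized Hölder inequality, where the exponents $p_j$ are to be chosen with $\sum_j 1/p_j = 1$. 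The natural choice is $p_j$ such that each factor can be estimated by Lemma~\ref{leminter} with a common interpolation parameter; concretely one picks $p_j = b\ell'/(i_j + \ell'/b - 1/2)$-type exponents after passing to the scale-invariant norms $\|\cdot\|_{i_j,p_j}$, so that the exponents $\alpha_j = (i_j + \tfrac12 - \tfrac1{p_j})/\ell'$ (with $\ell' = \ell+2$) satisfy $\sum_j \alpha_j = \gamma = (a + \tfrac12 b - 1)/(\ell+2)$. This is precisely the bookkeeping that forces the condition $a < 2(\ell+2)$ (so that each $i_j < \ell+2$ is available, and the relevant $p_j \geq 2$ and $\alpha_j \in [0,1)$ can be arranged) and $c \leq \ell+2$ (so no factor carries more than $\ell+2$ derivatives). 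Collecting the powers of $\mathcal{L}[f]$ coming from the definition of the scale-invariant norms yields the exponent $1-a-b$, giving \eqref{interineq1sh}.

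For the second inequality \eqref{interineq1sh2} I would start from \eqref{interineq1sh}, use Corollary~\ref{corinter} to replace $\|\vec{\kappa}\|_{\ell+2,2}$ by $\|\nabla_s^{\ell+2}\vec{\kappa}\|_2 + \|\vec{\kappa}\|_2$, and then apply Young's inequality to the product $(\text{length power})\cdot\|\vec{\kappa}\|_2^{b-\gamma}\cdot(\|\nabla_s^{\ell+2}\vec{\kappa}\|_2 + \|\vec{\kappa}\|_2)^\gamma$. Splitting the $\gamma$-th power into the genuinely-high-order part and the low-order part, the high-order part is absorbed: one writes $X^\gamma Y^{b-\gamma} \leq \varepsilon X^2 + C\varepsilon^{-\gamma/(2-\gamma)} Y^{2(b-\gamma)/(2-\gamma)}$ with $X = \|\nabla_s^{\ell+2}\vec{\kappa}\|_{L^2}$ (note $X^2$ is not scale-invariant, hence the length powers must be tracked carefully here — they recombine with $1-a-b$ to give the stated exponents after one checks the arithmetic), while the remaining low-order term contributes the $\mathcal{L}[f]^{1-a-b/2}\|\vec{\kappa}\|_{L^2}^b$ piece. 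The condition $a + \tfrac12 b < 2\ell+5$ is exactly what guarantees $\gamma < 2$, so that Young's inequality with the exponent pair $(2/\gamma, 2/(2-\gamma))$ is legitimate and the absorption into $\varepsilon\int_I|\nabla_s^{\ell+2}\vec{\kappa}|^2\,ds$ is possible.

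The main obstacle I anticipate is not conceptual but the precise accounting of the powers of $\mathcal{L}[f]$ and the verification that the admissible ranges of the Hölder exponents $p_j$ are non-empty under the stated hypotheses: one must check simultaneously that $\sum_j 1/p_j = 1$ can be solved with each $p_j \geq 2$ (or handle the borderline $p_j = 2, \infty$ cases separately, which is where the assumption $b \geq 2$ enters to avoid a single factor having to absorb everything), that each $\alpha_j \in [0,1]$, and that $\sum_j \alpha_j = \gamma$. Once this distribution of derivatives and integrability among the $b$ factors is set up correctly, both estimates follow mechanically; the cleanest route is in fact to cite the scalar interpolation inequality in the form already packaged in \cite{DKS} or \cite{DLP2} and merely track how the junction/boundary-point mobility (which only requires the lower length bound, as emphasized before the lemma) leaves the argument unaffected.
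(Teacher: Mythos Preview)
Your plan is correct and is essentially the standard argument (H\"older on the product of factors, then Lemma~\ref{leminter} on each factor with exponents chosen so that the $\alpha_j$ sum to $\gamma$, followed by Corollary~\ref{corinter} and Young for the second estimate). Note, however, that the paper does not give its own proof of this lemma: it is stated with a reference to \cite[Lemma~3.4]{DLP2}, so there is no in-paper argument to compare against beyond the fact that the cited proof proceeds exactly along the lines you describe.
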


\begin{lemma}[Lemma~3.5 \cite{DLP2}]\label{lemineqshsum}
Let $f: I \rightarrow \mathbb{R}^n$ be a smooth regular curve and $\ell \in \mathbb{N}_{0}$. If $A,B\in \mathbb{N}$ with $B \geq 2$,  and $A+\frac12 B<2\ell+5$ then we have
\begin{align}\label{interineq1sum}
&\sum_{\substack{[[a,b]]\leq[[A,B]]\\c\leq \ell+2, \,  2\leq b}}  \int_I |P^{a,c}_{b} (\vec{\kappa})| \, ds 
\\ 
& \leq C \min\{1, \mathcal{L}([f])\}^{1-2A-B} \max\{1, \| \vec{\kappa}\|_{2}\}^{2A +B} \max\{1, \| \vec{\kappa}\|_{\ell+2,2}\}^{\overline{\gamma}}  \, , \nonumber
\end{align}
and for any $\varepsilon \in (0,1)$
\begin{align}\label{interineq2sum}
\sum_{\substack{[[a,b]]\leq[[A,B]]\\c\leq \ell+2, \,  2\leq b}}  \int_I |P^{a,c}_{b} (\vec{\kappa})| \, ds 
& \leq \varepsilon \int_I |\nabla_s^{\ell+2} \vec{\kappa}|^2 \, ds +C \varepsilon^{-\frac{\overline{\gamma}}{2-\overline{\gamma}}} \max\{1, \|\vec{\kappa}\|^2_{L^{2}}\}^{\frac{2A +B}{2-\overline{\gamma}}}\\
& \quad  + C \min\{1, \mathcal{L}[f]\}^{1-A-\frac{B}{2}} \max\{1, \|\vec{\kappa}\|_{L^{2}}\}^{2 A+ B}   \, , \nonumber
\end{align}
with $\overline{\gamma}= (A + \frac12 B-1)/(\ell+2)$ and $C=C(n,\ell,A,B)$ .
\end{lemma}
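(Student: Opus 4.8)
The plan is to reduce everything to the single--term interpolation estimates of Lemma~\ref{lemineqsh}, applied summand by summand, and then to check that the finitely many term--dependent exponents that appear are all dominated — in the correct monotone direction — by the uniform exponents in the claim.

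First I would unwind the definition \eqref{eq:sum_P^a_b}: with the extra restriction $2\le b$, the sum on the left ranges over $a\in\{0,\dots,A\}$, $b\in\{2,\dots,2A+B-2a\}$, $c\in\{0,\dots,\ell+2\}$, hence it is a finite sum with at most $N=N(A,B,\ell)$ summands. For each admissible triple $(a,b,c)$ the hypotheses of Lemma~\ref{lemineqsh} hold: $c\le\ell+2$ by assumption; from $b\le 2A+B-2a$ one gets $a+\tfrac12 b\le A+\tfrac12 B<2\ell+5$, which in particular yields $a< 2(\ell+2)$ and makes both \eqref{interineq1sh} and \eqref{interineq1sh2} available. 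Writing $\gamma_{a,b}=(a+\tfrac12 b-1)/(\ell+2)$ we record the three elementary inequalities used throughout: $0\le\gamma_{a,b}\le\overline{\gamma}<2$ (the last because $A+\tfrac12 B<2\ell+5$), $1-a-b\ge 1-2A-B$ (since $a+b\le 2A+B$), and $1-a-\tfrac{b}{2}\ge 1-A-\tfrac{B}{2}$ (since $a+\tfrac12 b\le A+\tfrac12 B$).

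For the first estimate I would apply \eqref{interineq1sh} to each summand and bound each factor by the uniform one: $\mathcal{L}[f]^{1-a-b}\le\min\{1,\mathcal{L}[f]\}^{1-a-b}\le\min\{1,\mathcal{L}[f]\}^{1-2A-B}$; $\|\vec{\kappa}\|_{2}^{b-\gamma_{a,b}}\le\max\{1,\|\vec{\kappa}\|_{2}\}^{2A+B}$ (here $0\le b-\gamma_{a,b}\le b\le 2A+B$); and $\|\vec{\kappa}\|_{\ell+2,2}^{\gamma_{a,b}}\le\max\{1,\|\vec{\kappa}\|_{\ell+2,2}\}^{\overline{\gamma}}$. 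Summing the $N$ resulting bounds gives \eqref{interineq1sum} with $C=C(n,\ell,A,B)$. For the second estimate I would apply \eqref{interineq1sh2} with $\varepsilon$ replaced by $\varepsilon/N$ to each summand, so that the $N$ leading terms add up to at most $\varepsilon\int_I|\nabla_s^{\ell+2}\vec{\kappa}|^2\,ds$. The remaining terms are handled as before: $\varepsilon^{-\gamma_{a,b}/(2-\gamma_{a,b})}\le N^{\overline{\gamma}/(2-\overline{\gamma})}\varepsilon^{-\overline{\gamma}/(2-\overline{\gamma})}$ because $t\mapsto t/(2-t)$ is increasing on $[0,2)$, $\gamma_{a,b}\le\overline{\gamma}<2$, and $\varepsilon<1$; $(\|\vec{\kappa}\|_{L^2}^2)^{(b-\gamma_{a,b})/(2-\gamma_{a,b})}\le\max\{1,\|\vec{\kappa}\|_{L^2}^2\}^{(2A+B)/(2-\overline{\gamma})}$ since $b-\gamma_{a,b}\le 2A+B$ and $2-\gamma_{a,b}\ge 2-\overline{\gamma}$; and $\mathcal{L}[f]^{1-a-b/2}\|\vec{\kappa}\|_{L^2}^{b}\le\min\{1,\mathcal{L}[f]\}^{1-A-B/2}\max\{1,\|\vec{\kappa}\|_{L^2}\}^{2A+B}$, where for the length factor one distinguishes the case $\mathcal{L}[f]\le 1$ (use $1-a-\tfrac b2\ge 1-A-\tfrac B2$) from $\mathcal{L}[f]>1$ (use $a+\tfrac b2\ge 1$, so the exponent is $\le 0$ and the factor is $\le 1=\min\{1,\mathcal{L}[f]\}^{1-A-B/2}$). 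Summing over the $N$ summands yields \eqref{interineq2sum}.

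The main (and essentially only) obstacle is this exponent bookkeeping: one has to be sure that for every admissible pair $(a,b)$ the quantities $\gamma_{a,b}$, $1-a-b$, $1-a-\tfrac b2$ and $(b-\gamma_{a,b})/(2-\gamma_{a,b})$ are dominated in the right (increasing, resp.\ decreasing) direction by $\overline{\gamma}$, $1-2A-B$, $1-A-\tfrac B2$ and $(2A+B)/(2-\overline{\gamma})$; this is exactly what forces the asymmetric appearance of $\min\{1,\cdot\}$ on the lengths and $\max\{1,\cdot\}$ on the curvature norms. The inequality $\overline{\gamma}<2$, i.e.\ the hypothesis $A+\tfrac12 B<2\ell+5$, is what guarantees that the negative powers of $\varepsilon$ stay finite and uniform over all summands.
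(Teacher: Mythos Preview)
Your proof is correct and is precisely the standard argument: the paper does not reprove this lemma but cites \cite[Lemma~3.5]{DLP2}, and the argument there is exactly the term-by-term application of Lemma~\ref{lemineqsh} together with the exponent bookkeeping you carry out. Your handling of the monotonicity of $t\mapsto t/(2-t)$, the bounds $0\le\gamma_{a,b}\le\overline{\gamma}<2$, and the case distinction $\mathcal{L}[f]\lessgtr 1$ is accurate, so nothing is missing.
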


\section{Treatment of the boundary terms} \label{sec:4}

Similar to \cite[Lemma 2.4]{DLP} we see that at the fixed boundary points the derivatives of the curvature of any even order vanish.
\begin{lemma}\label{kgood}
Let $f$ be a smooth solution of $\partial_{t}f = -\nabla_s^2 \vec{\kappa} + \lambda \vec{\kappa}- \frac12 |\vec{\kappa}|^2 \vec{\kappa}+\varphi \tau$ on $(0,T) \times I$ with $\lambda=\lambda(t)$ subject to the boundary conditions $f(t,1)=P$ (for some $P \in \R^n$) and $\vec{\kappa}(t,1)=0$ for all $t \geq 0$. Then $\varphi(t,1) =0$ and 
$\nabla_s^{2l} \vec{\kappa}(t,1)=0$ for all $l \in \mathbb{N}_0$ and for all times $t \in (0,T)$.
\end{lemma}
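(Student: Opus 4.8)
The plan is to prove the vanishing of even-order normal derivatives of the curvature at the fixed boundary point $x=1$ by induction on $l$, using the evolution equation evaluated at the boundary together with the fact that $f(t,1)$ is constant in time.

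First I would establish $\varphi(t,1)=0$: since $f(t,1)=P$ is constant, $\partial_t f(t,1)=0$, and hence the right-hand side of the flow equation, namely $\vec V + \varphi\tau = -\nabla_s^2\vec\kappa - \tfrac12|\vec\kappa|^2\vec\kappa + \lambda\vec\kappa + \varphi\tau$, must vanish at $x=1$. Projecting onto the tangent $\tau$ kills the normal part $\vec V$ and leaves $\varphi(t,1)\tau(t,1)=0$, so $\varphi(t,1)=0$ (this also matches Remark~\ref{rem3.3}). The case $l=0$ of the main claim is the given boundary condition $\vec\kappa(t,1)=0$.

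Next comes the inductive step. Assume $\nabla_s^{2j}\vec\kappa(t,1)=0$ for all $j\le l$ and all $t$. I want to show $\nabla_s^{2(l+1)}\vec\kappa(t,1)=0$. The key identity is Lemma~\ref{evolcurvature}: with $m=2l$,
\begin{equation*}
\nabla_t\nabla_s^{2l}\vec\kappa + \nabla_s^{4}\nabla_s^{2l}\vec\kappa = \varphi\,\nabla_s^{2l+1}\vec\kappa + \sum_{\substack{[[a,b]]\le[[2l+2,3]]\\ c\le 2l+2,\ b\ \text{odd}}} P^{a,c}_b(\vec\kappa).
\end{equation*}
Evaluate this at $x=1$. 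On the left, $\nabla_t\nabla_s^{2l}\vec\kappa(t,1)$ is the time-derivative of $\nabla_s^{2l}\vec\kappa$ at the fixed point $x=1$; by the inductive hypothesis $\nabla_s^{2l}\vec\kappa(\cdot,1)\equiv 0$, so this term vanishes (one must be a little careful that $\nabla_t$ versus $\partial_t$ differ only by a tangential correction $-\langle\cdots,\tau\rangle\tau$, cf. \eqref{d}, but since $\nabla_s^{2l}\vec\kappa(t,1)=0$ identically in $t$ that correction vanishes too). The term $\varphi\nabla_s^{2l+1}\vec\kappa$ vanishes because $\varphi(t,1)=0$. Each term $P^{a,c}_b(\vec\kappa)$ with $b$ odd and $c\le 2l+2$ is, up to contractions, a product of $b\ge 3$ factors of the form $\nabla_s^{i_j}\vec\kappa$ with $\sum i_j = a \le 2l+2$ and $\max i_j\le 2l+2$; I would argue that in each such product at least one factor has an even index $i_j\le 2l$ (by a parity/pigeonhole count: $b$ is odd with $b\ge 3$, and the $i_j$ sum to $a\le 2l+2$, so not all can be odd-and-large — in fact the only way to avoid an even index $\le 2l$ would force the remaining indices to be too large), hence by the inductive hypothesis that factor vanishes at $x=1$, so the whole term vanishes. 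Therefore $\nabla_s^{4}\nabla_s^{2l}\vec\kappa(t,1)=\nabla_s^{2(l+1)}\vec\kappa(t,1)=0$, completing the induction.

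The main obstacle is the combinatorial parity argument for the terms $P^{a,c}_b(\vec\kappa)$: one has to check carefully that every admissible multi-index $(i_1,\dots,i_b)$ with $b$ odd, $b\ge 3$, $\sum i_j\le 2l+2$, $\max i_j\le 2l+2$ contains an entry that is even and $\le 2l$, so that the inductive hypothesis applies to at least one factor. I expect this follows from a short case distinction (the difficult configurations are those with one large factor $\nabla_s^{2l+2}\vec\kappa$ or $\nabla_s^{2l+1}\vec\kappa$ and small remaining factors, where one quickly sees a remaining factor is forced to be $\nabla_s^0\vec\kappa=\vec\kappa$ or $\nabla_s^2\vec\kappa$, etc.), and this is precisely the analogue of the argument in \cite[Lemma~2.4]{DLP}; I would either spell it out or simply cite that reference. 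A secondary (minor) technical point is justifying the interchange/identification of $\nabla_t$ at $x=1$ with $\frac{d}{dt}$ of the boundary value, which is clean because $x=1$ is a fixed material point of the flow.
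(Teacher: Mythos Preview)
Your approach is the paper's: induction using Lemma~\ref{evolcurvature} together with $\varphi(t,1)=0$ and the vanishing of the time derivative of the inductive quantity. Two things need fixing.

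First, an indexing slip: $\nabla_s^4\nabla_s^{2l}\vec\kappa = \nabla_s^{2l+4}\vec\kappa$, not $\nabla_s^{2l+2}\vec\kappa$. Take $m = 2l-2$ instead (equivalently: assume the claim up to order $2n$, apply the lemma with $m=2n-2$, and conclude for order $2n+2$, as the paper does). Second, your parity concern is justified, but the resolution is not a case distinction---rather, the compressed sum form of Lemma~\ref{evolcurvature} you quote is too coarse for the argument, since it formally allows $b=1$ and also odd values of $a$ (for instance $b=3$ with indices $(1,1,1)$ has no even factor at all, so your pigeonhole sketch cannot work there). Use instead the precise first display of the lemma,
\[
\nabla_t \nabla_s^m \vec{\kappa}+ \nabla_s^{m+4} \vec{\kappa}= P^{m+2,m+2}_3 (\vec{\kappa})+ \lambda \bigl(\nabla_s^{m+2} \vec{\kappa} + P^{m,m}_3 (\vec{\kappa})\bigr) +P^{m,m}_5 (\vec{\kappa}) + \varphi \nabla_s^{m+1} \vec{\kappa}.
\]
With $m=2n-2$ even, the lone $b=1$ term $\lambda\nabla_s^{2n}\vec\kappa$ vanishes directly by the hypothesis; the remaining terms have $b\in\{3,5\}$ odd and total derivative count $a\in\{2n,2n-2\}$ even, and an odd number of indices summing to an even total forces at least one index $i_j$ to be even, hence $\le c\le 2n$ and in the inductive range. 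That is the paper's one-line argument---no separate treatment of ``difficult configurations'' is needed.
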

\begin{proof} 
For $l=0,1$ the claim follows immediately from the boundary conditions and the fact that $f$ is a smooth solution. 
Notice that because of the boundary conditions $\varphi(t,1)=0$ for all $t$. 
The case $l=2$ is a consequence of \eqref{e}: indeed at $x=1$ we have
\begin{equation*}
0= \nabla_t \vec{\kappa} = \nabla_s^2 V = -\nabla_s^4 \vec{\kappa} -\frac12 \nabla_s^2 (|\vec{\kappa}|^2 \vec{\kappa}) + \lambda \nabla_s^2 \vec{\kappa} = - \nabla_s^4 \vec{\kappa} \,.
\end{equation*} 
The general statement follows from an induction argument. Indeed, assume that the claim is true up to $2n$, $n \in \mathbb{N}$. By the induction assumption it follows that
$$\nabla_t \nabla_s^{2n-2} \vec{\kappa}(t,1)=0 \mbox{ for all }t \in (0,T) \, . $$
Lemma \ref{evolcurvature} with $m=2n-2$ gives that at $x=1$
$$\nabla_s^{2n+2} \vec{\kappa}= P^{2n,2n}_3 (\vec{\kappa})+ \lambda (\nabla_s^{2n} \vec{\kappa} + P^{2n-2,2n-2}_3 (\vec{\kappa})) + P^{2n-2,2n-2}_5 (\vec{\kappa})+ \varphi \nabla_s^{2n-1} \vec{\kappa}   .$$
Since $\varphi(t,1)=0$ for all $t$ 
and in all the other terms on the right-hand side there is an odd number of factors and an even number of derivatives, we see that $\nabla_s^{2n+2} \vec{\kappa}(t,1)=0$  for all $t \in (0,T)$.
\end{proof}

We consider now the triple junction where the tangential component plays an important role.

\begin{lemma}\label{lemtriple}
Let $\Gamma=\{f_1,f_2,f_3\}$ be a smooth solution of \eqref{flownetwork1}  on $(0,T) \times I$ subject to the boundary conditions \eqref{bc1}, and assume $\lambda_i$ is constant for any $i \in\{1,2,3\}$. 
Then at $x=0$ (i.e. at the junction point) for any $t \in(0,T)$ we have
\begin{align}\label{bm4}
\nabla_s^4  \vec{\kappa}_i  & = \lambda_i \nabla_s^{2} \vec{\kappa}_i + \varphi_i \nabla_s \vec{\kappa}_i \, , \quad i=1,2,3,\\
\label{bm8}
\nabla_s^8  \vec{\kappa}_i  & =\sum_{\substack{[[a,b]]\leq [[6,3]]\\c\leq 6\\b \, \, odd}}P^{a,c}_{b} (\vec{\kappa}_{i})
+ \varphi_{i} \sum_{\substack{[[a,b]]\leq [[5,1]]\\c\leq 5\\b\,\, odd}}P^{a,c}_{b} (\vec{\kappa}_{i}) - \varphi_{i}^{2}\nabla_s^{2} \vec{\kappa}_i - (\partial_{t} \varphi_{i} ) \, \nabla_s \vec{\kappa}_i,
\end{align}
more generally we can write for $m \in \N$, $m \geq 2$
\begin{align}\label{b4m}
\nabla_{s}^{4m} \vec{\kappa}_i  &=\sum_{\substack{[[a,b]]\leq [[4m-2,3]]\\c\leq 4m-2\\b \, \, odd}}P^{a,c}_{b} (\vec{\kappa}_{i}) +
\sum_{\beta \in S_{4m}^{m-1}} \prod_{l=0}^{m-1} ( \varphi_{i}^{(l)})^{\beta_{l}}\sum_{\substack{[[a,b]]\leq [[4m-|\beta|,1]]\\c\leq 4m-|\beta|\\b \, \, odd}}P^{a,c}_{b} (\vec{\kappa}_{i}) \, , 
\end{align}
where 
$\varphi_{i}^{(l)} =\frac{\partial^{l}}{\partial t^l} \varphi_{i}$ 
and 
\begin{align}\label{DefBeta}
S_{i}^{l}:=\{ \beta=(\beta_{0}, \ldots, \beta_{l}) \in \N_{0}^{l+1} \, : \,0< |\beta|:=3\beta_{0}+ (3+4)\beta_{1} + \ldots +(3+4l) \beta_{l} < i  \}. 
\end{align}

Furthermore at $x=0$ (i.e. at the junction point) for any $t \in(0,T)$ we have
\begin{align}
\sum_{i=1}^3 \nabla_s^5  \vec{\kappa}_i  & = \sum_{i=1}^3 \left( P^{3,3}_3 (\vec{\kappa}_i) +2\lambda_i \nabla_s^{3} \vec{\kappa}_i - \lambda_i^2 \nabla_s \vec{\kappa}_i + \varphi_i \nabla_s^{2} \vec{\kappa}_i + (P_2^{4,3}(\vec{\kappa}_i)- \lambda_i  |\nabla_s \vec{\kappa}_i|^2)\partial_s f_i\right) \nonumber \\ \label{bm5}
& =\sum_{i=1}^3  \Big(  \sum_{\substack{[[a,b]]\leq [[3,3]]\\c\leq 3\\b \, \, odd}}P^{a,c}_{b} (\vec{\kappa}_{i}) + \varphi_i \nabla_s^{2} \vec{\kappa}_i + (\partial_s f_i) \sum_{\substack{[[a,b]]\leq [[4,2]]\\c\leq 3\\b \, \, even}}P^{a,c}_{b} (\vec{\kappa}_{i})
  \Big) \, , \\ 
 \sum_{i=1}^3 \nabla_s^9  \vec{\kappa}_i  &  = \sum_{i=1}^3 \Big(
 \sum_{\substack{[[a,b]]\leq [[7,3]]\\c\leq 7\\b \, \, odd}}P^{a,c}_{b} (\vec{\kappa}_{i}) 
 +\varphi_{i} \sum_{\substack{[[a,b]]\leq [[6,1]]\\c\leq 6\\b \, \, odd}}P^{a,c}_{b} (\vec{\kappa}_{i})  -\varphi_{i}^{2} \nabla_{s}^{3}\vec{\kappa}_{i} - \partial_{t}\varphi_{i}\nabla_{s}^{2}\vec{\kappa}_{i} \nonumber \\  \label{bm9}
 & \qquad 
 +  \partial_{s}f_{i} 
 \Big[    
 \sum_{\substack{[[a,b]]\leq [[8,2]]\\ c\leq 7 \\b \, \, even}} P^{a,c}_{b} (\vec{\kappa}_{i}) 
 +  \varphi_{i}\sum_{\substack{[[a,b]]\leq [[5,2]]\\c\leq 4\\b \, \, even}}     P^{a, c}_{b} (\vec{\kappa}_{i}) 
  \Big ]    \Big) ,
\end{align} 
more generally we can write for $m \in \N$, 
\begin{align}
  \label{b5+4m}
 \sum_{i=1}^3 &\nabla_s^{5+4m}  \vec{\kappa}_i   = \sum_{i=1}^3 \Big(
 \sum_{\substack{[[a,b]]\leq [[5+4m -2,3]]\\c\leq 5+4m-2\\b \, \, odd}}P^{a,c}_{b} (\vec{\kappa}_{i}) 
\\& \quad +\sum_{\beta \in S_{4+4m}^{m}} \prod_{l=0}^{m} ( \varphi_{i}^{(l)})^{\beta_{l}}\sum_{\substack{[[a,b]]\leq [[5+4m-|\beta|,1]]   \nonumber \\c\leq 5+4m-|\beta|\\b \, \, odd}}P^{a,c}_{b} (\vec{\kappa}_{i}) 
 \\
 & \quad +  \partial_{s}f_{i} \Big[    \sum_{\substack{[[a,b]]\leq [[4+4m,2]]\\c\leq  3+4m\\b \, \, even}} 
 P^{a,c}_{b} (\vec{\kappa}_{i}) 
 +  \sum_{\beta \in S_{ 4m}^{m-1}} \prod_{l=0}^{m-1} ( \varphi_{i}^{(l)})^{\beta_{l}}\sum_{\substack{[[a,b]]\leq [[4+4m-|\beta|,2]]\\c\leq 3+4m-|\beta|\\b \, \, even}}P^{a,c}_{b} (\vec{\kappa}_{i}) 
  \Big ]    \Big) .\nonumber
\end{align}

\end{lemma}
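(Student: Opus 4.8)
The plan is to derive all formulas at the junction point $x=0$ by exploiting the fact that the boundary condition $\sum_{i=1}^3(\nabla_s\vec{\kappa}_i-\lambda_i\partial_s f_i)|_{x=0}=0$ holds for all $t$, so it may be differentiated arbitrarily often in time. First I would establish the ``single-curve'' identities \eqref{bm4}, \eqref{bm8} and the general \eqref{b4m} which do not require summing over $i$: these follow from the boundary conditions $\vec{\kappa}_i(t,0)=0$ for all $t$. Differentiating this in time and using \eqref{e} gives $\nabla_t\vec{\kappa}_i=0$ at $x=0$ in the normal direction, but more usefully: from Lemma~\ref{evolcurvature} with $m=0$ we get $\nabla_s^4\vec{\kappa}_i=-\nabla_t\vec{\kappa}_i+\lambda_i\nabla_s^2\vec{\kappa}_i-\tfrac12\nabla_s^2(|\vec{\kappa}_i|^2\vec{\kappa}_i)+\varphi_i\nabla_s\vec{\kappa}_i$; evaluating at $x=0$, using $\vec{\kappa}_i=0$ there (so $P$-terms with a bare $\vec{\kappa}_i$ factor vanish and $\nabla_t\vec{\kappa}_i$ has no component recorded) yields \eqref{bm4}. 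For \eqref{bm8} I would apply $\nabla_t$ (or better $\partial_t$, using Lemma~\ref{evolcurvature} iteratively) to \eqref{bm4}, keeping careful track: each application of $\partial_t$ to $\nabla_s^{2k}\vec{\kappa}_i$ raises the spatial order by $4$ via $\nabla_s^4$, produces $P$-terms, and introduces a $\varphi_i$ or a time derivative $\partial_t\varphi_i$ through the $\varphi_i\nabla_s\vec{\kappa}_i$ and $\varphi_i\nabla_s^2\vec{\kappa}_i$ pieces — which is exactly where the products $\prod(\varphi_i^{(l)})^{\beta_l}$ in \eqref{b4m} come from. The index set $S^l_i$ in \eqref{DefBeta} is precisely bookkeeping: a factor $\varphi_i^{(l)}=\partial_t^l\varphi_i$ ``costs'' $3+4l$ in spatial order (the $3$ because it multiplies a $P^{\,\cdot,\cdot}_1$ with one $\nabla_s$ derivative that, after being hit once by $\nabla_s^4$... — the precise accounting is the content of the induction), so $|\beta|<i$ records the remaining budget.

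For the summed identities \eqref{bm5}, \eqref{bm9}, \eqref{b5+4m} the starting point is differentiating the junction condition $\sum_i(\nabla_s\vec{\kappa}_i-\lambda_i\partial_s f_i)=0$ in time. Using \eqref{f} and \eqref{c} together with \eqref{e}, one computes $\partial_t(\nabla_s\vec{\kappa}_i)$ and $\partial_t(\partial_s f_i)=\nabla_s\vec V_i+\varphi_i\vec{\kappa}_i$ at $x=0$; since $\vec{\kappa}_i(t,0)=0$ the term $\varphi_i\vec{\kappa}_i$ drops, leaving $\partial_t(\partial_s f_i)=\nabla_s\vec V_i=-\nabla_s^3\vec{\kappa}_i-\tfrac12\nabla_s(|\vec{\kappa}_i|^2\vec{\kappa}_i)+\lambda_i\nabla_s\vec{\kappa}_i$. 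The commutator \eqref{f} contributes terms of the form $\langle\vec{\kappa}_i,\vec{\phi}\rangle\nabla_s\vec V_i-\langle\nabla_s\vec V_i,\vec{\phi}\rangle\vec{\kappa}_i$ which again vanish or simplify at $x=0$ because $\vec{\kappa}_i=0$, except for those producing the tangential $\partial_s f_i$-components (coming from projecting $\partial_t$ of a normal field onto $\tau_i$ via \eqref{d}). Collecting, $\sum_i\nabla_s^5\vec{\kappa}_i$ at $x=0$ equals $\sum_i$ of: a normal part coming from $-\partial_t(\nabla_s\vec{\kappa}_i)$ plus $\lambda_i\partial_t(\partial_s f_i)$-style terms (yielding the $2\lambda_i\nabla_s^3\vec{\kappa}_i-\lambda_i^2\nabla_s\vec{\kappa}_i$ and $\varphi_i\nabla_s^2\vec{\kappa}_i$ and $P^{3,3}_3$ pieces), plus a tangential $\partial_s f_i$-part with coefficients $P^{4,3}_2(\vec{\kappa}_i)-\lambda_i|\nabla_s\vec{\kappa}_i|^2$. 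The second line of \eqref{bm5} is just repackaging this into the compact $P$-sum notation, using that $\nabla_s P^{a,c}_b=P^{a+1,c+1}_b$ for odd $b$ and that $\lambda_i$ is an absorbable constant.

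The inductive step for \eqref{b5+4m} (with \eqref{bm9} the case $m=1$) proceeds by applying $\partial_t$ to the formula for $\sum_i\nabla_s^{5+4m}\vec{\kappa}_i$ and using parts (iii)--(iv) of Lemma~\ref{lemLin} — specifically \eqref{E4sum-odd} and \eqref{E4sum-even} — to push $\partial_t$ through the odd-$b$ normal $P$-sums and the even-$b$ tangential $P$-sums, while $\partial_t$ hitting $\nabla_s^{5+4m}\vec{\kappa}_i$ itself is rewritten via Lemma~\ref{evolcurvature} as $-\nabla_s^{5+4m+4}\vec{\kappa}_i+(\text{odd }P\text{-terms})+\varphi_i\nabla_s^{5+4m+1}\vec{\kappa}_i$. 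The crucial observations that make the $\varphi$-bookkeeping close are: (a) by Lemma~\ref{evolcurvature} no \emph{spatial} derivatives of $\varphi_i$ ever appear, only time derivatives; (b) $\partial_t$ hitting an existing factor $\prod_l(\varphi_i^{(l)})^{\beta_l}$ raises one $\beta_l\mapsto\beta_l-1,\ \beta_{l+1}\mapsto\beta_{l+1}+1$, consistent with the cost function $3+4l\mapsto 3+4(l+1)=3+4l+4$ and the spatial order dropping by $4$ — so $S^m_{4+4m}$ grows to $S^{m+1}_{4+4(m+1)}$ exactly as claimed; (c) the tangential $\partial_s f_i$-terms that newly appear when $\partial_t$ acts on a normal $P$-sum are of even-$b$ type with the right budget, and when $\partial_t$ acts on $\partial_s f_i$ itself one uses $\partial_t(\partial_s f_i)=\nabla_s\vec V_i$ again (a normal field), converting a tangential term back to a normal one of the correct order. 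The main obstacle, and the part deserving the most care, is precisely this combinatorial accounting of how time derivatives of $\varphi_i$ accumulate and how tangential and normal contributions interchange under $\partial_t$ while respecting the index sets $S_i^l$ and the $(a,b,c)$ bounds; once the $m=0,1$ cases are checked by hand as anchors, the induction is bookkeeping, but getting the bounds on $c$ (the $\max\{d,3\}$-type caps) and the ranges of $b$ exactly right requires patient use of Lemma~\ref{lemLin}. I would relegate the full induction to an appendix, as the authors appear to do elsewhere, and in the main text verify \eqref{bm4} and \eqref{bm5} carefully and indicate the pattern.
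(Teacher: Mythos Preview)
Your proposal is correct and follows essentially the same approach as the paper: for \eqref{bm4}, \eqref{bm8}, \eqref{b4m} you differentiate the boundary condition $\vec{\kappa}_i(t,0)=0$ in time via Lemma~\ref{evolcurvature} (the paper applies $\nabla_t$ to \eqref{bm4} and then inducts using \eqref{E2sum}), and for \eqref{bm5}, \eqref{bm9}, \eqref{b5+4m} you differentiate the summed junction condition $\sum_i(\nabla_s\vec{\kappa}_i-\lambda_i\partial_s f_i)|_{x=0}=0$ in time, using \eqref{c}, \eqref{d}, Lemma~\ref{evolcurvature} and the formulas \eqref{E4sum-odd}, \eqref{E4sum-even} of Lemma~\ref{lemLin} for the induction---exactly as the paper does (with the full induction relegated to Appendix~\ref{AppBLemtriple}). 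Your reading of the index set $S_i^l$ as encoding that a factor $\varphi_i^{(l)}$ costs $3+4l$ spatial derivatives is precisely the paper's Remark~\ref{rem:varphi}.
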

\begin{rem}\label{rem:varphi}
In this lemma we see the \lq algebra\rq ~of the tangential component. More precisely, looking at the sets $S_i^l$ as defined in \eqref{DefBeta} and, in particular, at the special definition of the length of the multiindex, one sees that a factor $\varphi_i$ takes the place of three derivatives of the curvature, while a factor $\partial_t ^{\ell} \varphi_i=\varphi_i^{(\ell)}$ takes the place as $3+4l$ derivatives of the curvature. The order of these terms is given in Lemma \ref{lem:ordervarphi} below.
\end{rem}
\begin{proof}
Since $\vec{\kappa}(t,0)=0$ for all $t\in (0,T)$ then \eqref{bm4} follows from \eqref{e} as done in Lemma~\ref{kgood} above.
Next, since \eqref{bm4} holds for any time, we can apply $\nabla_{t}$ to both sides of the equation. Application of Lemma~\ref{evolcurvature} and \eqref{cs3} give then \eqref{bm8}. An induction argument using Lemma~\ref{evolcurvature} and \eqref{E2sum} gives then \eqref{b4m}. Details are given in Appendix \ref{AppBLemtriple}.\\
From the other boundary condition at the triple junction we find that at $x=0$
$$ \partial_t \sum_{i=1}^3 (\nabla_s \vec{\kappa}_i - \lambda_i \partial_s f_i) = 0 \, . $$
Using \eqref{d}, Lemma \ref{evolcurvature}, \eqref{c} and the fact that $\vec{\kappa}_i(t,0)=0$ we get 
\begin{align*}
& \sum_{i=1}^3 (-\nabla_s^5 \vec{\kappa}_i + P_3^{3,3}(\vec{\kappa}_i)+ \lambda_i \nabla_s^3 \vec{\kappa}_i + \varphi_i \nabla_s^2 \vec{\kappa}_i + (P_2^{4,3}(\vec{\kappa}_i)- \lambda_i  |\nabla_s \vec{\kappa}_i|^2 - \partial_t \lambda_i)\partial_s f_i)  \\
& \qquad = \sum_{i=1}^3 \lambda_i (-\nabla_s^3 \vec{\kappa}_i + \lambda_i \nabla_s \vec{\kappa}_i)\, ,
\end{align*}  
from which the \eqref{bm5} 
follows. 
Differentiating in time \eqref{bm5}, using \eqref{d}, the fact that $\vec{\kappa}=0$, \eqref{E3odd}, \eqref{E3even}, Lemma~\ref{evolcurvature}, and \eqref{c} yield \eqref{bm9}. In a similar way we obtain by induction \eqref{b5+4m}. Details are given in the Appendix~\ref{AppBLemtriple}.
\end{proof}


\section{Treatment of the tangential component} \label{sec:5}
Here we study the tangential component at the junction point, the only point where the problem gives us information on the $\varphi_i$'s (see Section \ref{ste}). It is exactly here that we need the topological condition that the dimension of the space spanned by the unit tangents at the junction is at least two, see \eqref{dim2} below.

\begin{rem}\label{remjunctphi}
In order that the curves remain attached, it is necessary that 
$$ \partial_t f_i(t,0) = \partial_t f_j(t,0) \mbox{ for all }t \in (0,T) \mbox{ and }i,j=1,2,3. $$
This follows from differentiating with respect to $t$ the equality $f_i(t,0) =f_j(t,0)$.

The condition is also sufficient. Indeed, since the initial datum is attached we have for any $i,j =1,2,3$
\begin{align*}
f_i(t,0) & = f_i(0,0) + \int_0^t \partial_t f_i(u,0) \ du 
= f_j(0,0) + \int_0^t \partial_t f_j(u,0) \ du = f_j(t,0) \, .
\end{align*} 

This gives in particular a condition on the tangential part. Indeed, since the curvature is zero at $0$ it is necessary that at $0$ for $i,j=1,2,3$
$$ - \nabla_{s}^2 \vec{\kappa}_i  + \varphi_i \partial_s f_i = - \nabla_{s}^2 \vec{\kappa}_j  + \varphi_j  \partial_s f_j \, .$$
That is, at zero (i.e. at the triple junction)
$$ \varphi_i  =  - \langle \nabla_{s}^2 \vec{\kappa}_j , \partial_s f_i \rangle   + \varphi_j  \langle \partial_s f_j, \partial_s f_i \rangle = \langle \partial_t f_j, \partial_s f_i \rangle \, .$$
Let us elaborate on this a bit further. For the sake of notation 
denote $$A_{i}= A_{i}(t):=\nabla_{s}^2 \vec{\kappa}_i \Big|_{x=0} \qquad \text{ and }  \qquad T_{i}=T_{i}(t)= \partial_{s} f_{i}(t,0).$$ Also we write $\varphi_{i}$ meaning $\varphi_{i}(t,0)$.
Using the above identity yields that
\begin{align*}
\varphi_{i}& = -\langle A_{i+1}, T_{i} \rangle + \varphi_{i+1} \langle T_{i+1}, T_{i} \rangle,\\
\varphi_{i}& = -\langle A_{i+2}, T_{i} \rangle + \varphi_{i+2} \langle T_{i+2}, T_{i} \rangle,
\end{align*}
and after addition
\begin{align*}
2 \varphi_{i} - \varphi_{i+1} \langle T_{i+1}, T_{i} \rangle -\varphi_{i+2} \langle T_{i+2}, T_{i} \rangle = -\langle A_{i+1} + A_{i+2}, T_{i} \rangle
\end{align*}
for any $i=1,2,3$, where the subindex have to be understood modulo 3. This yields the system
\begin{align*}\left (
\begin{array}{ccc}
2 & - \langle T_{2}, T_{1} \rangle & - \langle T_{3}, T_{1} \rangle\\
- \langle T_{2}, T_{1} \rangle &2 &  - \langle T_{3}, T_{2} \rangle\\
- \langle T_{3}, T_{1} \rangle  &  - \langle T_{3}, T_{2}\rangle & 2 \\
\end{array}\right) 
\left ( 
\begin{array}{c}
\varphi_{1}\\ \varphi_{2}\\ \varphi_{3}
\end{array}
\right)
=\left(
\begin{array}{c}
-\langle A_{2} + A_{3}, T_{1} \rangle \\
-\langle A_{1} + A_{3}, T_{2} \rangle \\
-\langle A_{1} + A_{2}, T_{3} \rangle 
\end{array}
\right).
\end{align*} 
The above real and symmetric matrix  is positive definite (by Sylvester's criterion) if and only if  its determinant is strictly positive.
A straight forward  calculation gives that
\begin{align*}
\det &= 8 -2 (\langle T_{3}, T_{2} \rangle)^{2} - 2 (\langle T_{1}, T_{2} \rangle)^{2}-2 (\langle T_{3}, T_{1} \rangle)^{2} -
2 \langle T_{1}, T_{2} \rangle \langle T_{2}, T_{3} \rangle \langle T_{3}, T_{1} \rangle \\
&\geq 2 (1 - \langle T_{1}, T_{2} \rangle \langle T_{2}, T_{3} \rangle \langle T_{3}, T_{1} \rangle ) \geq 0.
\end{align*}
with equality if and only if $T_{1}=T_{2}=T_{3}$ or ($T_{i}=T_{i+1}$ and $T_{i+2}=-T_{i}$) for some $i=1,2,3$.
These degenerate situations are always excluded if we assume that
\begin{equation}\label{dim2}
dim (span \{ T_{1}, T_{2}, T_{3} \}) \geq 2.
\end{equation}
Since the inverse of the matrix is given by
\begin{align}\mathcal{J} & = \frac{1}{\det}  \left (
\begin{array}{ccc}
4- T_{23}^2&  2T_{12} + T_{13} T_{23}  & 2 T_{13}  + T_{12} T_{23} \\
 2 T_{12}+ T_{13} T_{23}  &4- T_{13}^2 &  T_{12}  T_{13} +2  T_{23} \\
 2 T_{13}  +  T_{12}  T_{23}   &  T_{12} T_{13} +2  T_{23}  & 4- T_{12}^2 \\
\end{array}\right)\label{eq:inverse} 
\end{align}
with $T_{ij}=\langle T_{i}, T_{j} \rangle$, $i,j=1,2,3$, we see for instance that
\begin{align}\nonumber
\varphi_1(0) & = -\frac{1}{\det} \left( (4- T_{23}^2)\langle A_{2} + A_{3}, T_{1} \rangle+ (2T_{12}  + T_{13}  T_{23} ) \langle A_{1} + A_{3}, T_{2} \rangle \right.\\
& \qquad \qquad \left.+(2 T_{13}  + T_{12}  T_{23} ) \langle A_{1} + A_{2}, T_{3} \rangle \right) \,, \label{phi1in0}
\end{align}
and similar formulas hold for $\varphi_2(0), \varphi_3(0)$.
\end{rem}

\begin{rem}\label{remjunctphi2}
What we have observed in Remark~\ref{remjunctphi} can be repeated for higher order conditions. This allow us to find formula for the derivatives with respect to time of the tangential components $\varphi_{i}$ at the triple junction.
More precisely if the flow is sufficiently smooth then we also have
\begin{equation}\label{Equality2}
\partial_t^{2} f_i(t,0) = \partial_t^{2} f_j(t,0) \mbox{ for all }t \in (0,T) \mbox{ and }i,j=1,2,3. 
\end{equation}
Now using \eqref{d}, \eqref{c} and the fact that $\vec{\kappa}_{i} =0$ at the junction we infer that
\begin{align}\label{Speed2}
\partial_t^{2} f_i(t,0) = -\tilde{A}_{i}(t) + \psi_{i}(t) \partial_{s} f^{i}
\end{align}
with normal component (cf. Lemma~\ref{evolcurvature} and use $\vec{\kappa}_{i} =0$)
\begin{align*}
\tilde{A}_{i}&=\tilde{A}_{i}(t) =(\nabla_{t} \nabla_{s}^{2} \vec{\kappa}_{i} - \varphi_{i} \nabla_{s} \vec{V}_{i})|_{{x=0}}\\
&= -\nabla_{s}^{6} \vec{\kappa}_{i} + P^{4,4}_{3} (\vec{\kappa}_{i}) + \lambda_{i} (\nabla_{s}^{4} \vec{\kappa}_{i} + P^{2,2}_{3} (\vec{\kappa}_{i})) + P^{2,2}_{5} (\vec{\kappa}_{i}) +  \varphi_{i} (2\nabla_{s}^{3} \vec{\kappa}_{i} -\lambda_{i} \nabla_{s} \vec{\kappa}_{i} )\\
& =     \sum_{\substack{[[a,b]] \leq [[6,1]]\\c\leq 6 \\ b \, odd}} P^{a,c}_{b} (\vec{\kappa}_{i}) + \varphi_{i}(t,0) \sum_{\substack{[[a,b]] \leq [[3,1]]\\c\leq 3 \\ b \, odd}} P^{a,c}_{b} (\vec{\kappa}_{i}) ,
\end{align*}
and
\begin{align}\label{psi2}
\psi_{i}= \partial_{t}\varphi_{i}(t,0) + \langle \nabla_{s} \vec{V}_{i}, \nabla_{s}^{2} \vec{\kappa}_{i} \rangle |_{{x=0}}.
\end{align}
For the sake of notation 
we write again  
 $$ T_{i}=T_{i}(t)= \partial_{s} f_{i}(t,0).$$ 
Using \eqref{Equality2} and \eqref{Speed2} yields that
\begin{align*}
\psi_{i}& = -\langle \tilde{A}_{i+1}, T_{i} \rangle + \psi_{i+1} \langle T_{i+1}, T_{i} \rangle,\\
\psi_{i}& = -\langle \tilde{A}_{i+2}, T_{i} \rangle + \psi_{i+2} \langle T_{i+2}, T_{i} \rangle,
\end{align*}
and after addition
\begin{align*}
2 \psi_{i} - \psi_{i+1} \langle T_{i+1}, T_{i} \rangle -\psi_{i+2} \langle T_{i+2}, T_{i} \rangle = -\langle \tilde{A}_{i+1} + \tilde{A}_{i+2}, T_{i} \rangle
\end{align*}
for any $i=1,2,3$, where the subindex have to be understood modulo 3. This yields the system
\begin{align*}\left (
\begin{array}{ccc}
2 & - \langle T_{2}, T_{1} \rangle & - \langle T_{3}, T_{1} \rangle\\
- \langle T_{2}, T_{1} \rangle &2 &  - \langle T_{3}, T_{2} \rangle\\
- \langle T_{3}, T_{1} \rangle  &  - \langle T_{3}, T_{2}\rangle & 2 \\
\end{array}\right) 
\left ( 
\begin{array}{c}
\psi_{1}\\ \psi_{2}\\ \psi_{3}
\end{array}
\right)
=\left(
\begin{array}{c}
-\langle \tilde{A}_{2} + \tilde{A}_{3}, T_{1} \rangle \\
-\langle \tilde{A}_{1} + \tilde{A}_{3}, T_{2} \rangle \\
-\langle \tilde{A}_{1} + \tilde{A}_{2}, T_{3} \rangle 
\end{array}
\right).
\end{align*} 
which we have already solved in Remark~\ref{remjunctphi}. Therefore we find again that the matrix is invertible if we assume \eqref{dim2}. 
By the expression for the inverse of the matrix given in \eqref{eq:inverse}, 
we see for instance from \eqref{psi2} that
\begin{align}\nonumber
\partial_{t}\varphi_1(0) & = - \langle \nabla_{s} \vec{V}_{i}, \nabla_{s}^{2} \vec{\kappa}_{i} \rangle |_{{x=0}} \\  \nonumber
& \quad -\frac{1}{\det} \left( (4- T_{23}^2)\langle \tilde{A}_{2} + \tilde{A}_{3}, T_{1} \rangle+ (2 T_{12}  + T_{13} T_{23} ) \langle \tilde{A}_{1} + \tilde{A}_{3}, T_{2} \rangle \right.\\
& \qquad \qquad \left.+(2 T_{13}  +  T_{12} T_{23} ) \langle \tilde{A}_{1} + \tilde{A}_{2}, T_{3} \rangle \right) \,, \label{phi1tin0}
\end{align}
and similar formulas hold for $\partial_{t}\varphi_2(0), \partial_{t}\varphi_3(0)$.

More generally an induction argument, that uses \eqref{c}, \eqref{d}, $\vec{\kappa}_{i}=0$ at the boundary, Lemma~\ref{evolcurvature}, \eqref{E4sum-odd}, and  \eqref{E4sum-even}, gives that for any $m \in \N$, $m \geq2$, at the boundary points we have
\begin{align}\label{Speedm}
\partial_{t}^{m}f_{i} =-\tilde{A}_{i}(t) + \psi_{i}(t) \partial_{s} f^{i}
\end{align}
where
\begin{align}\label{Atilde}
\tilde{A}_{i}(t)= \sum_{\substack{[[a,b]]\leq [[4m-2,1]]\\c\leq 4m-2\\b \, \, odd}}P^{a,c}_{b} (\vec{\kappa}_{i}) 
 +  \sum_{\beta \in S_{4m-2}^{m-2}} \prod_{l=0}^{m-2} ( \varphi_{i}^{(l)})^{\beta_{l}}\sum_{\substack{[[a,b]]\leq [[4m-2-|\beta|,	1]]\\c\leq 4m-2-|\beta|\\b \, \, odd}}P^{a,c}_{b} (\vec{\kappa}_{i}) 
\end{align}
and 
\begin{align*}
\psi_{i}(t)&=\partial_{t}^{m-1}\varphi_{i}(t,0)\\
& \quad +\sum_{\substack{[[a,b]]\leq [[4m-3,2]]\\c\leq 4m-3\\b \, \, even}}P^{a,c}_{b} (\vec{\kappa}_{i}) 
+ \sum_{\beta \in S_{4(m-1)-2}^{m-3}} \prod_{l=0}^{m-3} ( \varphi_{i}^{(l)})^{\beta_{l}}\sum_{\substack{[[a,b]]\leq [[4m-3-|\beta|,	2]]\\c\leq 4m-3-|\beta|\\b \, \, even}}P^{a,c}_{b} (\vec{\kappa}_{i}). 
\end{align*}
Using
\begin{equation}\label{Equalitym}
\partial_t^{m} f_i(t,0) = \partial_t^{m} f_j(t,0) \mbox{ for all }t \in (0,T) \mbox{ and }i,j=1,2,3, 
\end{equation}
and \eqref{Speedm} the same arguments as above give then for $m \in \N$, $m \geq2$
\begin{align}\nonumber
\partial_{t}^{m-1}\varphi_1(0) & = \Big(\sum_{\substack{[[a,b]]\leq [[4m-3,2]]\\c\leq 4m-3\\b \, \, even}} \hspace{-.6cm}P^{a,c}_{b} (\vec{\kappa}_{i}) 
+ \sum_{\beta \in S_{4m-6}^{m-3}} \prod_{l=0}^{m-3} ( \varphi_{i}^{(l)})^{\beta_{l}}\sum_{\substack{[[a,b]]\leq [[4m-3-|\beta|,	2]]\\c\leq 4m-3-|\beta|\\b \, \, even}}\hspace{-.7cm}P^{a,c}_{b} (\vec{\kappa}_{i}) \Big) \Big|_{{x=0}} \\  \nonumber
& \quad -\frac{1}{\det} \left( (4- T_{23}^2)\langle \tilde{A}_{2} + \tilde{A}_{3}, T_{1} \rangle+ (2T_{12}  + T_{13} T_{23} ) \langle \tilde{A}_{1} + \tilde{A}_{3}, T_{2} \rangle \right.\\
& \qquad \qquad \left.+(2 T_{13} + T_{12}  T_{23} ) \langle \tilde{A}_{1} + \tilde{A}_{2}, T_{3} \rangle \right) \,, \label{phi1tmin0}
\end{align}
and similar formulas hold for $\partial_{t}^{m-1}\varphi_2(0), \partial_{t}^{m-1}\varphi_3(0)$.
\end{rem}

Next we give estimates for the tangential components $\varphi_{i}$ ( and their time derivatives) at the triple junction.
To that end we will use repeatedly Lemma~\ref{lem:trickbdry}.

\begin{lemma}\label{lem:ordervarphi}
Let $\Gamma=\{f_1,f_2,f_3\}$ be a smooth solution of \eqref{flownetwork1}  on $(0,T) \times I$ subject to the boundary conditions \eqref{bc1}. Furthermore 
let Assumption~\ref{assnetwork} (see below) hold and assume that there exists a constant $C>0$ such that
\begin{align}\label{++} \mathcal{L}(f_{i}(t)) \geq C \mbox{ and }\| \vec{\kappa}_{i} (t)\|_{2} \leq C \mbox{ for }i=1,2,3 ,\end{align}
and any $t \in (0,T).$
 Then we have for any $\ell \in \N$ and $i=1,2,3$ that
\begin{align}\label{gio1zero}
|\varphi_i(0)| &\leq \frac{C}{\delta} \sum_{j=1}^3 \mathcal{L}[f_j]^{-3} \| \vec{\kappa}_j\|_{2}^{\frac{2\ell -1}{2(\ell+2)}}  \| \vec{\kappa}_j\|_{\ell+2,2}^{\frac{5}{2(\ell +2)}} \leq \frac{C}{\delta} \sum_{j=1}^3  \| \vec{\kappa}_j\|_{\ell+2,2}^{\frac{5}{2(\ell +2)}} \,,\\
\label{gio1Tzero}
|\partial_{t}\varphi_{i}(0)| & \leq \frac{C}{\delta}\sum_{j=1}^{3} \max
\{ 1, \| \vec{\kappa}_{j}\|_{\ell+2,2} \}^{\frac{13}{2(\ell+2)}} \qquad  \mbox{ for }\ell \geq 5.
\end{align}
More generally, we have for any $\ell \in \N$ and $i=1,2,3$ that
\begin{align}\label{giomzero}
|\partial_{t}^{m}\varphi_i(0)| & \leq C(m,\frac{1}{\delta}) \sum_{j=1}^3  \max
\{ 1, \| \vec{\kappa}_j\|_{\ell+2,2} \}^{\frac{5+8m}{2(\ell +2)}} \qquad  \mbox{ for }\ell \geq 4m+1.
\end{align}
\end{lemma}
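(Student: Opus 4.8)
The plan is to start from the closed-form expressions for $\varphi_i(0)$, $\partial_t\varphi_i(0)$ and, more generally, $\partial_t^{m}\varphi_i(0)$ derived in Remarks~\ref{remjunctphi} and~\ref{remjunctphi2}, namely \eqref{phi1in0}, \eqref{phi1tin0}, \eqref{phi1tmin0} together with \eqref{Atilde}. Up to the bounded coefficients of the inverse matrix \eqref{eq:inverse} (recall $|\langle T_i,T_j\rangle|\le1$) and the single scalar factor $1/\det$, each of these quantities is a finite linear combination of inner products $\langle\tilde A_j,T_k\rangle$, with $\tilde A_j$ as in \eqref{Atilde}, and of a ``direct'' part built from boundary values of $P$-terms in $\vec{\kappa}_i$ and of products $\prod_{l}(\varphi_i^{(l)})^{\beta_l}$ (with $\beta$ ranging in a set $S^{\bullet}_{\bullet}$ as in \eqref{DefBeta}) times such $P$-terms. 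I would first note that, by Assumption~\ref{assnetwork} together with the topological condition \eqref{dim2}, the determinant of the matrix of Remark~\ref{remjunctphi} stays uniformly bounded away from $0$ along the flow, so that $|1/\det|\le C/\delta$ for all $t\in(0,T)$. The whole proof then reduces to estimating, at $x=0$, boundary values of expressions $\prod_{l=0}^{k}(\varphi_i^{(l)})^{\beta_l}\,P^{a,c}_{b}(\vec{\kappa}_j)$ with $k<m$, the pure $P$-term case included.

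\noindent\textbf{The core pointwise bound.} For a single $P$-term I would use Lemma~\ref{lem:trickbdry} to pass from its value at $x=0$ to integrals over $I$: by \eqref{trickboundary} (if $b$ is odd) or \eqref{trickboundary2} (if $b$ is even) the integrands are $P$-terms with an even number $\ge2$ of factors, so Lemma~\ref{lemineqsh} — and, for whole sums, Lemma~\ref{lemineqshsum} — applies whenever $c+1\le\ell+2$ and $2a+1<2(\ell+2)$. Computing the interpolation exponent and, in the odd case, taking a square root, one gets in all cases
\[
 |P^{a,c}_{b}(\vec{\kappa}_j)(0)|\ \le\ C\,\mathcal{L}[f_j]^{1-a-\frac b2}\,\|\vec{\kappa}_j\|_{2}^{\,b-\frac{a+b/2}{\ell+2}}\,\|\vec{\kappa}_j\|_{\ell+2,2}^{\frac{a+b/2}{\ell+2}},
\]
so that the boundary value of a $P$-term (or a $P$-sum) of \emph{order} $\rho=a+\frac b2$ is controlled by $\|\vec{\kappa}_j\|_{\ell+2,2}^{\rho/(\ell+2)}$ up to powers of $\mathcal{L}[f_j]$ and $\|\vec{\kappa}_j\|_{2}$ which, by \eqref{++} and the boundedness of the lengths along the flow, are themselves bounded; after summing the finitely many terms this gives $\le C\max\{1,\|\vec{\kappa}_j\|_{\ell+2,2}\}^{\rho/(\ell+2)}$. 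The role of the hypothesis $\ell\ge4m+1$ is precisely to make this interpolation legitimate for the highest-order $P$-sum entering $\tilde A_j$ at level $m$, which has order $4m+\frac52$ and forces $2a+1$ as large as $8m+5<2(\ell+2)$. Applying the bound to \eqref{phi1in0}, whose only constituents are $A_j=\nabla_s^2\vec{\kappa}_j(0)=P^{2,2}_{1}(\vec{\kappa}_j)(0)$ of order $\frac52$, and using $1-\frac5{2(\ell+2)}=\frac{2\ell-1}{2(\ell+2)}$, yields \eqref{gio1zero}; the second inequality there follows by absorbing the remaining powers of $\mathcal{L}[f_j]$ and $\|\vec{\kappa}_j\|_{2}$ with \eqref{++}.

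\noindent\textbf{Induction on $m$.} I would prove \eqref{giomzero} by induction on $m$, with base case $m=0$ given by \eqref{gio1zero} (the case $m=1$ being \eqref{gio1Tzero}). Assuming the bound for all $\varphi_j^{(l)}(0)$, $l<m$, the formulas \eqref{phi1tmin0} and \eqref{Atilde} reduce the problem to estimating at $x=0$: (i) the pure $P$-sums in $\tilde A_j$ and in the direct part, of order at most $4m+\frac52$ and $4m+2$ respectively — for which the core bound gives $C\max\{1,\|\vec{\kappa}_j\|_{\ell+2,2}\}^{(8m+5)/(2(\ell+2))}$, using $\ell\ge4m+1$; and (ii) terms $\prod_{l=0}^{k}(\varphi_j^{(l)})^{\beta_l}P^{a,c}_{b}(\vec{\kappa}_j)$ with $k\le m-1$, $\beta\in S^{k}_{\bullet}$, the $P$-factor of order at most $4m+\frac52-|\beta|$ (resp.\ $4m+2-|\beta|$). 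For (ii), the inductive hypothesis gives $|\varphi_j^{(l)}(0)|\le C\max\{1,\max_p\|\vec{\kappa}_p\|_{\ell+2,2}\}^{(5+8l)/(2(\ell+2))}$ for $l\le m-1$ (legitimate since $\ell\ge4m+1\ge4l+1$), and the core bound gives $|P^{a,c}_{b}(\vec{\kappa}_j)(0)|\le C\max\{1,\|\vec{\kappa}_j\|_{\ell+2,2}\}^{(a+b/2)/(\ell+2)}$; multiplying, the exponent of $\max\{1,\cdot\}$ has numerator over $2(\ell+2)$ equal to $\sum_l\beta_l(5+8l)+2(a+\frac b2)$. The key identity — the ``algebra'' of Remark~\ref{rem:varphi} encoded in \eqref{DefBeta} — is $\sum_l\beta_l(5+8l)=2|\beta|-\sum_l\beta_l$, which together with $a+\frac b2\le 4m+\frac52-|\beta|$ makes this numerator $\le 8m+5-\sum_l\beta_l<8m+5$; since $\max\{1,\cdot\}\ge1$, such terms are absorbed into $C\max\{1,\|\vec{\kappa}_j\|_{\ell+2,2}\}^{(8m+5)/(2(\ell+2))}$. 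Summing over $i,j$ and over the finitely many $\beta$ would give \eqref{giomzero}.

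\noindent\textbf{Main obstacle.} The delicate part is the order bookkeeping in the induction: one must track, at each step, how many derivatives of $\vec{\kappa}$ a factor $\varphi_j^{(l)}$ effectively ``costs'' (namely $3+4l$, as dictated by \eqref{DefBeta}), and check that the single assumption $\ell\ge4m+1$ simultaneously (a) makes $\tilde A_j$ have the exact structure \eqref{Atilde}, (b) makes Lemma~\ref{lem:trickbdry} and Lemmas~\ref{lemineqsh}, \ref{lemineqshsum} applicable to \emph{every} $P$-term that appears (i.e.\ $c+1\le\ell+2$ and $2a+1<2(\ell+2)$ for the top-order pieces), and (c) forces all auxiliary exponents produced by the $\varphi$-products to be $<(5+8m)/(2(\ell+2))$, hence absorbable. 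Once this accounting is set up, each single estimate is a routine combination of Lemma~\ref{lem:trickbdry}, the interpolation inequalities, and \eqref{++}.
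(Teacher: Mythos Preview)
Your proposal is correct and follows essentially the same route as the paper: start from the explicit formulas \eqref{phi1in0}, \eqref{phi1tin0}, \eqref{phi1tmin0}--\eqref{Atilde}, use Assumption~\ref{assnetwork} to bound $1/\det$ by $C/\delta$, convert boundary values of $P$-terms to integrals via Lemma~\ref{lem:trickbdry}, interpolate with Lemmas~\ref{lemineqsh}--\ref{lemineqshsum}, and close by induction using the key accounting $\sum_l\beta_l(5+8l)=2|\beta|-\sum_l\beta_l<2|\beta|$. The only cosmetic differences are that the paper treats the base case $m=0$ by the slightly sharper trick $|\nabla_s^2\vec{\kappa}_j(0)|^2=|\nabla_s^2\vec{\kappa}_j(0)|^2-|\nabla_s^2\vec{\kappa}_j(1)|^2=-2\int_I\langle\nabla_s^3\vec{\kappa}_j,\nabla_s^2\vec{\kappa}_j\rangle\,ds$ (exploiting Lemma~\ref{kgood}) to get the precise length power $\mathcal{L}[f_j]^{-3}$ in \eqref{gio1zero}, and packages the final product estimate via Lemma~\ref{PQgeneral} rather than the bare $\max\{1,\cdot\}$ trick; your length exponent $1-a-\tfrac b2$ is off (it should come out as a different power after the square root), but since lengths are bounded above and below this is immaterial for the conclusion.
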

\begin{proof}
An expression for $\varphi_{i}(t,0)$ is given in Remark~\ref{remjunctphi} 
(see \eqref{phi1in0} for $\varphi_1$).  Again we write here $\varphi_i(0)$ meaning $\varphi_{i}(t,0)$ for $t \in (0,T)$.
 Using Assumption \ref{assnetwork} we find with Lemma \ref{kgood} and Lemma~\ref{lemineqsh} (for any  $\ell \geq 1$)
\begin{align*}\nonumber
|\varphi_i(0)| & \leq \frac{C}{\delta} \sum_{j=1}^3|\nabla_s^2 \vec{\kappa}_j(0)|=  \frac{C}{\delta} \sum_{j=1}^3 \left( |\nabla_s^2 \vec{\kappa}_j(0)|^2-|\nabla_s^2 \vec{\kappa}_j(1)|^2\right)^{\frac12}\\ 
& \leq \frac{C}{\delta} \sum_{j=1}^3 \left( -2 \int_0^1 \langle \nabla_s^3 \vec{\kappa}_j,\nabla_s^2 \vec{\kappa}_j\rangle \; ds \right)^{\frac12} \leq \frac{C}{\delta} \sum_{j=1}^3 \mathcal{L}[f_j]^{-3} \| \vec{\kappa}_j\|_{2}^{\frac{2\ell -1}{2(\ell+2)}}  \| \vec{\kappa}_j\|_{\ell+2,2}^{\frac{5}{2(\ell +2)}} \,.
\end{align*} 
An expression for $\partial_{t}\varphi_{i}(t,0)$ is given in Remark~\ref{remjunctphi2} 
(see for instance \eqref{phi1tin0} for $\partial_{t}\varphi_1$). 
Again using Assumption \ref{assnetwork} we find
\begin{align*}
|\partial_{t} \varphi_{i}(0)| & \leq |\langle \nabla_{s} \vec{V}_{i} ,\nabla_{s}^{2} \vec{\kappa}_{i} \rangle|_{{x=0}}| \\
& \quad + \frac{C}{\delta}\sum_{j=1}^3
\Big|   \sum_{\substack{[[a,b]] \leq [[6,1]]\\c\leq 6 \\ b \, odd}} P^{a,c}_{b} (\vec{\kappa}_{j})  \Big|_{x=0} + \varphi_{j}(t,0) \sum_{\substack{[[a,b]] \leq [[3,1]]\\c\leq 3 \\ b \, odd}} P^{a,c}_{b} (\vec{\kappa}_{j})  \Big|_{x=0} \Big|
\end{align*}
By \eqref{trickboundary}, Lemma~\ref{lemineqshsum}  and the uniform bound on length and the $L^2$-norm of the curvature we infer for any $\ell \geq 5$
\begin{align*}
\sum_{\substack{[[a,b]] \leq [[6,1]]\\c\leq 6 \\ b \, odd}} \hspace{-.5cm} |P^{a,c}_{b} (\vec{\kappa}_{j})|(0)  \leq  C \Big( \sum_{\substack{[[a,b]] \leq [[13,2]]\\c\leq 7 \\ b \geq 2}} \int_{I} |P^{a,c}_{b} (\vec{\kappa}_{j}) | ds \Big)^{\frac12} \leq C
\max
\{ 1, \| \vec{\kappa}_{j}\|_{\ell+2,2} \}^{\frac{13}{2(\ell+2)}}, 
\end{align*}
and
\begin{align*}
\sum_{\substack{[[a,b]] \leq [[3,1]]\\c\leq 3 \\ b \, odd}} \hspace{-.5cm} | P^{a,c}_{b} (\vec{\kappa}_{j}) | (0) \leq 
C \big( \sum_{\substack{[[a,b]] \leq [[7,2]]\\c\leq 4 \\ b \geq 2}} \int_{I} |P^{a,c}_{b} (\vec{\kappa}_{j}) | ds \big)^{\frac12} \leq C
\max
\{ 1, \| \vec{\kappa}_{j}\|_{\ell+2,2} \}^{\frac{7}{2(\ell+2)}}. 
\end{align*}
Also, using once again that $\nabla_{s}^{2} \vec{\kappa}_{i} (1)=0$,  by Lemma~\ref{lemineqshsum}
\begin{align*}
|\langle \nabla_{s} \vec{V}_{i} ,\nabla_{s}^{2} \vec{\kappa}_{i} \rangle|_{{x=0}} & = \sum_{\substack{[[a,b]] \leq [[5,2]]\\c\leq 3 , b \geq2, \, b \,\,even}}| P^{a,c}_{b} (\vec{\kappa}_{i}) (0)-  P^{a,c}_{b} (\vec{\kappa}_{i}) (1)| \\
& \leq \sum_{\substack{[[a,b]] \leq [6,2]]\\c\leq 4,  b \geq 2}} \int_{I} |P^{a,c}_{b} (\vec{\kappa}_{i}) | ds  \leq C
\max
\{ 1, \| \vec{\kappa}_{i}\|_{\ell+2,2} \}^{\frac{6}{\ell+2}}.
\end{align*}
Therefore we get
\begin{align*}
|\partial_{t} \varphi_{i}(0)| & \leq C \max
\{ 1, \| \vec{\kappa}_{i}\|_{\ell+2,2} \}^{\frac{6}{\ell+2}} + \frac{C}{\delta}\sum_{j=1}^{3} \max
\{ 1, \| \vec{\kappa}_{j}\|_{\ell+2,2} \}^{\frac{13}{2(\ell+2)}} \\
& +\frac{C}{\delta} \sum_{j=1}^{3} \max
\{ 1, \| \vec{\kappa}_{j}\|_{\ell+2,2} \}^{\frac{7}{2(\ell+2)}}  |\varphi_{j}(0)|
\end{align*}
Using \eqref{gio1zero}  we finally infer
\begin{align*}
|\partial_{t} \varphi_{i}(0)| & \leq C \max
\{ 1, \| \vec{\kappa}_{i}\|_{\ell+2,2} \}^{\frac{12}{2(\ell+2)}} +\frac{C}{\delta} \sum_{j=1}^{3} \max
\{ 1, \| \vec{\kappa}_{j}\|_{\ell+2,2} \}^{\frac{13}{2(\ell+2)}}\\
& \qquad + \frac{C}{\delta} \sum_{j=1}^{3} \max
\{ 1, \| \vec{\kappa}_{j}\|_{\ell+2,2} \}^{\frac{7}{2(\ell+2)}}  \sum_{r=1}^{3}\| \vec{\kappa}_{r}\|_{\ell+2,2}^{\frac{5}{2(\ell+2)}}\\
& \leq \frac{C}{\delta}\sum_{j=1}^{3} \max
\{ 1, \| \vec{\kappa}_{j}\|_{\ell+2,2} \}^{\frac{13}{2(\ell+2)}},
\end{align*}
where we have used $\|\vec{\kappa}_{r}\|_{\ell+2,2}^{\frac{5}{2(\ell+2)}} \leq \max \{1, \|\vec{\kappa}_{r}\|_{\ell+2,2} \}^{\frac{5}{2(\ell+2)}} \leq
\max \{1, \|\vec{\kappa}_{r}\|_{\ell+2,2} \}^{\frac{6}{2(\ell+2)}} $ and Young inequality $|a||b| \leq C (|a|^{p} + |b|^{q})$ with  $p=13/7$ and $q=13/6$ for the product of terms of type 
$a=\{ 1, \| \vec{\kappa}_{j}\|_{\ell+2,2} \}^{\frac{7}{2(\ell+2)}}$ and $b=\max \{1, \|\vec{\kappa}_{r}\|_{\ell+2,2} \}^{\frac{6}{2(\ell+2)}}$.
The general statement \eqref{giomzero} follows by an induction argument, that uses \eqref{phi1tmin0}, \eqref{Atilde}, Lemma~\ref{lem:trickbdry}, Lemma~\ref{lemineqshsum}, and Lemma~\ref{PQgeneral}. Let us look at the single terms. First we estimate the terms appearing in \eqref{Atilde} (where $m$ is replaced by $m+1$).
Using \eqref{trickboundary}, \eqref{++}, and Lemma~\ref{lemineqshsum} we have for $\ell \geq 4m+1$
\begin{align*}
\sum_{\substack{[[a,b]]\leq [[4m+2,1]]\\c\leq 4m+2\\b \, \, odd}} |P^{a,c}_{b} (\vec{\kappa}_{j})|(0) &\leq  C (
\sum_{\substack{[[a,b]]\leq [[8m+5,2]]\\c\leq 4m+3\\b \, \, even}}  \int_{I}|P^{a,c}_{b} (\vec{\kappa}_{j})| ds )^{1/2} \\
& \leq C\max \{1, \|\vec{\kappa}_{j}\|_{\ell+2,2} \}^{\frac{5+8m}{2(\ell+2)}}
\end{align*}
and similarly together with the  induction assumptions
\begin{align*}
 \Big| &\sum_{\beta \in S_{4m+2}^{m-1}} \prod_{l=0}^{m-1} ( \varphi_{j}^{(l)})^{\beta_{l}}\sum_{\substack{[[a,b]]\leq [[4m+2-|\beta|,	1]]\\c\leq 4m+2-|\beta|\\b \, \, odd}}P^{a,c}_{b} (\vec{\kappa}_{j}) \, \Big|(0) \\
 & \leq  C \sum_{\beta \in S_{4m+2}^{m-1}} \prod_{l=0}^{m-1} | \varphi_{j}^{(l)}|^{\beta_{l}}  \big(
\sum_{\substack{[[a,b]]\leq [[8m+5-2|\beta|,2]]\\c\leq 4m+3-|\beta|\\b \, \, even}} \int_{I}|P^{a,c}_{b} (\vec{\kappa}_{j})| ds \big)^{1/2} \\
& \leq C\sum_{\beta \in S_{4m+2}^{m-1}} \prod_{l=0}^{m-1} | \varphi_{j}^{(l)}|^{\beta_{l}}  \max \{1, \|\vec{\kappa}_{j}\|_{\ell+2,2} \}^{\frac{5+8m-2|\beta|}{2(\ell+2)}}\\
& \leq C \sum_{\beta \in S_{4m+2}^{m-1}} \prod_{l=0}^{m-1} (C(l, \frac{1}{\delta}) \sum_{k=1}^3  \max
\{ 1, \| \vec{\kappa}_k\|_{\ell+2,2} \}^{\frac{5+8l}{2(\ell +2)}})^{\beta_{l}}\max \{1, \|\vec{\kappa}_{j}\|_{\ell+2,2} \}^{\frac{5+8m-2|\beta|}{2(\ell+2)}}\\
& \leq C(m, \frac{1}{\delta}) \sum_{k=1}^3  \max
\{ 1, \| \vec{\kappa}_k\|_{\ell+2,2} \}^{\frac{5+8m}{2(\ell +2)}}
\end{align*}
where we have used Lemma~\ref{PQgeneral} with $\gamma=\frac{5+8m}{2(\ell +2)}$ (on recalling \eqref{DefBeta} note that 
$\sum_{l=0}^{m-1}\beta_{l}(5+8\ell)< \sum_{l=0}^{m-1}2\beta_{l}(3+4\ell)= 2 |\beta|$
.)
For the remaining terms in \eqref{phi1tmin0} (with $m$ replaced by $m+1$) we observe that
\begin{align*}
\Big| &\sum_{\substack{[[a,b]]\leq [[4m+1,2]]\\c\leq 4m+1\\b \, \, even}}P^{a,c}_{b} (\vec{\kappa}_{i}) 
+ \sum_{\beta \in S_{4m-2}^{m-2}} \prod_{l=0}^{m-2} ( \varphi_{i}^{(l)})^{\beta_{l}}\sum_{\substack{[[a,b]]\leq [[4m+1-|\beta|,	2]]\\c\leq 4m+1-|\beta|\\b \, \, even}}P^{a,c}_{b} (\vec{\kappa}_{i})  \Big|(0)\\
& \leq  C\max \{1, \|\vec{\kappa}_{i}\|_{\ell+2,2} \}^{\frac{8m+4}{2(\ell+2)}}  \\
& \quad +C\sum_{\beta \in S_{4m-2}^{m-2}} \prod_{l=0}^{m-2}        (C(l, \frac{1}{\delta}) \sum_{k=1}^3  \max
\{ 1, \| \vec{\kappa}_k\|_{\ell+2,2} \}^{\frac{5+8l}{2(\ell +2)}})^{\beta_{l}}  \max \{1, \|\vec{\kappa}_{i}\|_{\ell+2,2} \}^{\frac{8m+4-2|\beta|}{2(\ell+2)}}\\
& \leq C(m, \frac{1}{\delta}) \sum_{k=1}^3  \max
\{ 1, \| \vec{\kappa}_k\|_{\ell+2,2} \}^{\frac{5+8m}{2(\ell +2)}}
\end{align*}
where we have used Lemma~\ref{lem:trickbdry}, \eqref{++}, Lemma~\ref{lemineqshsum}, and Lemma~\ref{PQgeneral}. The claim now follows from  \eqref{phi1tmin0} putting all estimates together.
\end{proof}

\section{Long-time existence result} \label{sec:6}

\begin{ass}\label{assnetwork}
 We assume that on maximal interval of existence time $[0,T)$
\begin{enumerate}
\item the length of each of the curves remains uniformly bounded from below away from zero;
\item there exists a $1\geq \delta>0$ such that for any $t \in [0,T) $ there exist $i=i(t),j=j(t) \in \{1,2,3\}$ with $|\langle T_i,T_j\rangle|\leq 1-\delta$.
\end{enumerate}
\end{ass}

\begin{rem}
The first assumption is necessary to be able to use interpolation inequalities.
Under the second assumption the determinant of the matrix in Remark~\ref{remjunctphi} is bounded from below uniformly by $2 \delta$. 
Moreover $dim ( span \{ T_{1}, T_{2}, T_{3} \}) \geq 2$ for all $t \in [0,T)$.
\end{rem}

\begin{proof}[Proof of Theorem~\ref{mainthm}]
First of all let us remark that in the following the constant $C$ may change from line to line.
A short time existence result   gives that a solution $\Gamma=\{f_1,f_2,f_3\}$ exists in a small time interval, see Section \ref{ste}. In particular (see again Section \ref{ste}) the tangential components $\varphi_{i}$ grow linearly in the interior of each curves. We will give notice when this fact plays a role in the proof.

We assume by contradiction that the solution does not exist globally in time, that is there exists $0<T <\infty$ where $T$ denotes the maximal existence time. 
In view of our Assumption~\ref{assnetwork} this implies that at least one curve ceases to be smooth or regular at $t=T$.

Since \eqref{flownetwork1} is a gradient flow, the energy is decreasing in time and in particular the $L^2$-norm of the curvature is uniformly bounded in $(0,T)$. Indeed,
\begin{align*}
\max_{i=1,2,3} \| \vec{\kappa}_i\|^2_{L^2} \leq 2 \sum_{i=1}^3 \mathcal{E}_{\lambda} (f_i) \leq 2 \sum_{i=1}^3 \mathcal{E}_{\lambda} (f_{i,0}) <\infty \, .
\end{align*}
Similarly, since $\lambda_i>0$, $i=1,2,3$, the length of the curves remains uniformly bounded from above. 

\bigskip
\noindent \textbf{First Step} \underline{ Uniform estimate of $\|\nabla_s^2 \vec{\kappa}_i\|_{L^2}$ on $(0,T)$, $i=1,2,3$}.
\smallskip

We wish now to estimate the derivatives of the curvature. We use here that the normal velocity in \eqref{flownetwork1} can be written as
$$ -\nabla_{s}^{2} \vec{\kappa}_{i} + P^{0,0}_{3}(\vec{\kappa}_{i}) + \lambda_i \vec{\kappa}_i \, \quad  i=1,2,3,$$
and we simplify as much as possible the notation using the $ P_b^{a,c}(\vec{\kappa}_i)$. Using Lemma \ref{lempartint} (taking $\vec{\phi}=\nabla_s^2 \vec{\kappa_i}$) and Lemma \ref{evolcurvature}, and summing over $i$ we find
\allowdisplaybreaks{\begin{align*}
& \sum_{i=1}^3 \left(\frac{d}{dt} \frac{1}{2}\int_{I} |\nabla_s^2 \vec{\kappa}_i|^{2} ds + \int_{I}
|\nabla_s^4 \vec{\kappa}_i|^{2 } ds \right) \\ 
& \quad  = \sum_{i=1}^3 \Big( - [ \langle \nabla_s^2 \vec{\kappa}_i, \nabla_s^5 \vec{\kappa}_i \rangle ]_0^1+ [ \langle \nabla_s^3 \vec{\kappa}_i, \nabla_s^4 \vec{\kappa}_i \rangle ]_0^1  +
\int_{I} (P^{6,4}_{4} (\vec{\kappa}_i) + P^{6,2}_{4} (\vec{\kappa}_i) + P^{4,2}_{6} (\vec{\kappa}_i) )\, ds 
\Big)
\\ \nonumber
& \quad \quad + \sum_{i=1}^3 \Big(\int_{I} \langle \varphi_i \nabla_s^3 \vec{\kappa}_i + \frac{1}{2} \nabla_s^2 \vec{\kappa}_i \,  \varphi_{i,s}, \nabla_s^2 \vec{\kappa}_i\rangle ds + \lambda_i \int_{I} ( \langle \nabla_s^2 \vec{\kappa}_i, \nabla_s^4 \vec{\kappa}_i \rangle + P_4^{4,2}(\vec{\kappa}_i) )  ds \Big) \\ 
& \quad \hspace{-.5cm} = \sum_{i=1}^3 \Big( - [ \langle \nabla_s^2 \vec{\kappa}_i, \nabla_s^5 \vec{\kappa}_i \rangle ]_0^1+ [ \langle \nabla_s^3 \vec{\kappa}_i, \nabla_s^4 \vec{\kappa}_i \rangle ]_0^1 
+ \frac12 [ \varphi_i |\nabla_s^2 \vec{\kappa}_i |^2 ]_0^1  
+\sum_{\substack{[[a,b]]\leq[[6,4]]\\c\leq 4, \,  2\leq b}}  \int_I P^{a,c}_{b} (\vec{\kappa}_{i}) \, ds   \Big)
\end{align*}}
where we have used the fact that $\lambda_{i}$ are given constants.
Summing on both sides $\frac12 \int_{I} |\nabla_s^2 \vec{\kappa}_i|^{2} ds$ we find
\begin{align} \label{firstfor}
& \sum_{i=1}^3 \left(\frac{d}{dt} \frac{1}{2}\int_{I} |\nabla_s^2 \vec{\kappa}_i|^{2} ds + \frac{1}{2}\int_{I} |\nabla_s^2 \vec{\kappa}_i|^{2} ds + \int_{I}
|\nabla_s^4 \vec{\kappa}_i|^{2 } ds \right) \\ \nonumber
& \hspace{-.25cm} = \sum_{i=1}^3 \Big( - [ \langle \nabla_s^2 \vec{\kappa}_i, \nabla_s^5 \vec{\kappa}_i \rangle ]_0^1+ [ \langle \nabla_s^3 \vec{\kappa}_i, \nabla_s^4 \vec{\kappa}_i \rangle ]_0^1 
+ \frac12 [ \varphi_i |\nabla_s^2 \vec{\kappa}_i |^2 ]_0^1   
+ \hspace{-.3cm} \sum_{\substack{[[a,b]]\leq[[6,4]]\\c\leq 4, \,  2\leq b}}  \int_I P^{a,c}_{b} (\vec{\kappa}_{i}) \, ds
\Big) .
\end{align}
Using Lemma \ref{lemineqshsum} (with $A=6$, $B=4$, $C \leq 4$, $\ell=2$) one sees that  the integrals on the right hand side can be controlled as follows
\begin{align}\label{secondfor}
&\sum_{i=1}^3 \Big(\hspace{-.3cm}\sum_{\substack{[[a,b]]\leq[[6,4]]\\c\leq 4, \,  2\leq b}}  \int_I P^{a,c}_{b} (\vec{\kappa}_{i}) \, ds
\Big) 
 \leq  \varepsilon \sum_{i=1}^3  \int_0^1 |\nabla_s^4 \vec{\kappa}_i|^2 \, ds + C_{\epsilon}(\lambda_{i}, f_{i,0}, \mathcal{L}[f_i], \mathcal{E}_{\lambda_{i}}(f_{i,0}))\,
\end{align}
with $\varepsilon \in (0,1)$. Here we have used that fact that the length of the curves remains bounded away from zero. 
It remains to consider the boundary terms.

At the fixed boundary points (i.e. at $x=1$) all even derivatives of the curvature are zero by Lemma \ref{kgood} and hence in reality we have a contribution from the boundary terms only from the junction point. As one can immediately see we have high derivatives on the boundary terms but using the boundary condition as done in Lemma \ref{lemtriple} we can lower the order of the boundary terms. Let consider the three terms separately. Since $\vec{\kappa}=0$ at the boundary and at the junction point $\partial_t f_i= \partial_t f_j$ we can write
\begin{align}\label{trickStep1}
\sum_{i=1}^3 \left.  \langle -\nabla_s^2 \vec{\kappa}_i, \nabla_s^5 \vec{\kappa}_i \rangle \right|_{x=0} =  \sum_{i=1}^3  \left. \langle \partial_t f_i, \nabla_s^5 \vec{\kappa}_i \rangle \right|_{x=0} = \langle \partial_t f_1,\sum_{i=1}^3 \nabla_s^5 \vec{\kappa}_i \rangle \big|_{x=0} \, .
\end{align}
Then by Lemma \ref{lemtriple} at $x=0$ (recall that $\vec{\kappa}=0$ at $x=0$, therefore $\partial f_{i} = -\nabla_s^2 \vec{\kappa}_i + \varphi_{i} \partial_s f_i$)
\begin{align*}
& \sum_{i=1}^3  \langle -\nabla_s^2 \vec{\kappa}_i, \nabla_s^5 \vec{\kappa}_i \rangle \\
& = \langle \partial_t f_1,\sum_{i=1}^3  (P^{3,3}_3 (\vec{\kappa}_i) +2\lambda_i \nabla_s^{3} \vec{\kappa}_i - \lambda_i^2 \nabla_s \vec{\kappa}_i + \varphi_i \nabla_s^{2} \vec{\kappa}_i +(P_2^{4,3}(\vec{\kappa}_i)- \lambda_i  |\nabla_s \vec{\kappa}_i|^2)\partial_s f_i) \rangle  \\
& =  \sum_{i=1}^3 \langle \partial_t f_i ,  P^{3,3}_3 (\vec{\kappa}_i) +2\lambda_i \nabla_s^{3} \vec{\kappa}_i - \lambda_i^2 \nabla_s \vec{\kappa}_i + \varphi_i \nabla_s^{2} \vec{\kappa}_i +(P_2^{4,3}(\vec{\kappa}_i)- \lambda_i  |\nabla_s \vec{\kappa}_i|^2)\partial_s f_i\rangle   \\
& =  \sum_{i=1}^3  \big(  \sum_{\substack{[[a,b]]\leq[[5,4]]\\c\leq 3, \,  2\leq b\\ b\,  even}}   P^{a,c}_{b} (\vec{\kappa}_{i}) + \varphi_i (-|\nabla_s^{2} \vec{\kappa}_i |^2+P_2^{4,3}(\vec{\kappa}_i)- \lambda_i  |\nabla_s \vec{\kappa}_i|^2)\big)\, ,
\end{align*}
where we have used the fact that $\lambda_{i}$ are constant. By \eqref{trickboundary2} we get 
\begin{align*}
\sum_{\substack{[[a,b]]\leq[[5,4]]\\c\leq 3, \,  2\leq b\\ b\,  even}}   P^{a,c}_{b} (\vec{\kappa}_{i}) |_{x=0} \leq 
C \int_{I}\sum_{\substack{[[a,b]]\leq[[6,4]]\\c\leq 4, \,  2\leq b\\ b\,  even}}   |P^{a,c}_{b} (\vec{\kappa}_{i}) | ds \, .
\end{align*}
Thanks to the interpolation inequality \eqref{interineq2sum} (together with the uniform control on the lengths from below, see Assumption~\ref{assnetwork}) we find that the above terms can be controlled via absorbtion into the terms $\| \nabla_s^4 \vec{\kappa}_{i}\|_{L^2}^{2}$.  
Next we study the term that depends on the tangential component of the flow equation and hence is new compared to our previous studies (\cite{DLP}, \cite{DLP2}, \cite{DP}, \cite{Lin}, \cite{DP-rims}). 
The term we need to control is 
\begin{equation}\label{gab1}
\varphi_i (-|\nabla_s^{2} \vec{\kappa}_i |^2+P_2^{4,3}(\vec{\kappa}_i)- \lambda_i  |\nabla_s \vec{\kappa}_i|^2)\big|_{x=0} = \varphi_i \sum_{\substack{[[a,b]]\leq[[4,2]]\\c\leq 3, \,  2\leq b\\ b\,  even}}   P^{a,c}_{b} (\vec{\kappa}_{i}) |_{x=0}\, .
\end{equation} 
The factor $\varphi_i(0)$ can be expressed using the formulas for the tangential component in $x=0$ (see \eqref{phi1in0} for $\varphi_1$). Then by Assumption \ref{assnetwork} and  \eqref{gio1zero} (with $\ell=2$) we find
\begin{align}\label{gio1}
|\varphi_i(0)| 
\leq \frac{C}{\delta} \sum_{j=1}^3 \mathcal{L}[f_j]^{-3} \| \vec{\kappa}_j\|_{2}^{\frac38}  \| \vec{\kappa}_j\|_{4,2}^{\frac58} \,.
\end{align} 
For the other factor in \eqref{gab1} we use \eqref{trickboundary2}, Lemma \ref{lemineqshsum} and the uniform bounds on the lengths to write
\begin{align*}
\Big| \sum_{\substack{[[a,b]]\leq[[4,2]]\\c\leq 3, \,  2\leq b\\ b\,  even}}   P^{a,c}_{b} (\vec{\kappa}_{i}) |_{x=0}\, \Big| \leq C 
 \sum_{\substack{[[a,b]]\leq[[5,2]]\\c\leq 4, \,  2\leq b}}  \int_I | P^{a,c}_{b} (\vec{\kappa}_{i}) |\, ds \leq C \max\{1,\|\vec{\kappa}_i\|_{4,2}^{\frac54}\}
\end{align*}
Combining the estimates above and using the uniform bounds on the lengths we infer 
\begin{align*} 
& \Big|  \sum_{i=1}^3  \varphi_i (-|\nabla_s^{2} \vec{\kappa}_i |^2+P_2^{4,3}(\vec{\kappa}_i)+ \lambda_i  |\nabla_s \vec{\kappa}_i|^2) \big|_{x=0} \Big| \\
& \leq \sum_{i=1}^3  \Big(\sum_{j=1}^3 C(\mathcal{L}([f_j]), \delta) \max\{1, \| \vec{\kappa}_j\|_{4,2}\}^{\frac{5}{8}} \Big) 
\max\{1, \| \vec{\kappa}_i\|_{4,2}\}^{\frac{5}{4}} 
\leq \sum_{i=1}^3 C \max\{1, \| \vec{\kappa}_i\|_{4,2}\}^{\frac{15}{8}}
\end{align*}
using Young's inequality $|ab | \leq C(|a|^{p}+ |b|^{q})$ with $p=3$ and $q=\frac32$ on product terms of type $b= \max\{1, \| \vec{\kappa}_i\|_{4,2}\}^{\frac{5}{4}}$, $a= \max\{1, \| \vec{\kappa}_j\|_{4,2}\}^{\frac{5}{8}} $. Since the exponent of the factor $\| \vec{\kappa}_i\|_{4,2}$ is smaller than $2$ we can control this term.
\medskip

Proceeding similarly for the second boundary term in \eqref{firstfor} we find with Lemma \ref{lemtriple} for $i=1,2,3$
\begin{align*}
  \langle \nabla_s^3 \vec{\kappa}_i, \nabla_s^4 \vec{\kappa}_i \rangle \big|_0^1 
=  \lambda_i \langle \nabla_s^3 \vec{\kappa}_i, \nabla_s^2 \vec{\kappa}_i \rangle \big|_0^1  + \varphi_i \langle \nabla_s^3 \vec{\kappa}_i, \nabla_s \vec{\kappa}_i \rangle \big|_0^1 \,.
\end{align*}
As before with \eqref{trickboundary2} and  Lemma \ref{lemineqshsum} the first term on the right hand side can be controlled by $\| \nabla_s^4 \vec{\kappa}\|_{2}$ since $\lambda_{i}$ are constant and by Assumption~\ref{assnetwork}. The second term coincides with one of the term coming from the tangential component treated above and hence it is also controlled. 



The last boundary term in \eqref{firstfor} coincides with terms that we have already treated and hence this also behaves as $\|\vec{\kappa}_i\|_{4,2}^{\frac{15}{8}}$. By Corollary \ref{corinter} and Young's inequality we finally achieve that for $\varepsilon\in(0,1)$
 \begin{align*}
&  \sum_{i=1}^3 \Big( - [ \langle \nabla_s^2 \vec{\kappa}_i, \nabla_s^5 \vec{\kappa}_i \rangle ]_0^1+ [ \langle \nabla_s^3 \vec{\kappa}_i, \nabla_s^4 \vec{\kappa}_i \rangle ]_0^1 
+ \frac12 [ \varphi_i |\nabla_s^2 \vec{\kappa}_i |^2 ]_0^1   \Big) \\
& \qquad \leq \varepsilon \sum_{i=1}^3  \int_0^1 |\nabla_s^4 \vec{\kappa}_i|^2 \, ds + C_{\varepsilon}(\lambda_{i}, f_{i,0}, \mathcal{L}[f_i], \mathcal{E}_{\lambda_{i}}(f_{i,0}))\,.
\end{align*}
From \eqref{firstfor}, \eqref{secondfor} and the estimate above choosing $\varepsilon$ appropriately we find that
$$ \sum_{i=1}^3 \left(\frac{d}{dt} \frac{1}{2}\int_{I} |\nabla_s^2 \vec{\kappa}_i|^{2} ds + \frac{1}{2}\int_{I} |\nabla_s^2 \vec{\kappa}_i|^{2} ds \right) \leq \sum_{i=1}^3 C(\lambda, f_{i,0}, \mathcal{L}[f_i], \mathcal{E}_{\lambda_{i}}(f_{i,0})) \,,$$
and hence,  using the smoothness of the flow and of the initial data, we infer that $\|\nabla_s^2 \vec{\kappa}_i\|_{L^2} $ are uniformly bounded on $[0,T)$ for $i=1,2,3$.
\bigskip

\noindent \textbf{Second Step} \underline{Uniform estimate of $\|\nabla_s^6 \vec{\kappa}_i\|_{L^2}$ on $(0,T)$, $i=1,2,3$ for  special choice of $\varphi_{i}$} 
\smallskip 

Since equation \eqref{(P)} is of fourth order it is natural, after having controlled the second order derivative of the curvature, to control now the sixth order derivative of the curvature and then (later ) in the general step the derivative of order $2+4m$ for $m \in \mathbb{N}$. With interpolation inequalities we then get also estimates for the intermediate terms.

In this step we proceed as in the previous one applying Lemma \ref{lempartint}. The crucial point is the choice of the normal vector field $\vec{\phi}$. The main difficulties in the case of networks are the boundary terms and the tangential components. Concerning the boundary terms it is necessary to  lower their order (using the boundary conditions). Recall that since the last boundary condition (see \eqref{bc1}) involves a sum it was fundamental in \eqref{trickStep1} that $-\nabla_s^2 \vec{\kappa}_i$ is equal to the normal component of $\partial_t f_i$ at the triple junction. In order to use the same idea at this step instead of working with $\nabla_s^6 \vec{\kappa}_i$ directly we are going to work with the vector field $\nabla_t S_{1,i}$ where
$$S_{1,i}= -\nabla_s^2 \vec{\kappa}_i+ \varphi_i \partial_s f_i$$
that we call \emph{speed} (motivated by the fact that indeed at the triple junction $S_{1,i}=\partial_{t} f_{i}$). This choice is due to the fact that at the triple junction $\partial_t^2 f_i=\partial_t^2 f_j$. Moreover, since in the boundary terms scalar products with normal vector fields appear one could work with $\nabla_t \partial_t f_i$. In order to simplify further the computations we use that at the junction point $\partial_t f_i= -\nabla_s^2 \vec{\kappa}_i+ \varphi_i \partial_s f_i$. As we will see, $\nabla_t S_{1,i}$ goes like $-\nabla_s^6 \vec{\kappa}_i$ plus lower order terms.

The other difficulty is due to the tangential component characterised by the functions $\varphi_i$. These functions are needed to keep the network connected and hence play a role only at the triple junction. Because of this we have informations only at the triple junction (see Remark \ref{remjunctphi} and \ref{remjunctphi2}), more precisely on $\varphi_i(0)$ and $\partial_t^m \varphi_i(0)$ for $m \in \N$. On the other hand, when computing the evolution of several quantities, derivatives with respect to $s$ of $\varphi_i$ appear. In order to simplify the computations we make a special choice of $\varphi_i$ taking the linear interpolation between the value at the junction point, i.e. $\varphi_i(0)$, and the other boundary point where $\varphi_i(1)=0$ (since the point is kept fixed in time). More precisely, we choose to work with
\begin{align}\label{defEigvarphi}
\varphi_i(t,x) = \varphi_i(t,0) \Big( 1-\frac{1}{\mathcal{L}(f_i)} \int_0^x |\partial_x f_i| \, dx \Big), \; i=1,2,3 \,. 
\end{align}  
That this is possible has been discussed in details in Section~\ref{ste}.
As a consequence, for $i=1,2,3$ and for any $t \in(0,T)$
\begin{equation}\label{Eigvarphi}
\partial_s \varphi_i(x,t) = - \frac{1}{\mathcal{L}(f_i)}  \varphi_i(t,0) \mbox{ and } \partial_s^k \varphi_i(x,t) =0 \mbox{ for } x\in [0,1] \mbox{ for }k\geq 2\,.
\end{equation}
Moreover using \eqref{reltangpart0} we can write
\begin{align}
\partial_{t}&\varphi_{i} (t,x)= \partial_{t}\varphi_{i}(t,0) \left (1 - \frac{1}{\mathcal{L}(f_{i})} \int_{0}^{x} |(f_{i})_{x}| dx \right)
+\frac{\varphi_{i}(t,0)}{\mathcal{L}(f_{i})^{2}} (\frac{d}{dt} \mathcal{L}(f_{i}))\int_{0}^{x} |\partial_{x}f_{i}| dx \notag \\
& \quad- \frac{\varphi_{i}(t,0)}{\mathcal{L}(f_{i})} \int_{0}^{x} \langle \partial_{s}f_{i}, \partial_{x}\partial_{t}f_{i} \rangle dx,\notag\\
&=\partial_{t}\varphi_{i}(t,0) \big(1 - \frac{1}{\mathcal{L}(f_{i})} \int_{0}^{x} |\partial_{x}f_{i}| dx \big)
-\frac{\varphi_{i}(t,0)}{\mathcal{L}(f_{i})^{2}} (\varphi_{i}(t,0)+\int_{I}\langle\vec{\kappa}_{i}, \vec{V}_{i} \rangle ds)\int_{0}^{x} |\partial_{x}f_{i}| dx \notag  \\ \label{dert1varphi}
& \quad- \frac{\varphi_{i}(t,0)}{\mathcal{L}(f_{i})} \int_{0}^{x} (\partial_{s} \varphi_{i} - \langle\vec{\kappa}_{i}, \vec{V}_{i} \rangle) ds .
\end{align}
From here on we will use these special choices of $\varphi_i$'s without further notice. Observe that in the first step $\varphi_i$ were arbitrary.

We can now start with the computations. By Lemma \ref{lempartint} with $\vec{\phi}= \nabla_t S_{1,i}$ and summing over $i$ we find
\allowdisplaybreaks{\begin{align}\label{39+2}
& \frac{d}{dt} \sum_{i=1}^3  \frac{1}{2}\int_{I} |\nabla_t S_{1,i}|^{2} ds +\sum_{i=1}^3  \int_{I}
|\nabla_{s}^{2} \nabla_t S_{1,i}|^{2 } ds \\ \nonumber
& = - \sum_{i=1}^3 [ \langle \nabla_t S_{1,i}, \nabla_s^3 \nabla_t S_{1,i} \rangle ]_0^1+\sum_{i=1}^3  [ \langle \nabla_s \nabla_t S_{1,i}, \nabla_s^2 \nabla_t S_{1,i} \rangle ]_0^1
\\  \nonumber
& \qquad + \sum_{i=1}^3 \int_{I} \langle (\nabla_t +\nabla_s^4)\nabla_t S_{1,i} + \frac{1}{2} \nabla_t S_{1,i} \,  \partial_s \varphi_i, \nabla_t S_{1,i} \rangle ds - \frac{1}{2} \sum_{i=1}^3  \int_{I} |\nabla_t S_{1,i}|^{2} \langle \vec{\kappa}_i, \vec{V}_i \rangle ds .
\end{align}}
We need to compute these terms. Since 
$$\vec{V}_i   = \sum_{\substack{[[a,b]] \leq [[2,1]]\\c\leq 2,\ b \, odd}} P^{a,c}_{b} (\vec{\kappa}_{i})\, ,\qquad \quad \quad 
\langle \vec{\kappa}_i, \vec{V}_i \rangle = \sum_{\substack{[[a,b]] \leq [[2,2]]\\c\leq 2,\ b \, even}} P^{a,c}_{b} (\vec{\kappa}_{i}),$$
by Lemma \ref{evolcurvature} (with $m=2$ and $m=6$), \eqref{cs2}, \eqref{cs3}, \eqref{cs1}, \eqref{c}, \eqref{Eigvarphi}, \eqref{E2sum}, \eqref{e} we find
$$h:=\varphi_{i}^{2}, \qquad \partial_{s}h = 2 \varphi_{i} \partial_{s} \varphi_{i}, \qquad  \partial_{s}^{2}h = 2  (\partial_{s} \varphi_{i})^{2}, \qquad \partial_{s}^{m} h =0 \text{ for } m \geq 3,$$
and
\allowdisplaybreaks{\begin{align}\label{haha}
\nabla_t S_{1,i} & = \nabla_s^6 \vec{\kappa}_i + \sum_{\substack{[[a,b]] \leq [[4,3]]\\c\leq 4,\ b \, odd}} P^{a,c}_{b} (\vec{\kappa}_{i}) + \varphi_i \sum_{\substack{[[a,b]] \leq [[3,1]]\\c\leq 3,\ b \, odd}} P^{a,c}_{b} (\vec{\kappa}_{i}) + \varphi_i^2 \vec{\kappa}_{i} \, ,\\ \nonumber
\nabla_s^2 \nabla_t S_{1,i} & = \nabla_s^8 \vec{\kappa}_i + \sum_{\substack{[[a,b]] \leq [[6,3]]\\c\leq 6,\ b \, odd}} P^{a,c}_{b} (\vec{\kappa}_{i}) + \varphi_i \sum_{\substack{[[a,b]] \leq [[5,1]]\\c\leq 5,\ b \, odd}} P^{a,c}_{b} (\vec{\kappa}_{i}) \\ \nonumber
& \quad +
\partial_s \varphi_i \sum_{\substack{[[a,b]] \leq [[4,1]]\\c\leq 4,\ b \, odd}} P^{a,c}_{b} (\vec{\kappa}_{i}) + \varphi_i^2 \nabla_s^2 \vec{\kappa}_{i} + 4 \varphi_i \partial_s \varphi_i \nabla_s \vec{\kappa}_i  + 2 (\partial_s \varphi_i)^{2}\vec{\kappa}_i   \, ,\\ \nonumber
\nabla_s^4 \nabla_t S_{1,i} & = \nabla_s^{10} \vec{\kappa}_i + \sum_{\substack{[[a,b]] \leq [[8,3]]\\c\leq 8,\ b \, odd}} P^{a,c}_{b} (\vec{\kappa}_{i}) + \varphi_i \sum_{\substack{[[a,b]] \leq [[7,1]]\\c\leq 7,\ b \, odd}} P^{a,c}_{b} (\vec{\kappa}_{i}) \\ \nonumber
& \quad +
\partial_s \varphi_i \sum_{\substack{[[a,b]] \leq [[6,1]]\\c\leq 6,\ b \, odd}} P^{a,c}_{b} (\vec{\kappa}_{i})
+ \varphi_i^2 \nabla_s^4 \vec{\kappa}_{i}+  8 \varphi_i \partial_s \varphi_i \nabla_s^3 \vec{\kappa}_{i} 
+ 12 (\partial_s \varphi_i)^2 \nabla_s^2 \vec{\kappa}_i \, ,\\ \nonumber
\nabla_t \nabla_t S_{1,i} & 
= - \nabla_s^{10} \vec{\kappa}_i + \sum_{\substack{[[a,b]] \leq [[8,3]]\\c\leq 8,\ b \, odd}} P^{a,c}_{b} (\vec{\kappa}_{i})  + \varphi_i \sum_{\substack{[[a,b]] \leq [[7,1]]\\c\leq 7,\ b \, odd}} P^{a,c}_{b} (\vec{\kappa}_{i}) \\   
\nonumber
& \quad + \partial_t \varphi_i \sum_{\substack{[[a,b]] \leq [[3,1]]\\c\leq 3,\ b \, odd}} P^{a,c}_{b} (\vec{\kappa}_{i}) 
 + \varphi_i^2 \sum_{\substack{[[a,b]] \leq [[4,1]]\\c\leq 4,\ b \, odd}} P^{a,c}_{b} (\vec{\kappa}_{i}) + \varphi_i^3 \nabla_s \vec{\kappa}_{i} + 2 \varphi_i \partial_t \varphi_i \vec{\kappa}_i \, .  
\end{align}}
Due to our choice of $\varphi_i$ and since the length is uniformly bounded from below, we have (up to a constant) the same estimate from above for $|\varphi_i|$ and $|\partial_s\varphi_i|$, namely
\begin{align}\label{phiabsch}
 |\varphi_i(t,x)|\leq |\varphi(t,0)|:=|\varphi_i(0)| \mbox{ and }|\partial_s\varphi_i(t,x)|\leq C|\varphi_i(0)| \mbox{ for all  }x \in [0,1].
 \end{align} 
 Also from \eqref{dert1varphi}, the uniform bounds for the curvature derived in the first step, and interpolation inequalities we infer
 \begin{align*}
 |\partial_{t}\varphi_{i}(t,x)| \leq |\partial_{t}\varphi_{i}(t,0)|+C (|\varphi_{i}(t,0)|^{2}+ |\varphi_{i}(t,0)|)= |\partial_{t}\varphi_{i}(0)|+C(|\varphi_{i}(0)|^{2}+ |\varphi_{i}(0)|).
 \end{align*}
As a consequence we find on $[0,1]$
\allowdisplaybreaks{\begin{align*}
| (\nabla_t +\nabla_s^4)\nabla_t S_{1,i} | & \leq \sum_{\substack{[[a,b]] \leq [[8,3]]\\c\leq 8,\ b \, odd}} |P^{a,c}_{b} (\vec{\kappa}_{i}) | + |\varphi_i(0)| \sum_{\substack{[[a,b]] \leq [[7,1]]\\c\leq 7,\ b \, odd}} |P^{a,c}_{b} (\vec{\kappa}_{i})|\\
& \quad + |\varphi_i(0)|^2 \sum_{\substack{[[a,b]] \leq [[4,1]]\\c\leq 4,\ b \, odd}} |P^{a,c}_{b} (\vec{\kappa}_{i}) |+ |\varphi_i(0)|^3 \sum_{\substack{[[a,b]] \leq [[1,1]]\\c\leq 1,\ b \, odd}} |P^{a,c}_{b} (\vec{\kappa}_{i})| \\
& \quad
+ |\partial_t \varphi_i(0)| \sum_{\substack{[[a,b]] \leq [[3,1]]\\c\leq 3,\ b \, odd}} |P^{a,c}_{b} (\vec{\kappa}_{i})| + 2 |\varphi_i(0) \partial_t \varphi_i(0)| |\vec{\kappa}_i |.
\end{align*}}
Next, using the simple inequalities
\begin{align*}
|a+b|^{2}=|a|^{2} + |b|^{2} + 2a \cdot b \geq (1-\epsilon)|a|^{2}-C_{\epsilon}|b^{2}|,  \qquad (\sum a_i)^2\leq c(\sum_i a_i^2)
\end{align*}
and \eqref{phiabsch} we infer that
\begin{align*}
|\nabla_s^2\nabla_t S_{1,i} |^2 & \geq \frac{1}{2}|\nabla_s^8 \vec{\kappa}_{i}|^2-\sum_{\substack{[[a,b]] \leq [[12,6]]\\c\leq 6,\ b \, even}} |P^{a,c}_{b} (\vec{\kappa}_{i}) | \\
& \quad - |\varphi_i(0)|^{2} \sum_{\substack{[[a,b]] \leq [[10,2]]\\c\leq 5,\ b \, even}} |P^{a,c}_{b} (\vec{\kappa}_{i}) | - |\varphi_i(0)|^{4} \sum_{\substack{[[a,b]] \leq [[4,2]]\\c\leq 2,\ b \, even}} |P^{a,c}_{b} (\vec{\kappa}_{i}) |
\end{align*}
as well as
\begin{align}\label{relN6NS}
|\nabla_t S_{1,i} |^2 & \geq \frac{1}{2}|\nabla_s^6 \vec{\kappa}_{i}|^2-\sum_{\substack{[[a,b]] \leq [[8,6]]\\c\leq 4,\ b \, even}} |P^{a,c}_{b} (\vec{\kappa}_{i}) | \\
& \quad - |\varphi_i(0)|^{2} \sum_{\substack{[[a,b]] \leq [[6,2]]\\c\leq 3,\ b \, even}} |P^{a,c}_{b} (\vec{\kappa}_{i}) | - |\varphi_i(0)|^{4} \sum_{\substack{[[a,b]] \leq [[0,2]]\\c\leq 0,\ b \, even}} |P^{a,c}_{b} (\vec{\kappa}_{i}) |. \nonumber
\end{align}

From \eqref{39+2} we  get using the previous relations that 
\allowdisplaybreaks{\begin{align*}
& \frac{d}{dt} \sum_{i=1}^3  \frac{1}{2}\int_{I} |\nabla_t S_{1,i}|^{2} ds + \sum_{i=1}^3  \frac{1}{2}\int_{I} |\nabla_t S_{1,i}|^{2} ds+ \sum_{i=1}^3 \frac{1}{2} \int_{I}
|\nabla_s^8 \vec{\kappa}_{i}|^{2 } ds \\ 
& \leq - \sum_{i=1}^3 [ \langle \nabla_t S_{1,i}, \nabla_s^3 \nabla_t S_{1,i} \rangle ]_0^1+\sum_{i=1}^3  [ \langle \nabla_s \nabla_t S_{1,i}, \nabla_s^2 \nabla_t S_{1,i} \rangle ]_0^1
\\  
& \qquad + \sum_{i=1}^3  \int_{I} \sum_{\substack{[[a,b]] \leq [[14,4]]\\c\leq 8,\ b \, even}} |P^{a,c}_{b} (\vec{\kappa}_{i}) |\,  ds+ \sum_{i=1}^3 \sum_{j=1}^5|\varphi_i(0)|^j   \int_{I} \sum_{\substack{[[a,b]] \leq [[16-3j,2]]\\c\leq \min\{8,a\},\ b \, even}} |P^{a,c}_{b} (\vec{\kappa}_{i})|\,ds\\
& \qquad +  \sum_{i=1}^3 \sum_{j=0}^3 |\partial_t \varphi_i(0)| |\varphi_i(0)|^j  \int_{I}\sum_{\substack{[[a,b]] \leq [[9-3j,2]]\\c\leq \min\{6,a\},\ b \, even}} |P^{a,c}_{b} (\vec{\kappa}_{i})| \,ds .
\end{align*}}

We first estimate the integral terms. By Lemma \ref{lemineqshsum} (with $A=14$, $B=4$, $c \leq 8$, $\ell=6$) one sees that
\begin{align*}
\sum_{i=1}^3  \int_{I} \sum_{\substack{[[a,b]] \leq [[14,4]]\\c\leq 8,\ b \, even}} |P^{a,c}_{b} (\vec{\kappa}_{i}) |
 \leq  \varepsilon \sum_{i=1}^3  \int_0^1 |\nabla_s^8 \vec{\kappa}_i|^2 \, ds + C_{\epsilon}(\lambda_{i}, f_{i,0}, \mathcal{L}[f_i],  \mathcal{E}_{\lambda_{i}}(f_{i,0}))\,,
\end{align*}
with $\varepsilon \in (0,1)$. Here we have used the fact that the length of the curves remains bounded away from zero. By the same interpolation result and \eqref{gio1zero}, again with $\ell=6$, we get
\begin{align*}
\sum_{j=1}^5|\varphi_i(0)|^j \sum_{i=1}^3  \int_{I} \sum_{\substack{[[a,b]] \leq [[16-3j,2]]\\c\leq \min\{8,a\},\ b \, even}} |P^{a,c}_{b} (\vec{\kappa}_{i})| & \leq \frac{C}{\delta} \sum_{j=1}^5 \sum_{i=1}^3 \sum_{k=1}^3 \| \vec{\kappa}_k\|_{8,2}^{\frac{5j}{16}} \max\{1,\| \vec{\kappa}_i\|_{8,2}\}^{\frac{16-3j}{8}}  \, .
\end{align*}
Then using Corollary \ref{corinter}, the uniform bound on the lengths and Lemma~\ref{Younggeneral} 
we find
\begin{align*}
\sum_{j=1}^5|\varphi_i(0)|^j \sum_{i=1}^3  \int_{I} \sum_{\substack{[[a,b]] \leq [[16-3j,2]]\\c\leq \min\{8,a\},\ b \, even}} |P^{a,c}_{b} (\vec{\kappa}_{i})| 
& \leq \varepsilon \sum_{i=1}^3  \int_0^1 |\nabla_s^8 \vec{\kappa}_i|^2 \, ds + C_{\epsilon}
\,, 
\end{align*}
where $C_{\epsilon}=C_{\epsilon}(\delta,\lambda_{i}, f_{i,0}, \mathcal{L}[f_i],  \mathcal{E}_{\lambda_{i}}(f_{i,0}))$. With the same arguments and now using also \eqref{gio1Tzero} we estimate
\begin{align*}
&\sum_{i=1}^3  \sum_{j=0}^3 |\partial_t \varphi_i(0)| |\varphi_i(0)|^j \int_{I}\sum_{\substack{[[a,b]] \leq [[9-3j,2]]\\c\leq \min\{6,a\},\ b \, even}} |P^{a,c}_{b} (\vec{\kappa}_{i})| ds\\
& \leq \frac{C}{\delta^{2}} \sum_{i=1}^3  \sum_{j=0}^3 \sum_{k=1}^3 \max\{1,\| \vec{\kappa}_k\|_{8,2}\}^{\frac{13}{16}}   \sum_{l=1}^3  \| \vec{\kappa}_l\|_{8,2}^{\frac{5j}{16}}  \max\{1,\| \vec{\kappa}_i\|_{8,2}\}^{\frac{9-3j}{8}}\\
& \leq \varepsilon \sum_{i=1}^3  \int_0^1 |\nabla_s^8 \vec{\kappa}_i|^2 \, ds + C_{\epsilon}(\delta,\lambda_{i}, f_{i,0}, \mathcal{L}[f_i],  \mathcal{E}_{\lambda_{i}}(f_{i,0}))\,, 
\end{align*}
using again the general Young inequality of Lemma~\ref{Younggeneral}.
It remains to treat the boundary terms. We may write 
\begin{align*}
-\sum_{i=1}^3 [ \langle \nabla_t S_{1,i}, \nabla_s^3 \nabla_t S_{1,i} \rangle ]_0^1 = - \sum_{i=1}^3 [ \langle \partial_t S_{1,i}, \nabla_s^3 \nabla_t S_{1,i} \rangle ]_0^1 \, .
\end{align*}
Since $S_{1,i}=\partial_t f_i$ at the boundary points, the velocities and their time derivatives coincide at the  boundary points (and vanish at $x=1$ where the points are fixed in time) we find
 \begin{align*}
- \sum_{i=1}^3 [ \langle \nabla_t S_{1,i}, \nabla_s^3 \nabla_t S_{1,i} \rangle ]_0^1 =  \langle \partial_t S_{1,1}, \sum_{i=1}^3 \nabla_s^3 \nabla_t S_{1,i} \rangle \Big|_{x=0} \, .
\end{align*}
Since 
\begin{align*}
\nabla_s^3 \nabla_t S_{1,i} & = \nabla_s^9 \vec{\kappa}_i + \sum_{\substack{[[a,b]] \leq [[7,3]]\\c\leq 7,\ b \, odd}} P^{a,c}_{b} (\vec{\kappa}_{i}) + \varphi_i \sum_{\substack{[[a,b]] \leq [[6,1]]\\c\leq 6,\ b \, odd}} P^{a,c}_{b} (\vec{\kappa}_{i}) \\
& \quad + \partial_s \varphi_i \sum_{\substack{[[a,b]] \leq [[5,1]]\\c\leq 5,\ b \, odd}} P^{a,c}_{b} (\vec{\kappa}_{i}) + \varphi_i^2 \nabla_s^3 \vec{\kappa}_{i} + 6 \varphi_i \partial_s \varphi_i \nabla_s^2 \vec{\kappa}_{i}  + 6 (\partial_s \varphi_i)^2 \nabla_s \vec{\kappa}_i \, ,
\end{align*}
by \eqref{haha}, we get using \eqref{bm9} the following order reduction
\begin{align*}
\sum_{i=1}^3 \nabla_s^3 \nabla_t S_{1,i} & = \sum_{i=1}^3 \Big( \sum_{\substack{[[a,b]] \leq [[7,3]]\\c\leq 7,\ b \, odd}} P^{a,c}_{b} (\vec{\kappa}_{i}) + \varphi_i \sum_{\substack{[[a,b]] \leq [[6,1]]\\c\leq 6,\ b \, odd}} P^{a,c}_{b} (\vec{\kappa}_{i}) \\
& \quad + \partial_s \varphi_i \sum_{\substack{[[a,b]] \leq [[5,1]]\\c\leq 5,\ b \, odd}} P^{a,c}_{b} (\vec{\kappa}_{i}) 
+ 6 \varphi_i \partial_s \varphi_i \nabla_s^2 \vec{\kappa}_{i} + 6 (\partial_s \varphi_i)^2 \nabla_s \vec{\kappa}_i  \\
& \quad - \partial_t \varphi_{i} \nabla_s^2\vec{\kappa}_{i} 
+ \partial_s f_i \big( \sum_{\substack{[[a,b]] \leq [[8,2]]\\c\leq 7,\ b \, even}} P^{a,c}_{b} (\vec{\kappa}_{i})  
+ \varphi_i \sum_{\substack{[[a,b]] \leq [[5,2]]\\c\leq 4,\ b \, even}} P^{a,c}_{b} (\vec{\kappa}_{i}) 
 \big) \Big) \Big|_{x=0}\, .   
\end{align*}
Note that we can write the first term as
$$\sum_{\substack{[[a,b]] \leq [[7,3]]\\c\leq 7,\ b \, odd}} P^{a,c}_{b} (\vec{\kappa}_{i}) =  \sum_{\substack{[[a,b]] \leq [[7,3]]\\c\leq 7,\ b \, odd, b \ne 1}} P^{a,c}_{b} (\vec{\kappa}_{i}) + \sum_{\substack{[[a,b]] \leq [[7,3]]\\c\leq 7,\ b=1}} P^{a,c}_{b} (\vec{\kappa}_{i}) 
$$
and  by \eqref{embed} we can write
$$ \sum_{\substack{[[a,b]] \leq [[7,3]]\\c\leq 7,\ b \, odd, b \ne 1}}| P^{a,c}_{b} (\vec{\kappa}_{i}) | \leq  \int_{I} \sum_{\substack{[[a,b]] \leq [[8,3]]\\c\leq 8,\ b \geq 2}}| P^{a,c}_{b} (\vec{\kappa}_{i})| ds.$$
Then using Lemma~\ref{lem:trickbdry} for the other terms and  \eqref{phiabsch} we obtain
\begin{align*}
|\sum_{i=1}^3 \nabla_s^3 \nabla_t S_{1,i} |_{x=0}& \leq \sum_{i=1}^3 
\int_{I} \sum_{\substack{[[a,b]] \leq [[8,3]]\\c\leq 8,\ b \geq 2}}| P^{a,c}_{b} (\vec{\kappa}_{i})| ds\\
 & \quad + \sum_{i=1}^3 \Big( \int_I \sum_{\substack{[[a,b]] \leq [[15,2]]\\c\leq 8,\ b \, even}} \hspace{-.3cm}| P^{a,c}_{b} (\vec{\kappa}_{i})| \, ds + |\varphi_i (0)|^2 \int_I\sum_{\substack{[[a,b]] \leq [[13,2]]\\c\leq 7,\ b \, even}} \hspace{-.3cm}|P^{a,c}_{b} (\vec{\kappa}_{i})| \\
& \quad  + |\varphi_i(0)|^4 \int_I \sum_{\substack{[[a,b]] \leq [[7,2]]\\c\leq 4,\ b \, even}} \hspace{-.3cm}| P^{a,c}_{b} (\vec{\kappa}_{i})| \, ds  \Big)^{\frac12}\\
& \quad +   \sum_{i=1}^3  \Big(
\int_I \sum_{\substack{[[a,b]] \leq [[9,2]]\\c\leq 8,\ b \, even}} \hspace{-.3cm} |P^{a,c}_{b} (\vec{\kappa}_{i}) |\, ds  +  |\varphi_i(0)| \int_I \sum_{\substack{[[a,b]] \leq [[6,2]]\\c\leq 5,\ b \, even}} \hspace{-.3cm}|P^{a,c}_{b} (\vec{\kappa}_{i}) |\, ds  \Big) \\
& \quad + \sum_{i=1}^3|\partial_t \varphi_{i}(0)| |\nabla_s^2 \vec{\kappa}_i| \, . 
\end{align*}

By Lemma~\ref{lemineqshsum} and \eqref{gio1zero}, again with $\ell=6$, with the same arguments as above we see that the integrals on the right hand side can be bounded by
\begin{align*}
|\sum_{i=1}^3 \nabla_s^3 \nabla_t S_{1,i} |_{x=0}& \leq C\sum_{i=1}^3 \max\{1, \|\vec{\kappa}_i \|_{8,2}\}^{\frac{18}{16}}+ \frac{C}{\delta} \sum_{i=1}^3 \sum_{k=1}^3  \|\vec{\kappa}_k \|_{8,2}^{\frac{5}{16}} \max\{1, \|\vec{\kappa}_i \|_{8,2}\}^{\frac{13}{16}}\\
&\quad   + \frac{C}{\delta^2} \sum_{i=1}^3 \sum_{k=1}^3  \|\vec{\kappa}_k \|_{8,2}^{\frac{5}{8}} \max\{1, \|\vec{\kappa}_i \|_{8,2}\}^{\frac{7}{16}} + \sum_{i=1}^3|\partial_t \varphi_{i}(0)| |\nabla_s^2 \vec{\kappa}_i| \big|_{x=0}\, .
\end{align*}

For the last term we observe that since $\vec{\kappa}_i(1)=0$ and $\partial_t f(1)=0$ it follows $\nabla_s^2 \vec{\kappa}_i(1)=0$
and hence with Lemma \ref{lemineqshsum} 
\begin{align*}
|\nabla_s^2 \vec{\kappa}_i (0)| & = | |\nabla_s^2 \vec{\kappa}_i (1)|^2 - |\nabla_s^2 \vec{\kappa}_i (0)|^2|^{\frac12}
 = \left( 2\int_I \langle \nabla_s^2 \vec{\kappa}_i, \nabla_s^3 \vec{\kappa}_i \rangle ds  \right)^{\frac12} \leq C \max\{1, \|\vec{\kappa}_i \|_{8,2}\}^{\frac{5}{16}} \, .
\end{align*}
By \eqref{gio1Tzero} we finally get
$$  |\partial_t \varphi_{i}(0)| |\nabla_s^2 \vec{\kappa}_i|_{x=0} \leq \frac{C}{\delta} \sum_{j=1}^3 \max\{1, \|\vec{\kappa}_j \|_{8,2}\}^{\frac{13}{16}} \max\{1, \|\vec{\kappa}_i \|_{8,2}\}^{\frac{5}{16}}  \, . $$

On the other hand with \eqref{d}, Lemma~\ref{evolcurvature}, \eqref{c}, and using that $\vec{\kappa}_i=0$ at the boundary we find
\begin{align*}
\partial_t S_{1,i} (0)& = \nabla_s^6 \vec{\kappa}_i + \sum_{\substack{[[a,b]] \leq [[4,3]]\\c\leq 4,\ b \, odd}} P^{a,c}_{b} (\vec{\kappa}_{i}) + \varphi_i \sum_{\substack{[[a,b]] \leq [[3,1]]\\c\leq 3,\ b \, odd}} P^{a,c}_{b} (\vec{\kappa}_{i})\\
& \quad + \partial_t \varphi_i \partial_s f_i +\partial_s f_i \sum_{\substack{[[a,b]] \leq [[5,2]]\\c\leq 3,\ b \, even}} P^{a,c}_{b} (\vec{\kappa}_{i}) \, , 
\end{align*}
so that with Lemma~\ref{lem:trickbdry}, Lemma \ref{lemineqshsum}, \eqref{gio1zero} and \eqref{gio1Tzero} we obtain
\allowdisplaybreaks{\begin{align*}
|\partial_t S_{1,i}| (0)& \leq  \Big( \int_I \sum_{\substack{[[a,b]] \leq [[13,2]]\\c\leq 7,\ b \, even}}| P^{a,c}_{b} (\vec{\kappa}_{i})| \, ds + |\varphi_i (0)|^2 \int_I\sum_{\substack{[[a,b]] \leq [[7,2]]\\c\leq 4,\ b \, odd}} |P^{a,c}_{b} (\vec{\kappa}_{i})|  ds \Big)^{\frac12}\\
& \quad + \sum_{\substack{[[a,b]] \leq [[6,2]]\\c\leq 4,\ b \, even}} \int_{I}| P^{a,c}_{b} (\vec{\kappa}_{i})| ds + |  \partial_t \varphi_i(0)| \\
& \hspace{-.5cm}\leq C  \max\{1, \|\vec{\kappa}_i \|_{8,2}\}^{\frac{13}{16}}+ \frac{C}{\delta}\big( \sum_{k=1}^3  \|\vec{\kappa}_k \|_{8,2}^{\frac{5}{16}} \max\{1, \|\vec{\kappa}_i \|_{8,2}\}^{\frac{7}{16}} + 
\max\{1, \|\vec{\kappa}_i \|_{8,2}\}^{\frac{13}{16}}  \big) . 
\end{align*}}
Since the sum of the exponents of the several terms is smaller than $2$  we can use Lemma~\ref{Younggeneral} and we obtain
\begin{align*}
|\sum_{i=1}^3 [ \langle \nabla_t S_{1,i}, \nabla_s^3 \nabla_t S_{1,i} \rangle ]_0^1| \leq  \varepsilon \sum_{i=1}^3  \int_0^1 |\nabla_s^8 \vec{\kappa}_i|^2 \, ds + C_{\epsilon}(\delta,\lambda_{i}, f_{i,0}, \mathcal{L}[f_i], \mathcal{E}_{\lambda_{i}}(f_{i,0}))\,,
\end{align*}
with $\varepsilon \in (0,1)$. 

It remains to evaluate the last boundary term $\sum_{i=1}^3  [ \langle \nabla_s \nabla_t S_{1,i}, \nabla_s^2 \nabla_t S_{1,i} \rangle ]_0^1
$ in \eqref{39+2}. First of all note that by \eqref{haha}, $\vec{\kappa}_{i}=0$, \eqref{bm8} respectively Lemma~\ref{kgood}, we infer that at the boundary points we have (since there are some cancellations and $\vec{\kappa}_i=0$)
\begin{align*}
\nabla_s^2 \nabla_t S_{1,i}&=\sum_{\substack{[[a,b]] \leq [[6,3]]\\c\leq 6,\ b \, odd}} P^{a,c}_{b} (\vec{\kappa}_{i}) + \varphi_i \sum_{\substack{[[a,b]] \leq [[5,1]]\\c\leq 5,\ b \, odd}} P^{a,c}_{b} (\vec{\kappa}_{i}) +
\partial_s \varphi_i \sum_{\substack{[[a,b]] \leq [[4,1]]\\c\leq 4,\ b \, odd}} P^{a,c}_{b} (\vec{\kappa}_{i}) \\
& \quad  + 4 \varphi_i \partial_s \varphi_i \nabla_s \vec{\kappa}_i  -\partial_{t}\varphi_{i} \nabla_s \vec{\kappa}_i \, ,
\end{align*}
and
\begin{align*}
\nabla_{s}\nabla_t S_{1,i} & = \nabla_s^7 \vec{\kappa}_i + \sum_{\substack{[[a,b]] \leq [[5,3]]\\c\leq 5,\ b \, odd}} P^{a,c}_{b} (\vec{\kappa}_{i}) + \partial_{ s}\varphi_i \sum_{\substack{[[a,b]] \leq [[3,1]]\\c\leq 3,\ b \, odd}} P^{a,c}_{b} (\vec{\kappa}_{i}) \\& \quad 
+ \varphi_i \sum_{\substack{[[a,b]] \leq [[4,1]]\\c\leq 4,\ b \, odd}} P^{a,c}_{b} (\vec{\kappa}_{i})+\varphi_i^2 \nabla_{s}\vec{\kappa}_{i} .
\end{align*}
We estimate each term separately at the boundary $x=0$ and $x=1$. (At $x=1$ many terms do actually vanish since here $\partial_{t} \varphi_{i}=0=\varphi_{i}$, however we do not diferentiate the treatment of the terms.) We use Lemma~\ref{lem:trickbdry}, \eqref{Eigvarphi}, 
Lemma~\ref{lemineqshsum}, \eqref{gio1zero}, \eqref{gio1Tzero} to obtain at $x=0$ or $x=1$
\begin{align*}
|\nabla_s^2 \nabla_t S_{1,i} |& \leq  \Big( \int_I \sum_{\substack{[[a,b]] \leq [[13,6]]\\c\leq 7,\ b \, even}}| P^{a,c}_{b} (\vec{\kappa}_{i})| \, ds + |\varphi_i (0)|^2 \int_I\sum_{\substack{[[a,b]] \leq [[11,2]]\\c\leq 6,\ b \, even}} |P^{a,c}_{b} (\vec{\kappa}_{i})| ds \\
& \quad  + |\varphi_i(0)|^4 \int_I \sum_{\substack{[[a,b]] \leq [[3,2]]\\c\leq 2,\ b \, even}} \hspace{-.25cm}| P^{a,c}_{b} (\vec{\kappa}_{i})| \, ds   \Big)^{\frac12}+ |\partial_t \varphi_{i}(0)| (\int_I \sum_{\substack{[[a,b]] \leq [[3,2]]\\c\leq 2,\ b \, even}}\hspace{-.25cm}| P^{a,c}_{b} (\vec{\kappa}_{i})| ds)^{\frac12} \\ 
& \leq C \max\{1, \|\vec{\kappa}_i \|_{8,2}\}^{\frac{15}{16}}+ \frac{C}{\delta}  \sum_{k=1}^3  \|\vec{\kappa}_k \|_{8,2}^{\frac{5}{16}} \max\{1, \|\vec{\kappa}_i \|_{8,2}\}^{\frac{11}{16}}\\
&\quad   + \frac{C}{\delta^{2}} \sum_{k=1}^3  \|\vec{\kappa}_k \|_{8,2}^{\frac{10}{16}} \max\{1, \|\vec{\kappa}_i \|_{8,2}\}^{\frac{3}{16}} +  \frac{C}{\delta} \sum_{k=1}^3  \|\vec{\kappa}_k \|_{8,2}^{\frac{13}{16}} \max\{1, \|\vec{\kappa}_i \|_{8,2}\}^{\frac{3}{16}}\, ,
\end{align*}
and similarly
\begin{align*}
|\nabla_s \nabla_t S_{1,i} |& \leq  \Big( \int_I \sum_{\substack{[[a,b]] \leq [[15,2]]\\c\leq 8,\ b \, even}}| P^{a,c}_{b} (\vec{\kappa}_{i})| \, ds + |\varphi_i (0)|^2 \int_I\sum_{\substack{[[a,b]] \leq [[9,2]]\\c\leq 5,\ b \, even}} |P^{a,c}_{b} (\vec{\kappa}_{i})| ds \\
& \quad  + |\varphi_i(0)|^4 \int_I \sum_{\substack{[[a,b]] \leq [[3,2]]\\c\leq 2,\ b \, even}}| P^{a,c}_{b} (\vec{\kappa}_{i})| \, ds   \Big)^{\frac12}\\ 
& \leq C \max\{1, \|\vec{\kappa}_i \|_{8,2}\}^{\frac{15}{16}}+ \frac{C}{\delta}  \sum_{k=1}^3  \|\vec{\kappa}_k \|_{8,2}^{\frac{5}{16}} \max\{1, \|\vec{\kappa}_i \|_{8,2}\}^{\frac{9}{16}}\\
&\quad   + \frac{C}{\delta^{2}} \sum_{k=1}^3  \|\vec{\kappa}_k \|_{8,2}^{\frac{10}{16}} \max\{1, \|\vec{\kappa}_i \|_{8,2}\}^{\frac{3}{16}} \,. 
\end{align*}

Finally using 
 Lemma~\ref{Younggeneral}, Corollary~\ref{corinter}, and the uniform bounds on the length we obtain
\begin{align*}
|\sum_{i=1}^3  [ \langle \nabla_s \nabla_t S_{1,i}, \nabla_s^2 \nabla_t S_{1,i} \rangle ]_0^1| & \leq \varepsilon \sum_{i=1}^3  \int_0^1 |\nabla_s^8 \vec{\kappa}_i|^2 \, ds + C_{\epsilon}(\delta,\lambda_{i}, f_{i,0}, \mathcal{L}[f_i], \mathcal{E}_{\lambda_{i}}(f_{i,0}))\,.
\end{align*}
Putting all estimates together we finally obtain
\begin{align*}
& \frac{d}{dt} \sum_{i=1}^3  \frac{1}{2}\int_{I} |\nabla_t S_{1,i}|^{2} ds + \sum_{i=1}^3  \frac{1}{2}\int_{I} |\nabla_t S_{1,i}|^{2} ds+ \sum_{i=1}^3 \frac{1}{2} \int_{I}
|\nabla_s^8 \vec{\kappa}_{i}|^{2 } ds \\ 
& \leq \varepsilon \sum_{i=1}^3  \int_0^1 |\nabla_s^8 \vec{\kappa}_i|^2 \, ds + C_{\epsilon}(\delta,\lambda_{i}, f_{i,0}, \mathcal{L}[f_i],  \mathcal{E}_{\lambda_{i}}(f_{i,0}))\,.
\end{align*}
Choosing $\varepsilon$ appropriately and a Gronwall Lemma give $\sup_{(0,T)}\sum_{i=1}^3  \frac{1}{2}\int_{I} |\nabla_t S_{1,i}|^{2} ds  \leq C$, with $
C=C(\delta, \lambda_{j},f_{j,0},\mathcal{L}[f_j],\mathcal{E}_{\lambda_{j}}(f_{j,0}))$, $j=1,2,3$.
Together with \eqref{relN6NS}, Lemma~\ref{lemineqshsum} and \eqref{gio1zero} with $\ell=4$, Corollary~\ref{corinter}, the uniform bound on the lengths and Young inequality, standard arguments yield
$$\sup_{(0,T)}\sum_{i=1}^3  \int_{I} |\nabla_s ^{6}\vec{\kappa}_{i}|^{2} ds  \leq C $$ where $C=C(\delta, \lambda_{j},f_{j,0},\mathcal{L}[f_j],\mathcal{E}_{\lambda_{j}}(f_{j,0}))$, $j=1,2,3$. 
For the tangential component we derive
$$ \sup_{(0,T)}\sum_{i=1}^3 | \varphi_{i}(t,0)| +  | \partial_{s} \varphi_{i}(t,0)| \leq C \quad \mbox{ and }\quad \sup_{(0,T)}\sum_{i=1}^3 (\| \varphi_{i}\|_{L^{\infty}} +  \| \partial_{s} \varphi_{i}\|_{L^{\infty}}) \leq C $$
by \eqref{gio1zero} with $\ell=4$,  \eqref{phiabsch} and \eqref{Eigvarphi}. Again here $C$ depends on $\delta$,$ \lambda_{j}$, $f_{j,0}$, $\mathcal{L}[f_j]$ and $ \mathcal{E}_{\lambda_{j}}(f_{j,0})$
for $j=1,2,3$.

\bigskip
\noindent\textbf{Induction Step:}
In the following we denote by
\begin{align*}
S_{m,i}:= \partial_{t}^{m-1}S_{1,i} = \partial_{t}^{m-1} (- \nabla_{s}^{2} \vec{\kappa}_{i} + \varphi_{i} \partial_{s} f_{i}), \quad \qquad m \in \N. \end{align*} 
The induction hypothesis reads (for some $m \in \N$):
\begin{align}\label{IndHyp2}
\sup_{(0,T)}\sum_{i=1}^3 \int_{I} |\nabla_t S_{m,i}|^{2} ds  \leq 
C, \qquad & \sup_{(0,T)}\sum_{i=1}^3  \int_{I} |\nabla_s ^{2+4m}\vec{\kappa}_{i}|^{2} ds  \leq C 
,
\\ \label{IndHyp3}
\mbox{together with } &\sup_{(0,T)}\sum_{i=1}^3 (\sum_{k=0}^{m-1} |\partial_{t}^{k} \varphi_{i} (t,0)| ) \leq C 
,
\end{align}
where $C=C(\delta, \lambda_{j},f_{j,0}, \mathcal{L}[f_j],\mathcal{E}_{\lambda_{j}}(f_{j,0}))$ for $j=1,2,3$. Thus let us consider $S_{m+1,i}$ and derive the corresponding estimates.
By Lemma \ref{lempartint} with $\vec{\phi}= \nabla_t S_{m+1,i}$ and summing over $i$ we find
\allowdisplaybreaks{\begin{align}\label{39+3}
& \frac{d}{dt} \sum_{i=1}^3  \frac{1}{2}\int_{I} |\nabla_t S_{m+1,i}|^{2} ds +\sum_{i=1}^3  \int_{I}
|\nabla_{s}^{2} \nabla_t S_{m+1,i}|^{2 } ds \\ \nonumber
& = - \sum_{i=1}^3 [ \langle \nabla_t S_{m+1,i}, \nabla_s^3 \nabla_t S_{m+1,i} \rangle ]_0^1+\sum_{i=1}^3  [ \langle \nabla_s \nabla_t S_{m+1,i}, \nabla_s^2 \nabla_t S_{m+1,i} \rangle ]_0^1
\\  \nonumber
& \qquad + \sum_{i=1}^3 \int_{I} \langle (\nabla_t +\nabla_s^4)\nabla_t S_{m+1,i} + \frac{1}{2} \nabla_t S_{m+1,i} \,  \partial_s \varphi_i, \nabla_t S_{m+1,i} \rangle ds \\
& \qquad - \frac{1}{2} \sum_{i=1}^3  \int_{I} |\nabla_t S_{m+1,i}|^{2} \langle \vec{\kappa}_i, \vec{V}_i \rangle ds . \nonumber
\end{align}}
We need to compute these terms. 
For $m\geq 1$ we have that by  calculations similar the ones performed in Remark~\ref{remjunctphi2} (that is inductively using \eqref{c}, \eqref{d}, Lemma~\ref{evolcurvature}, Lemma~\ref{lemLin}) 
\begin{align}\label{SM+1}
S_{m+1,i}&=\partial_{t}^{m} (- \nabla_{s}^{2} \vec{\kappa}_{i} + \varphi_{i} \partial_{s} f_{i})\\
&=(-1)^{m+1} \nabla_{s}^{4m+2} \vec{\kappa}_{i}+
\sum_{\substack{[[a,b]]\leq [[4m,3]]\\c\leq 4m\\b \, \, odd}}P^{a,c}_{b} (\vec{\kappa}_{i}) 
\notag \\
 & \qquad +  \sum_{\beta \in S_{4m+3}^{m-1}} \prod_{l=0}^{m-1} ( \varphi_{i}^{(l)})^{\beta_{l}}\sum_{\substack{[[a,b]]\leq [[4m+2-|\beta|,	1]]\\c\leq 4m+2-|\beta|\\b \, \, odd}}P^{a,c}_{b} (\vec{\kappa}_{i}) \notag \\
& \quad + \partial_{s} f_{i} \Big(
\partial_{t}^{m}\varphi_{i}(t,x)
 +\sum_{\substack{[[a,b]]\leq [[4m+1,2]]\\c\leq 4m-1\\b \, \, even}}P^{a,c}_{b} (\vec{\kappa}_{i})  \notag\\
 & \qquad 
+ \sum_{\beta \in S_{4m}^{m-1}} \prod_{l=0}^{m-1} ( \varphi_{i}^{(l)})^{\beta_{l}}\sum_{\substack{[[a,b]]\leq [[4m+1-|\beta|,	2]]\\c\leq 4m+1-|\beta|\\b \, \, even}}P^{a,c}_{b} (\vec{\kappa}_{i}) \Big).  \notag
\end{align}
 
Then by Lemma~\ref{evolcurvature}, \eqref{E2sum}, \eqref{cs3}, \eqref{cs2}, and \eqref{c} we infer
\begin{align}\label{haha3}
&\nabla_{t} S_{m+1,i}
 =(-1)^{m+2}\nabla_{s}^{4(m+1)+2} \vec{\kappa}_{i}\\& \quad +
\sum_{\substack{[[a,b]]\leq [[4(m+1),3]]\\c\leq 4(m+1)\\b \, \, odd}} \hspace{-.3cm} P^{a,c}_{b} (\vec{\kappa}_{i}) 
 +  \sum_{\beta \in S_{4(m+1)+3}^{m}} \prod_{l=0}^{m} ( \varphi_{i}^{(l)})^{\beta_{l}}\sum_{\substack{[[a,b]]\leq [[4(m+1)+2-|\beta|,	1]]\\c\leq 4(m+1)+2-|\beta|\\b \, \, odd}} \hspace{-.3cm}P^{a,c}_{b} (\vec{\kappa}_{i}) , \notag\\ 
 &\nabla_{t}^{2} S_{m+1,i}
=(-1)^{m+1}\nabla_{s}^{4(m+2)+2} \vec{\kappa}_{i} \notag \\
& \quad+
\sum_{\substack{[[a,b]]\leq [[4(m+2),3]]\\c\leq 4(m+2)\\b \, \, odd}} \hspace{-.3cm}P^{a,c}_{b} (\vec{\kappa}_{i}) 
 +  \sum_{\beta \in S_{4(m+2)+3}^{m+1}} \prod_{l=0}^{m+1} ( \varphi_{i}^{(l)})^{\beta_{l}}\sum_{\substack{[[a,b]]\leq [[4(m+2)+2-|\beta|,	1]]\\c\leq 4(m+2)+2-|\beta|\\b \, \, odd}} \hspace{-.3cm}P^{a,c}_{b} (\vec{\kappa}_{i}).  \notag
\end{align}
Furthermore using \eqref{cs1} we deduce
\begin{align} \label{haha4}
&\nabla_{s}^{2}\nabla_{t} S_{m+1,i}
 =(-1)^{m}\nabla_{s}^{4m+8} \vec{\kappa}_{i}\\& \quad +
\sum_{\substack{[[a,b]]\leq [[4m+6,3]]\\c\leq 4m+6\\b \, \, odd}} \hspace{-.3cm} P^{a,c}_{b} (\vec{\kappa}_{i}) 
 +  \sum_{\beta \in S_{4(m+1)+3}^{m}} \prod_{l=0}^{m} ( \varphi_{i}^{(l)})^{\beta_{l}}\sum_{\substack{[[a,b]]\leq [[4m+8-|\beta|,	1]]\\c\leq 4m+8-|\beta|\\b \, \, odd}} \hspace{-.3cm}P^{a,c}_{b} (\vec{\kappa}_{i}) , \notag \\
 & \qquad +\sum_{\beta \in S_{4(m+1)+3}^{m}} \partial_{s} \left(\prod_{l=0}^{m} ( \varphi_{i}^{(l)})^{\beta_{l}} \right)\sum_{\substack{[[a,b]]\leq [[4(m+1)+3-|\beta|,	1]]\\c\leq 4(m+1)+3-|\beta|\\b \, \, odd}}P^{a,c}_{b} (\vec{\kappa}_{i}) \notag \\
 & \qquad \sum_{\beta \in S_{4(m+1)+3}^{m}}\partial_{s}^{2} \left( \prod_{l=0}^{m} ( \varphi_{i}^{(l)})^{\beta_{l}} \right)\sum_{\substack{[[a,b]]\leq [[4(m+1)+2-|\beta|,	1]]\\c\leq 4(m+1)+2-|\beta|\\b \, \, odd}}P^{a,c}_{b} (\vec{\kappa}_{i}). \notag
\end{align}
Using Lemma~\ref{lem:behvarphi}, where the behaviour of  the 
derivatives with respect to $s$ of time derivatives of $\varphi_{i}$
is investigated, (and the inequality $|a+b|^{2} \geq \frac{1}{2}|a|^{2} -C |b|^{2}$) we obtain
\begin{align}
|\nabla_{s}^{2}\nabla_{t} S_{m+1,i}|^{2}& \geq \frac{1}{2}|\nabla_{s}^{4m+8} \vec{\kappa}_{i}|^{2} - \sum_{\substack{[[a,b]]\leq [[8m+12,6]]\\c\leq 4m+6, \, b \, \, even}}\hspace{-.3cm} |  P^{a,c}_{b} (\vec{\kappa}_{i}) | \notag\\
& \quad-  \sum_{\beta \in S_{4(m+1)+3}^{m}} \prod_{l=0}^{m}(|\partial_{t}^{l}\varphi_{i}(t,0)| +1)^{2\beta_{l}}\sum_{\substack{[[a,b]]\leq [[8m+16-2|\beta|,	2]]\\c\leq 4m+8-|\beta|\\b \, \, even}} \hspace{-.3cm}|P^{a,c}_{b} (\vec{\kappa}_{i}) | \notag\\
& \quad-  \sum_{\beta \in S_{4(m+1)+3}^{m}} \prod_{l=0}^{m}(|\partial_{t}^{l}\varphi_{i}(t,0)| +1)^{2\beta_{l}}
\sum_{\substack{[[a,b]]\leq [[8(m+1)+6-2|\beta|,	2]]\\c\leq 4(m+1)+3-|\beta|\\b \, \, even}} \hspace{-.3cm}|P^{a,c}_{b} (\vec{\kappa}_{i})|  \notag\\
& \quad-  \sum_{\beta \in S_{4(m+1)+3}^{m}} \prod_{l=0}^{m}(|\partial_{t}^{l}\varphi_{i}(t,0)| +1)^{2\beta_{l}}
\sum_{\substack{[[a,b]]\leq [[8(m+1)+4-2|\beta|,	2]]\\c\leq 4(m+1)+2-|\beta|\\b \, \, even}} \hspace{-.3cm}|P^{a,c}_{b} (\vec{\kappa}_{i})| \notag \\
& \geq \frac{1}{2}|\nabla_{s}^{4m+8} \vec{\kappa}_{i}|^{2} - \sum_{\substack{[[a,b]]\leq [[8m+12,6]]\\c\leq 4m+6, \, b \, \, even}} |  P^{a,c}_{b} (\vec{\kappa}_{i}) | \notag\\
& \quad-  \sum_{\beta \in S_{4(m+1)+3}^{m}} \prod_{l=0}^{m}(|\partial_{t}^{l}\varphi_{i}(t,0)| +1)^{2\beta_{l}}\sum_{\substack{[[a,b]]\leq [[8m+16-2|\beta|,	2]]\\c\leq 4m+8-|\beta|\\b \, \, even}}|P^{a,c}_{b} (\vec{\kappa}_{i}) |\notag \\
& \geq \frac{1}{2}|\nabla_{s}^{4m+8} \vec{\kappa}_{i}|^{2} - \sum_{\substack{[[a,b]]\leq [[8m+12,6]]\\c\leq 4m+6, \, b \, \, even}} |  P^{a,c}_{b} (\vec{\kappa}_{i}) | \notag \\ \label{haha5}
& \quad-  |\partial_{t}^{m}\varphi_{i}(t,0)|^{2}\sum_{\substack{[[a,b]]\leq [[10,	2]]\\c\leq 5\\b \, \, even}}|P^{a,c}_{b} (\vec{\kappa}_{i}) |,
\end{align}
where we have used the induction hypothethis \eqref{IndHyp3} in the last step.
Similarly
\begin{align}\label{relSmk}
&|\nabla_{t} S_{m+1,i}|^{2} \geq \frac{1}{2}
|\nabla_{s}^{4(m+1)+2} \vec{\kappa}_{i}|^{2}  \\
& \; -
\sum_{\substack{[[a,b]]\leq [[8(m+1),6]]\\c\leq 4(m+1)\\b \, \, even}}\hspace{-.2cm}|P^{a,c}_{b} (\vec{\kappa}_{i}) |
 -  \sum_{\beta \in S_{4m+7}^{m}} \prod_{l=0}^{m} | \partial_{t}^{l}\varphi_{i}(t,0)|^{2\beta_{l}}\hspace{-.2cm} \sum_{\substack{[[a,b]]\leq [[8(m+1)+4-2|\beta|,	2]]\\c\leq 4(m+1)+2-|\beta|\\b \, \, even}} \hspace{-.2cm}| P^{a,c}_{b} (\vec{\kappa}_{i})| , \notag\\
& \quad  \geq \frac{1}{2}
|\nabla_{s}^{4(m+1)+2} \vec{\kappa}_{i}|^{2}  \notag  \\
& \quad -
\sum_{\substack{[[a,b]]\leq [[8(m+1),6]]\\c\leq 4(m+1)\\b \, \, even}}|P^{a,c}_{b} (\vec{\kappa}_{i}) |
 -  |\partial_{t}^{m}\varphi_{i}(t,0)|^{2}\sum_{\substack{[[a,b]]\leq [[6,	2]]\\c\leq 3\\b \, \, even}}|P^{a,c}_{b} (\vec{\kappa}_{i}) | , \notag
\end{align}
and again with Lemma \ref{lem:behvarphi}
\begin{align*}
|(\nabla_{t}+\nabla_{s}^{4}) \nabla_{t} S_{m+1,i}| &\leq \sum_{\substack{[[a,b]]\leq [[4m+8,3]]\\c\leq 4m+8\\b \, \, odd}}|P^{a,c}_{b} (\vec{\kappa}_{i})| \\
& \quad
 +  \sum_{\beta \in S_{4m+7}^{m}} \prod_{l=0}^{m} (1+ |\partial_{t}^{l}\varphi_{i}(t,0)|)^{\beta_{l}} \hspace{-.2cm}\sum_{\substack{[[a,b]]\leq [[4(m+1)+6-|\beta|,	1]]\\c\leq 4(m+1)+6-|\beta|\\b \, \, odd}} \hspace{-.2cm}P^{a,c}_{b} (\vec{\kappa}_{i}) \\
 &\quad +  \sum_{\beta \in S_{4(m+2)+3}^{m+1}} \prod_{l=0}^{m+1} | \varphi_{i}^{(l)}|^{\beta_{l}}\sum_{\substack{[[a,b]]\leq [[4(m+2)+2-|\beta|,	1]]\\c\leq 4(m+2)+2-|\beta|\\b \, \, odd}}|P^{a,c}_{b} (\vec{\kappa}_{i})|\\
 &\leq \sum_{\substack{[[a,b]]\leq [[4m+8,3]]\\c\leq 4m+8\\b \, \, odd}}|P^{a,c}_{b} (\vec{\kappa}_{i})| 
 + |\partial_{t}^{m}\varphi_{i}(t,0)|\sum_{\substack{[[a,b]]\leq [[7,	1]]\\c\leq 7\\b \, \, odd}}P^{a,c}_{b} (\vec{\kappa}_{i}) \\
 & + |\partial^{m+1}_{t}\varphi_{i} (t,0)| \hspace{-.2cm}\sum_{\substack{[[a,b]]\leq [[3,	1]]\\c\leq 3\\b \, \, odd}}\hspace{-.4cm}|P^{a,c}_{b} (\vec{\kappa}_{i})| + |\partial_{t}^{m}\varphi_{i}(t,0)|^{2}\hspace{-.2cm}\sum_{\substack{[[a,b]]\leq [[0,	1]]\\c\leq 0\\b \, \, odd}}\hspace{-.4cm}|P^{a,c}_{b} (\vec{\kappa}_{i})|
\end{align*}
where for the second inequality we have used the induction hypothethis \eqref{IndHyp3} and where the term mutiplying $|\partial_{t}^{m}\varphi_{i}(t,0)|^{2}$ actually appears only when $m=1$.

From \eqref{39+3} we obtain adding the term $\frac12 \int_{I} |\nabla_t S_{m+1,i}|^{2} ds$ and using the expressions derived above (and recalling \eqref{phiabsch} and the fact that  $m \geq 1$)
\allowdisplaybreaks{\begin{align}\label{39+4}
& \frac{d}{dt} \sum_{i=1}^3  \frac{1}{2}\int_{I} |\nabla_t S_{m+1,i}|^{2} ds + \sum_{i=1}^3  \frac{1}{2}\int_{I} |\nabla_t S_{m+1,i}|^{2} ds+\sum_{i=1}^3 \frac{1}{2} \int_{I}
|\nabla_{s}^{4m +8} \vec{\kappa}_{i} |^{2 } ds \\ \nonumber
& \leq - \sum_{i=1}^3 [ \langle \nabla_t S_{m+1,i}, \nabla_s^3 \nabla_t S_{m+1,i} \rangle ]_0^1+\sum_{i=1}^3  [ \langle \nabla_s \nabla_t S_{m+1,i}, \nabla_s^2 \nabla_t S_{m+1,i} \rangle ]_0^1
\\  \nonumber
& \; + \sum_{i=1}^3 \int_{I} \sum_{\substack{[[a,b]]\leq [[8m +14,	4]]\\c\leq 4m +8\\b \, \, even}}\hspace{-.2cm}|P^{a,c}_{b} (\vec{\kappa}_{i})|ds  
+\sum_{i=1}^3 |\partial_{t}^{m}\varphi_{i}(t,0)|\int_{I} \sum_{\substack{[[a,b]]\leq [[4m +13,	2]]\\c\leq  4m +8\\b \, \, even}}\hspace{-.2cm}|P^{a,c}_{b} (\vec{\kappa}_{i})|ds \notag \\
& \; +\sum_{i=1}^3 |\partial_{t}^{m}\varphi_{i}(t,0)|^{2} \int_{I}\hspace{-.1cm}\sum_{\substack{[[a,b]]\leq [[10,	2]]\\c\leq 7\\b \, \, even}}\hspace{-.45cm}|P^{a,c}_{b} (\vec{\kappa}_{i})|ds  +\sum_{i=1}^3 |\partial_{t}^{m+1}\varphi_{i}(t,0)| \int_{I} \hspace{-.1cm}\sum_{\substack{[[a,b]]\leq [[4m+9,	2]]\\c\leq 4m+6\\b \, \, even}}\hspace{-.7cm}|P^{a,c}_{b} (\vec{\kappa}_{i})|
ds  \notag\\
& \; +\sum_{i=1}^3  |\partial_{t}^{m+1}\varphi_{i}(t,0)||\partial_{t}^{m}\varphi_{i}(t,0)| \int_{I}\sum_{\substack{[[a,b]]\leq [[6,	2]]\\c\leq 3\\b \, \, even}}|P^{a,c}_{b} (\vec{\kappa}_{i})|
ds  \notag\\
& \; +\sum_{i=1}^3  |\partial_{t}^{m}\varphi_{i}(t,0)|^{2}\int_{I} \sum_{\substack{[[a,b]]\leq [[4m+6,	2]]\\c\leq 4m+6\\b \, \, even}}\hspace{-.2cm}|P^{a,c}_{b} (\vec{\kappa}_{i})| + |\partial_{t}^{m}\varphi_{i}(t,0)|^{3} \int_{I} \sum_{\substack{[[a,b]]\leq [[3,	2]]\\c\leq 3\\b \, \, even}}\hspace{-.2cm}|P^{a,c}_{b} (\vec{\kappa}_{i})| 
ds , \notag
\end{align}}
where the last two integrals appear only if $m=1$.
We first estimate the integral terms.
By Lemma~\ref{lemineqshsum} with $\ell=4m+6$ we obtain
\begin{align*}
\sum_{i=1}^3  \int_{I} \sum_{\substack{[[a,b]] \leq [[8m+14,4]]\\c\leq 4m+8,\ b \, even}} |P^{a,c}_{b} (\vec{\kappa}_{i}) |
 \leq  \varepsilon \sum_{i=1}^3  \int_0^1 |\nabla_s^{4m+8} \vec{\kappa}_i|^2 \, ds + C_{\epsilon}(\lambda_{i}, f_{i,0}, \mathcal{L}[f_i], \mathcal{E}_{\lambda_{i}}(f_{i,0}))\,,
\end{align*}
with $\varepsilon \in (0,1)$. 
By the same interpolation result and \eqref{giomzero}, again with $\ell=4m+6$, we get
\begin{align*}
&\sum_{i=1}^3 |\partial_{t}^{m}\varphi_{i}(t,0)|\int_{I} \sum_{\substack{[[a,b]]\leq [[4m +13,	2]]\\c\leq  4m +8\\b \, \, even}}|P^{a,c}_{b} (\vec{\kappa}_{i})|ds \\
& \quad +\sum_{i=1}^3 |\partial_{t}^{m}\varphi_{i}(t,0)|^{2} \int_{I}\sum_{\substack{[[a,b]]\leq [[10,	2]]\\c\leq 7\\b \, \, even}}|P^{a,c}_{b} (\vec{\kappa}_{i})|ds \\
& \quad +\sum_{i=1}^3 |\partial_{t}^{m+1}\varphi_{i}(t,0)| \int_{I} \sum_{\substack{[[a,b]]\leq [[4m+9,	2]]\\c\leq 4m+6\\b \, \, even}}|P^{a,c}_{b} (\vec{\kappa}_{i})|
ds  \\
& \quad +\sum_{i=1}^3  |\partial_{t}^{m+1}\varphi_{i}(t,0)||\partial_{t}^{m}\varphi_{i}(t,0)| \int_{I}\sum_{\substack{[[a,b]]\leq [[6,	2]]\\c\leq 3\\b \, \, even}}|P^{a,c}_{b} (\vec{\kappa}_{i})|
ds \\
& \leq C(m,\delta) \sum_{i=1}^3 \sum_{k=1}^3  \Big(  \max\{1,\| \vec{\kappa}_k\|_{4m+8,2} \}^{\frac{(5+8m)}{2(4m+8)}} \max\{1,\| \vec{\kappa}_i\|_{4m+8,2}\}^{\frac{4m+13}{4m+8}}   \\
 & \; +    \max\{1,\| \vec{\kappa}_k\|_{4m+8,2} \}^{\frac{2(5+8m)}{2(4m+8)}} \max\{1,\| \vec{\kappa}_i\|_{4m+8,2}\}^{\frac{10}{4m+8}}  \\
 & \; +   \max\{1,\| \vec{\kappa}_k\|_{4m+8,2} \}^{\frac{(13+8m)}{2(4m+8)}} \max\{1,\| \vec{\kappa}_i\|_{4m+8,2}\}^{\frac{4m+9}{4m+8}}  \\
 & \; +   \sum_{l=1}^{3}  \max\{1,\| \vec{\kappa}_k\|_{4m+8,2} \}^{\frac{(5+8m)}{2(4m+8)}} \max\{1,\| \vec{\kappa}_l\|_{4m+8,2} \}^{\frac{(13+8m)}{2(4m+8)}} \max\{1,\| \vec{\kappa}_i\|_{4m+8,2}\}^{\frac{6}{4m+8}} \Big) \\
 &\leq \varepsilon \sum_{i=1}^3  \int_0^1 |\nabla_s^{4m+8} \vec{\kappa}_i|^2 \, ds + C_{\epsilon}(\delta,\lambda_{i}, f_{i,0}, \mathcal{L}[f_i], \mathcal{E}_{\lambda_{i}}(f_{i,0}))\,,
\end{align*}
where we have used  Corollary \ref{corinter}, the uniform bound on the lengths and Lemma~\ref{Younggeneral} in the last inequality.
The last two integral terms in \eqref{39+4} have to be evaluated  only when $m=1$. In this case we calculate with the same arguments as above and \eqref{IndHyp2}
\begin{align*}
&\sum_{i=1}^3  |\partial_{t}^{m}\varphi_{i}(t,0)|^{2}\int_{I} \sum_{\substack{[[a,b]]\leq [[4m+6,	2]]\\c\leq 4m+6\\b \, \, even}}|P^{a,c}_{b} (\vec{\kappa}_{i})| + |\partial_{t}^{m}\varphi_{i}(t,0)|^{3} \int_{I} \sum_{\substack{[[a,b]]\leq [[3,	2]]\\c\leq 3\\b \, \, even}}|P^{a,c}_{b} (\vec{\kappa}_{i})| 
ds  \\
& \leq C(m,\delta)\sum_{i=1}^3 \sum_{k=1}^3  \max\{1,\| \vec{\kappa}_k\|_{4m+8,2} \}^{\frac{2(5+8m)}{2(4m+8)}} \max\{1,\| \vec{\kappa}_i\|_{4m+8,2}\}^{\frac{4m+6}{4m+8}}\\
& \quad + C(m,\delta)\sum_{i=1}^3 \sum_{k=1}^3  \max\{1,\| \vec{\kappa}_k\|_{4m+8,2} \}^{\frac{3(5+8m)}{2(4m+8)}} \max\{1,\| \vec{\kappa}_i\|_{4m+2,2}\}^{\frac{3}{4m+2}}\\
& \leq C(m,\delta)\sum_{i=1}^3 \sum_{k=1}^3  \max\{1,\| \vec{\kappa}_k\|_{4m+8,2} \}^{\frac{2(5+8m)}{2(4m+8)}} \max\{1,\| \vec{\kappa}_i\|_{4m+8,2}\}^{\frac{4m+6}{4m+8}}\\
& \quad + C(m,\delta) \sum_{i=1}^3 \sum_{k=1}^3  \max\{1,\| \vec{\kappa}_k\|_{4m+8,2} \}^{\frac{3(5+8m)}{2(4m+8)}} \\
&\leq \varepsilon \sum_{i=1}^3  \int_0^1 |\nabla_s^{4m+8} \vec{\kappa}_i|^2 \, ds + C_{\epsilon}(\delta,\lambda_{i}, f_{i,0}, \mathcal{L}[f_i], \mathcal{E}_{\lambda_{i}}(f_{i,0}))\,,
\end{align*}
where, we have appplied  Lemma~\ref{Younggeneral} and  Corollary \ref{corinter} in the last step since by  $m=1$ we have that $\frac{3(5+8m)}{2(4m+8)} <2$ and similarly $2(5+8m) +2(4m+6)< 4(4m+8)$.

It remains to treat the boundary terms.
We can write
\begin{align*}
- \sum_{i=1}^3 [ \langle \nabla_t S_{m+1,i}, \nabla_s^3 \nabla_t S_{m+1,i} \rangle ]_0^1 = - \sum_{i=1}^3 [ \langle \partial_t S_{m+1,i}, \nabla_s^3 \nabla_t S_{m+1,i} \rangle ]_0^1.
\end{align*}
Since  $  \partial_t S_{m+1,i} = \partial_{t}^{m+1} f_{i}$ at the boundary points, and  here $\partial_{t}^{m+1} f_{i}=\partial_{t}^{m+1} f_{j}$, for $i,j=1,2,3$, we find
\begin{align*}
- \sum_{i=1}^3 [ \langle \nabla_t S_{m+1,i}, \nabla_s^3 \nabla_t S_{m+1,i} \rangle ]_0^1=  \langle  \partial_t S_{m+1,1}, 
\sum_{i=1}^{3}\nabla_s^3 \nabla_t S_{m+1,i} \rangle \Big|_{x=0}.
\end{align*}
Since from \eqref{haha5}
\begin{align*}
&\nabla_{s}^{3}(\nabla_{t} S_{m+1,i})
=(-1)^{m}\nabla_{s}^{4(m+1)+5} \vec{\kappa}_{i}\\
& \quad+
\sum_{\substack{[[a,b]]\leq [[4(m+1)+3,3]]\\c\leq 4(m+1)+3\\b \, \, odd}}P^{a,c}_{b} (\vec{\kappa}_{i}) 
 +  \nabla_{s}^{3} \Big(\sum_{\beta \in S_{4(m+1)+3}^{m}} \prod_{l=0}^{m} ( \varphi_{i}^{(l)})^{\beta_{l}}\sum_{\substack{[[a,b]]\leq [[4(m+1)+2-|\beta|,	1]]\\c\leq 4(m+1)+2-|\beta|\\b \, \, odd}}\hspace{-1cm}P^{a,c}_{b} (\vec{\kappa}_{i}) \Big),
\end{align*}
we get using \eqref{b5+4m}
\begin{align*}
& \sum_{i=1}^3 \nabla_{s}^{3}(\nabla_{t} S_{m+1,i})
=
\sum_{i=1}^3 \Big(
 \sum_{\substack{[[a,b]]\leq [[5+4(m+1) -2,3]]\\c\leq 5+4(m+1)-2\\b \, \, odd}}P^{a,c}_{b} (\vec{\kappa}_{i}) 
 \\
 & \; +\sum_{\beta \in S_{4+4(m+1)}^{m+1}} \prod_{l=0}^{m+1} ( \varphi_{i}^{(l)})^{\beta_{l}}\sum_{\substack{[[a,b]]\leq [[5+4(m+1)-|\beta|,1]]\\c\leq 5+4(m+1)-|\beta|\\b \, \, odd}}P^{a,c}_{b} (\vec{\kappa}_{i}) 
 \\
 & \; +  \partial_{s}f_{i} \Big[    \sum_{\substack{[[a,b]]\leq [[4+4(m+1),2]]\\c\leq 3+4(m+1)\\b \, \, even}} 
 P^{a,c}_{b} (\vec{\kappa}_{i}) 
 +  \sum_{\beta \in S_{ 4(m+1)}^{m}} \prod_{l=0}^{m} ( \varphi_{i}^{(l)})^{\beta_{l}}\sum_{\substack{[[a,b]]\leq [[4+4(m+1)-|\beta|,2]]\\c\leq 3+4(m+1)-|\beta|\\b \, \, even}}\hspace{-1cm}P^{a,c}_{b} (\vec{\kappa}_{i}) 
  \Big ]    \Big) 
\\
& \;+\sum_{i=1}^3 \Big(
\sum_{\substack{[[a,b]]\leq [[4(m+1)+3,3]]\\c\leq 4(m+1)+3\\b \, \, odd}}\hspace{-1cm}P^{a,c}_{b} (\vec{\kappa}_{i}) 
 +  \nabla_{s}^{3} \Big(\sum_{\beta \in S_{4(m+1)+3}^{m}} \prod_{l=0}^{m} ( \varphi_{i}^{(l)})^{\beta_{l}}\sum_{\substack{[[a,b]]\leq [[4(m+1)+2-|\beta|,	1]]\\c\leq 4(m+1)+2-|\beta|\\b \, \, odd}}\hspace{-1cm}P^{a,c}_{b} (\vec{\kappa}_{i}) \Big)\Big). 
\end{align*}
Therefore we infer using \eqref{IndHyp3} and Lemma~\ref{lem:behvarphi}
\begin{align*}
& \Big|\sum_{i=1}^3 \nabla_{s}^{3}(\nabla_{t} S_{m+1,i}) \Big|_{x=0}
\leq \sum_{i=1}^3  \Big( \Big|\sum_{\substack{[[a,b]]\leq [[4(m+1) +3,3]]\\c\leq 4(m+1)+3\\b \, \, odd}}P^{a,c}_{b} (\vec{\kappa}_{i}) \Big| 
\\
& \quad+ |\partial_{t}^{m+1} \varphi_{i}(t,0)|\sum_{\substack{[[a,b]]\leq [[2,1]]\\c\leq 2\\b \, \, odd}}|P^{a,c}_{b} (\vec{\kappa}_{i})| + |\partial_{t}^{m} \varphi_{i}(t,0)|\sum_{\substack{[[a,b]]\leq [[6,1]]\\c\leq 6\\b \, \, odd}}|P^{a,c}_{b} (\vec{\kappa}_{i})|\\
& \quad + \sum_{\substack{[[a,b]]\leq [[4+4(m+1),2]]\\c\leq 3+4(m+1)\\b \, \, even}} 
 |P^{a,c}_{b} (\vec{\kappa}_{i})| + |\partial_{t}^{m} \varphi_{i}(t,0)|\sum_{\substack{[[a,b]]\leq [[5,2]]\\c\leq 5\\b \, \, even}} 
 |P^{a,c}_{b} (\vec{\kappa}_{i})|
\Big).
\end{align*}
We write
\begin{align*}   \sum_{\substack{[[a,b]]\leq [[4(m+1) +3,3]]\\c\leq 4(m+1)+3\\b \, \, odd}}\hspace{-.25cm}P^{a,c}_{b} (\vec{\kappa}_{i})& 
\sum_{\substack{[[a,b]]\leq [[4(m+1) +3,3]]\\c\leq 4(m+1)+3\\b \, \, odd, b \ne 1}}\hspace{-.25cm}P^{a,c}_{b} (\vec{\kappa}_{i}) +\sum_{\substack{[[a,b]]\leq [[4(m+1) +3,3]]\\c\leq 4(m+1)+3\\b=1}}\hspace{-.25cm}P^{a,c}_{b} (\vec{\kappa}_{i})
  \end{align*}
and estimate by \eqref{embed}
\begin{align*} 
\sum_{\substack{[[a,b]]\leq [[4(m+1) +3,3]]\\c\leq 4(m+1)+3\\b \, \, odd, b \ne 1}}P^{a,c}_{b} (\vec{\kappa}_{i})  & \leq \int_{I} \sum_{\substack{[[a,b]]\leq [[4(m+1) +4,3]]\\c\leq 4(m+1)+4\\b \geq 2 }}|P^{a,c}_{b} (\vec{\kappa}_{i}) | ds. 
\end{align*}
Next, using Lemma~\ref{lem:trickbdry}, Lemma~\ref{lemineqshsum} with $\ell=4m+6$, and \eqref{giomzero} we derive
\begin{align*}
& \Big|\sum_{i=1}^3 \nabla_{s}^{3}(\nabla_{t} S_{m+1,i}) \Big|_{x=0}
\leq  \sum_{i=1}^3 \Big(
\int_{I} \sum_{\substack{[[a,b]]\leq [[4(m+1) +4,3]]\\c\leq 4(m+1)+4\\b \geq 2 }}|P^{a,c}_{b} (\vec{\kappa}_{i}) | ds \\
& \quad + \big(\int_{I}  \sum_{\substack{[[a,b]]\leq [[8(m+1) +7,2]]\\c\leq 4(m+1)+4\\b \, even }}\hspace{-.7cm}|P^{a,c}_{b} (\vec{\kappa}_{i}) |   ds \big)^{\frac12} + |\partial_{t}^{m+1} \varphi_{i}(t,0)| \big(\int_{I}  \sum_{\substack{[[a,b]]\leq [[5,2]]\\c\leq 3\\b \, even }}\hspace{-.25cm}|P^{a,c}_{b} (\vec{\kappa}_{i}) |   ds \big)^{\frac12}\\
& \quad +  |\partial_{t}^{m} \varphi_{i}(t,0)| \big(\int_{I}  \sum_{\substack{[[a,b]]\leq [[13,2]]\\c\leq 7\\b \, even }}\hspace{-.25cm}|P^{a,c}_{b} (\vec{\kappa}_{i}) |   ds \big)^{\frac12}  + \int_{I} \sum_{\substack{[[a,b]]\leq [[4(m+1) +5,2]]\\c\leq 4(m+1)+4\\b \, even }}\hspace{-.25cm}|P^{a,c}_{b} (\vec{\kappa}_{i}) | ds\\
& \quad + |\partial_{t}^{m} \varphi_{i}(t,0)| \int_{I} \sum_{\substack{[[a,b]]\leq [[6,2]]\\c\leq 6 \\b \, even }}|P^{a,c}_{b} (\vec{\kappa}_{i}) | ds \Big)\\
& \leq
 \sum_{i=1}^3  \Big( \max\{1,\| \vec{\kappa}_i\|_{4m+8,2}\}^{\frac{8m+18}{2(4m+8)}} \\
 &\quad+ C(m ,\delta)\sum_{k=1}^{3}   \max\{1,\| \vec{\kappa}_k\|_{4m+8,2}\}^{\frac{8m+13}{2(4m+8)}}    \max\{1,\| \vec{\kappa}_i\|_{4m+8,2}\}^{\frac{5}{2(4m+8)}}\\
 & \quad + C(m ,\delta)\sum_{k=1}^{3}   \max\{1,\| \vec{\kappa}_k\|_{4m+8,2}\}^{\frac{8m+5}{2(4m+8)}}    \max\{1,\| \vec{\kappa}_i\|_{4m+8,2}\}^{\frac{13}{2(4m+8)}}
\Big).
\end{align*}
Next we observe that by \eqref{SM+1} we have
\begin{align*}
\partial_{t}S_{m+1,i}&= S_{m+2,i}=\partial_{t}^{m+1} (- \nabla_{s}^{2} \vec{\kappa}_{i} + \varphi_{i} \partial_{s} f_{i})\\
&=(-1)^{m} \nabla_{s}^{4(m+1)+2} \vec{\kappa}_{i}+
\sum_{\substack{[[a,b]]\leq [[4(m+1),3]]\\c\leq 4(m+1)\\b \, \, odd}}P^{a,c}_{b} (\vec{\kappa}_{i}) \\& \quad 
 +  \sum_{\beta \in S_{4(m+1)+3}^{m}} \prod_{l=0}^{m} ( \varphi_{i}^{(l)})^{\beta_{l}}\sum_{\substack{[[a,b]]\leq [[4(m+1)+2-|\beta|,	1]]\\c\leq 4(m+1)+2-|\beta|\\b \, \, odd}}P^{a,c}_{b} (\vec{\kappa}_{i})  \\
& \quad + \partial_{s} f_{i} \Big(
\partial_{t}^{m+1}\varphi_{i}(t,0)
 +\sum_{\substack{[[a,b]]\leq [[4(m+1)+1,2]]\\c\leq 4(m+1)-1\\b \, \, even}}P^{a,c}_{b} (\vec{\kappa}_{i})  \notag\\
 & \quad 
+ \sum_{\beta \in S_{4(m+1)}^{m}} \prod_{l=0}^{m} ( \varphi_{i}^{(l)})^{\beta_{l}}\sum_{\substack{[[a,b]]\leq [[4(m+1)+1-|\beta|,	2]]\\c\leq 4(m+1)+1-|\beta|\\b \, \, even}}P^{a,c}_{b} (\vec{\kappa}_{i}) \Big).  
\end{align*}
so that by Lemma~\ref{lem:behvarphi}, Lemma \ref{lem:trickbdry}, Lemma~\ref{lemineqshsum} with $\ell=4m+6$, and \eqref{giomzero}, we infer
\begin{align*}
|\partial_{t}S_{m+1,i}(0)| &\leq \big(\int_{I}  \sum_{\substack{[[a,b]]\leq [[8m+13,2]]\\c\leq 4m+7\\b \, even }}\hspace{-.55cm}|P^{a,c}_{b} (\vec{\kappa}_{i}) |   ds \big)^{\frac12} + |\partial_{t}^{m} \varphi_{i}(t,0)| \big(\int_{I}  \sum_{\substack{[[a,b]]\leq [[7,2]]\\c\leq 7\\b \, even }}\hspace{-.35cm}|P^{a,c}_{b} (\vec{\kappa}_{i}) |   ds \big)^{\frac12} \\
& \; + |\partial_{t}^{m+1} \varphi_{i}(t,0)| + \int_{I}  \sum_{\substack{[[a,b]]\leq [[4m+6,2]]\\c\leq 4m+4\\b \, even }}|P^{a,c}_{b} (\vec{\kappa}_{i}) |   ds \\
& \; + |\partial_{t}^{m} \varphi_{i}(t,0)| \int_{I}  \sum_{\substack{[[a,b]]\leq [[3,2]]\\c\leq 3\\b \, even }}|P^{a,c}_{b} (\vec{\kappa}_{i}) |   ds \\
& \leq  \max\{1,\| \vec{\kappa}_i\|_{4m+8,2}\}^{\frac{8m+13}{2(4m+8)}} \\
 &\,+ C(m ,\delta)\sum_{k=1}^{3}   \max\{1,\| \vec{\kappa}_k\|_{4m+8,2}\}^{\frac{8m+13}{2(4m+8)}}  +  \max\{1,\| \vec{\kappa}_i\|_{4m+8,2}\}^{\frac{8m+12}{2(4m+8)}}\\
 & \; + C(m ,\delta)\sum_{k=1}^{3}   \max\{1,\| \vec{\kappa}_k\|_{4m+8,2}\}^{\frac{8m+5}{2(4m+8)}}    \max\{1,\| \vec{\kappa}_i\|_{4m+8,2}\}^{\frac{7}{2(4m+8)}}.
\end{align*}
Putting all estimates together, using Lemma~\ref{Younggeneral} and Corollary \ref{corinter} we obtain
\begin{align*}
|- \sum_{i=1}^3 [ \langle \nabla_t S_{m+1,i}, \nabla_s^3 \nabla_t S_{m+1,i} \rangle ]_0^1 | \leq \varepsilon \sum_{i=1}^3  \int_0^1 |\nabla_s^{4m+8} \vec{\kappa}_i|^2 \, ds + C_{\epsilon},
\end{align*}
with $C_{\epsilon}=C_{\epsilon}(\delta, \lambda_{i}, f_{i,0}, \mathcal{L}[f_i],\mathcal{E}_{\lambda_{i}}(f_{i,0}))$. 

It remains to evaluate the last boundary term 
$\sum_{i=1}^3  [ \langle \nabla_s \nabla_t S_{m+1,i}, \nabla_s^2 \nabla_t S_{m+1,i} \rangle ]_0^1. $
First of all note that by \eqref{haha3}, Lemma~\ref{lem:behvarphi}, Lemma~\ref{lem:trickbdry}, Lemma~\ref{lemineqshsum} with $\ell=4m+6$, and \eqref{giomzero},
we have for $x \in I$
\begin{align*}
&|\nabla_{s}\nabla_{t} S_{m+1,i}|
 = |(-1)^{m+2}\nabla_{s}^{4(m+1)+3} \vec{\kappa}_{i}\\
 & \quad +
\sum_{\substack{[[a,b]]\leq [[4(m+1) +1,3]]\\c\leq 4(m+1)+1\\b \, \, odd}}\hspace{-.5cm}P^{a,c}_{b} (\vec{\kappa}_{i}) 
 +  \sum_{\beta \in S_{4(m+1)+3}^{m}}\partial_{s} ( \prod_{l=0}^{m} ( \varphi_{i}^{(l)})^{\beta_{l}} )\sum_{\substack{[[a,b]]\leq [[4(m+1)+2-|\beta|,	1]]\\c\leq 4(m+1)+2-|\beta|\\b \, \, odd}}\hspace{-.5cm} P^{a,c}_{b} (\vec{\kappa}_{i}) \\
& \quad +\sum_{\beta \in S_{4(m+1)+3}^{m}} \prod_{l=0}^{m} ( \varphi_{i}^{(l)})^{\beta_{l}} \sum_{\substack{[[a,b]]\leq [[4(m+1)+3-|\beta|,	1]]\\c\leq 4(m+1)+3-|\beta|\\b \, \, odd}}P^{a,c}_{b} (\vec{\kappa}_{i}) \, \, | \\
& \leq \sum_{\substack{[[a,b]]\leq [[4(m+1) +3,1]]\\c\leq 4(m+1)+3\\b \, \, odd}} \hspace{-.6cm}|P^{a,c}_{b} (\vec{\kappa}_{i})| 
+\sum_{\beta \in S_{4(m+1)+3}^{m}} \prod_{l=0}^{m} |\partial_{t}^{l} \varphi_{i}(t,0)|^{\beta_{l}} \sum_{\substack{[[a,b]]\leq [[4(m+1)+3-|\beta|,	1]]\\c\leq 4(m+1)+3-|\beta|\\b \, \, odd}}\hspace{-.6cm}|P^{a,c}_{b} (\vec{\kappa}_{i})|\\
& \leq \Big(\int_{I}\sum_{\substack{[[a,b]]\leq [[8(m+1) +7,2]]\\c\leq 4(m+1)+4\\b \, \, even}} |P^{a,c}_{b} (\vec{\kappa}_{i})| ds \Big) ^{\frac12}
+  |\partial_{t}^{m} \varphi_{i}(t,0)|  \Big( \int_{I} \sum_{\substack{[[a,b]]\leq [[9,	2]]\\c\leq 5\\b \, \, even}}|P^{a,c}_{b} (\vec{\kappa}_{i})| ds \Big )^{\frac12}\\
&  \leq  \max\{1,\| \vec{\kappa}_i\|_{4m+8,2}\}^{\frac{8m+15}{2(4m+8)}} \\
 &\quad+ C(m ,\delta)\sum_{k=1}^{3}   \max\{1,\| \vec{\kappa}_k\|_{4m+8,2}\}^{\frac{8m +5}{2(4m+8)}}   \max\{1,\| \vec{\kappa}_i\|_{4m+8,2}\}^{\frac{9}{2(4m+8)}}.
\end{align*}
Next, using \eqref{haha4}, \eqref{b4m} (to lower the order of the term $\nabla_{s}^{4(m+2)} \vec{\kappa}_{i}$), 
Lemma~\ref{lem:behvarphi}, Lemma~\ref{lem:trickbdry}, Lemma~\ref{lemineqshsum} with $\ell=4m+6$, and \eqref{giomzero}, we infer that at the boundary (thus for $x \in \{ 0, 1\}$) we have
\begin{align*} 
& |\nabla_{s}^{2}\nabla_{t} S_{m+1,i} |
 =
|\sum_{\substack{[[a,b]]\leq [[4m+6,3]]\\c\leq 4m+6\\b \, \, odd}}\hspace{-.5cm}P^{a,c}_{b} (\vec{\kappa}_{i}) 
 +\sum_{\beta \in S_{4(m+2)}^{m+1}} \prod_{l=0}^{m+1} ( \varphi_{i}^{(l)})^{\beta_{l}}\sum_{\substack{[[a,b]]\leq [[4m+8-|\beta|,	1]]\\c\leq 4m+8-|\beta|\\b \, \, odd}}\hspace{-.5cm}P^{a,c}_{b} (\vec{\kappa}_{i}) \\
& \qquad +  \sum_{\beta \in S_{4(m+1)+3}^{m}} \prod_{l=0}^{m} ( \varphi_{i}^{(l)})^{\beta_{l}}\sum_{\substack{[[a,b]]\leq [[4m+8-|\beta|,	1]]\\c\leq 4m+8-|\beta|\\b \, \, odd}}P^{a,c}_{b} (\vec{\kappa}_{i}) ,  \\
 & \qquad +\sum_{\beta \in S_{4(m+1)+3}^{m}} \partial_{s} \left(\prod_{l=0}^{m} ( \varphi_{i}^{(l)})^{\beta_{l}} \right)\sum_{\substack{[[a,b]]\leq [[4(m+1)+3-|\beta|,	1]]\\c\leq 4(m+1)+3-|\beta|\\b \, \, odd}}P^{a,c}_{b} (\vec{\kappa}_{i})  \\
 & \qquad +\sum_{\beta \in S_{4(m+1)+3}^{m}}\partial_{s}^{2} \left( \prod_{l=0}^{m} ( \varphi_{i}^{(l)})^{\beta_{l}} \right)\sum_{\substack{[[a,b]]\leq [[4(m+1)+2-|\beta|,	1]]\\c\leq 4(m+1)+2-|\beta|\\b \, \, odd}}P^{a,c}_{b} (\vec{\kappa}_{i})|\\
 & \leq \sum_{\substack{[[a,b]]\leq [[4m+6,3]]\\c\leq 4m+6\\b \, \, odd}}|P^{a,c}_{b} (\vec{\kappa}_{i}) |
 +\sum_{\beta \in S_{4(m+2)}^{m+1}} \prod_{l=0}^{m+1} |\partial_{t}^{l} \varphi_{i}(t,0)|^{\beta_{l}}\sum_{\substack{[[a,b]]\leq [[4m+8-|\beta|,	1]]\\c\leq 4m+8-|\beta|\\b \, \, odd}}|P^{a,c}_{b} (\vec{\kappa}_{i})| \\
 & \leq \sum_{\substack{[[a,b]]\leq [[4m+6,3]]\\c\leq 4m+6\\b \, \, odd}}|P^{a,c}_{b} (\vec{\kappa}_{i}) |
 +   |\partial_{t}^{m} \varphi_{i}(t,0)|  \sum_{\substack{[[a,b]]\leq [[5,	1]]\\c\leq 5\\b \, \, odd}} |P^{a,c}_{b} (\vec{\kappa}_{i})|  \\
& \quad + |\partial_{t}^{m+1} \varphi_{i}(t,0)|  \sum_{\substack{[[a,b]]\leq [[1,	1]]\\c\leq 1\\b \, \, odd}} |P^{a,c}_{b} (\vec{\kappa}_{i})|\\
 & \leq \Big( \int_{I} \sum_{\substack{[[a,b]]\leq [[8m+13,6]]\\c\leq 4m+7\\b \, \, even}}|P^{a,c}_{b} (\vec{\kappa}_{i}) | ds \Big )^{\frac12} + |\partial_{t}^{m} \varphi_{i}(t,0)| \Big( \int_{I} \sum_{\substack{[[a,b]]\leq [[11,2]]\\c\leq 6\\b \, \, even}}|P^{a,c}_{b} (\vec{\kappa}_{i}) | ds \Big )^{\frac12} \\
 &\quad + |\partial_{t}^{m+1} \varphi_{i}(t,0)| \Big( \int_{I} \sum_{\substack{[[a,b]]\leq [[3,2]]\\c\leq 2\\b \, \, even}}|P^{a,c}_{b} (\vec{\kappa}_{i}) | ds \Big )^{1/2}\\
 & \leq \max\{1,\| \vec{\kappa}_i\|_{4m+8,2}\}^{\frac{8m+15}{2(4m+8)}} \\
 &\quad+ C(m ,\delta)\sum_{k=1}^{3}   \max\{1,\| \vec{\kappa}_k\|_{4m+8,2}\}^{\frac{8m +5}{2(4m+8)}}   \max\{1,\| \vec{\kappa}_i\|_{4m+8,2}\}^{\frac{11}{2(4m+8)}}\\
 &\quad+ C(m ,\delta)\sum_{k=1}^{3}   \max\{1,\| \vec{\kappa}_k\|_{4m+8,2}\}^{\frac{8m +13}{2(4m+8)}}   \max\{1,\| \vec{\kappa}_i\|_{4m+8,2}\}^{\frac{3}{2(4m+8)}}.
\end{align*}
Putting the estimates together 
using Lemma~\ref{Younggeneral} and Corollary \ref{corinter} we obtain
\begin{align*}
|\sum_{i=1}^3 [ \langle \nabla_{s}\nabla_t S_{m+1,i}, \nabla_s^2 \nabla_t S_{m+1,i} \rangle ]_0^1 | \leq \varepsilon \sum_{i=1}^3  \int_0^1 |\nabla_s^{4m+8} \vec{\kappa}_i|^2 \, ds + C_{\epsilon},
\end{align*}
with $C_{\epsilon}=C_{\epsilon}(\delta , \lambda_{i}, f_{i,0}, \mathcal{L}[f_i],\mathcal{E}_{\lambda_{i}}(f_{i,0})$. Putting  the estimates together, obtained for the boundary terms and the integral terms in \eqref{39+4}, we can finally state
\begin{align*}
& \frac{d}{dt} \sum_{i=1}^3  \frac{1}{2}\int_{I} |\nabla_t S_{m+1,i}|^{2} ds + \sum_{i=1}^3  \frac{1}{2}\int_{I} |\nabla_t S_{m+1,i}|^{2} ds+ \sum_{i=1}^3 \frac{1}{2} \int_{I}
|\nabla_s^{4m+8} \vec{\kappa}_{i}|^{2 } ds \\ 
& \leq \varepsilon \sum_{i=1}^3  \int_0^1 |\nabla_s^{4m+8} \vec{\kappa}_i|^2 \, ds + C_{\epsilon}(\delta,\lambda_{i}, f_{i,0}, \mathcal{L}[f_i], \mathcal{E}_{\lambda_{i}}(f_{i,0}))\,.
\end{align*}
Choosing $\varepsilon$ appropriately and applying Gronwall Lemma give that $$\sup_{(0,T)}\sum_{i=1}^3  \int_{I} |\nabla_t S_{m+1,i}|^{2} ds  \leq 
C=C(\delta, \lambda_{j},f_{j,0}, \mathcal{L}[f_j],\mathcal{E}_{\lambda_{j}}(f_{j,0}))
 \text{ for } j=1,2,3.$$
Together with \eqref{relSmk}, Lemma~\ref{lemineqshsum} and \eqref{giomzero} with $\ell=4m+4$, Corollary~\ref{corinter}, the uniform bound on the lengths and Young inequality, standard arguments yield
$$\sup_{(0,T)}\sum_{i=1}^3  \int_{I} |\nabla_s ^{4m+6}\vec{\kappa}_{i}|^{2} ds  \leq C(\delta, \lambda_{j},f_{j,0}, \mathcal{L}[f_j],\mathcal{E}_{\lambda_{j}}(f_{j,0}))
 \text{ for } j=1,2,3 ,$$
and also
$$ \sup_{(0,T)}\sum_{i=1}^3 \| \partial_{t}^{m}\varphi_{i}(t,\cdot)\|_{L^{\infty}(I)}  \leq C(\delta, \lambda_{j},f_{j,0}, \mathcal{L}[f_j],\mathcal{E}_{\lambda_{j}}(f_{j,0}))
 \text{ for } j=1,2,3 , $$
by \eqref{giomzero} with $\ell=4m+4$,  and Lemma \ref{lem:behvarphi}.

\bigskip

\noindent \textbf{Fourth Step}: \underline{Long-time existence}
\smallskip

First of all we show that for all $t \in (0,T)$
 \begin{align*} \sum_{i=1}^{3}\|\partial_{x}^{m} \vec{\kappa}_{i}\|_{C^{0}(\bar{I})} \leq C&=C(m,\delta, n, T,\mathcal{L}[f_j], \mathcal{E}_{\lambda}(f_{j,0}), f_{j,0}, \lambda_{j}) \text{ with } j=1,2,3. \end{align*}
From the previous step, Lemma~\ref{Qlemma}, embedding inequalities (see for instance the first inequality in \eqref{embed}) and the fact that the length of the curves remains uniformly bounded along the flow we can state that
\begin{equation}\label{sept1}
\sum_{i=1}^{3}\| \partial_{s}^{m} \vec{\kappa}_{i} \|_{C^{0}(\bar{I})} 
\leq C(m,\delta, n,\mathcal{L}[f_j], \mathcal{E}_{\lambda}(f_{j,0}), f_{j,0}, \lambda_{j}) \text{ with } j=1,2,3,
\end{equation}
for any $m\in \mathbb{N}$ and $ t \in (0,T)$. From now on the proof follows most of the arguments depicted in \cite[\S~5 (Step seventh onwards)]{DP} with just minimal changes due to the presence of the tangential component that we point out. For the sake of completeness we sketch here again the main ideas.
In the following let $\gamma_{i}:= |\partial_x f_{i}|$. By induction it can be proven that for any function $h:\bar{I} \to \R$ or vector field $h:\bar{I} \to \R^{n}$, and for any $m \in \mathbb{N}$
\begin{equation}\label{sept4}
\partial_x^m h= \gamma^m \partial_s^m h + \sum_{j=1}^{m-1} P_{m-1}(\gamma, .. , \partial_x^{m-j} \gamma) \partial_s^j h \, ,
\end{equation}
with $P_{m-1}$ a polynomial of degree at most $m-1$. A bound on $\| \partial_{x}^{\ell} \vec{\kappa}_{i}\|_{C^{0}(\bar{I})}$ follows from \eqref{sept4} taking 
$h= \vec{\kappa}_{i}$ and from bounds on  $\| \partial_{s}^{\ell} \vec{\kappa}_{i}\|_{C^{0} (\bar{I})}$ (see \eqref{sept1}) and on $\|\partial_x^\ell \gamma_{i}\|_{C^{0} (\bar{I})}$. 
Thus it remains  to estimate $\|\partial_x^\ell \gamma_{i}\|_{C^{0} (\bar{I})}$ for $\ell \in \mathbb{N}_0$. 
We start by showing that $\gamma_{i} = |\partial_x f_{i}|$ is uniformly bounded from above and below. Upon recalling \eqref{a} we see that each function $\gamma_{i}$, $i=1,2,3$, satisfies the following parabolic equation
\begin{equation}\label{sept2}
\partial_t \gamma_{i}  = (\partial_{s} \varphi_{i}   - \langle \vec{\kappa}_{i}, \vec{V}_{i} \rangle)  \gamma_{i} \, .
\end{equation}
By regularity of the initial datum we have that $1/c_{0} \leq \gamma_{i} (0) \leq c_{0} $ for some positive $c_{0}$.
From the estimates given in \eqref{sept1}, \eqref{Eigvarphi}, and the uniform estimates for the tangential components and for the lengths of the curves, it follows that  the coefficients $\|\partial_{s} \varphi_{i} \|_{C^{0} (\bar{I})} +\| \langle \vec{\kappa}_{i}, \vec{V}_{i} \rangle\|_{C^{0} (\bar{I})}$  in \eqref{sept2} are uniformly bounded and hence we  infer that $1/C \leq \gamma_{i} \leq C$, with $C$ having the same dependencies as the constant in~\eqref{sept1} as well as~$T$.
In order to prove bounds on $\partial_x^m \gamma_{i}$ we proceed by induction. Let us assume that we have shown
\begin{equation}\label{sept3}
\| \partial_x^j \gamma_{i} \|_{C^{0} (\bar{I})} \leq C(m,\delta, n, T,\mathcal{L}[f_k],\mathcal{E}_{\lambda}(f_{k,0}), f_{k,0}, \lambda_{k}) \text{ with } k=1,2,3,
 \, \mbox{ for  } 0 \leq j \leq m \, 
\end{equation}
$i=1,2,3,$ and  $m \in \mathbb{N}_{0}$.  
Choosing $h=\langle \vec{\kappa}_{i}, \vec{V}_{i} \rangle$ in \eqref{sept4}, the induction assumption and \eqref{sept1} yield that 
\begin{equation}\label{sept5}
\| \partial_x^{i} \langle \vec{\kappa}_{q}, \vec{V}_{q} \rangle \|_{C^{0} (\bar{I})} \leq C(m,\delta, n, T,\mathcal{L}[f_j], \mathcal{E}_{\lambda}(f_{j,0}), f_{j,0}, \lambda_{j}) \text{ with } j=1,2,3,
\end{equation}
for all $ 0 \leq i \leq m+1 $, $q=1,2,3$. 
Differentiating \eqref{sept2} $(m+1)$-times with respect to $x$, and recalling that $\partial_{x}(\partial_{s} \varphi_{i})=0$ by \eqref{Eigvarphi}, we find
\begin{equation*}
\partial_t \partial_x^{m+1} \gamma_{q} = - \langle \vec{\kappa}_{q}, \vec{V}_{q} \rangle \partial_x^{m+1} \gamma_{q} - \sum_{\substack{i+j=m+1\\j \leq m}} c(i,j,m,q) \partial_x^{i}(\langle \vec{\kappa}_{q}, \vec{V}_{q} \rangle) \partial_x^j \gamma_{q}\, , 
\end{equation*}
for some coefficients $c(i,j,m,q)$ and $q=1,2,3$. Together
 with \eqref{sept3}, \eqref{sept5} we derive
\begin{equation*}
\big| \sum_{\substack{i+j=m+1\\j \leq m}} c(i,j,m,q) \partial_x^{i}(\langle \vec{\kappa}_{q}, \vec{V}_{q} \rangle) \partial_x^j \gamma_{q} \big | \leq  C \, , 
\end{equation*}
with $C=C(m,\delta, n, T,\mathcal{L}[f_j], \mathcal{E}_{\lambda}(f_{j,0}), f_{j,0}, \lambda_{j}$,  with $j=1,2,3$, which implies
\begin{equation*}
\| \partial_x^{m+1} \gamma_{i} \|_{C^{0} (\bar{I})} \leq C(m,\delta, n, T,\mathcal{L}[f_j], \mathcal{E}_{\lambda}(f_{j,0}), f_{j,0}, \lambda_{j}) \text{ with } j=1,2,3,  
\end{equation*}
for $i=1,2,3$.
Next note that \eqref{sept1} implies 
$$\sum_{i=1}^{3} \|  \partial_{s}^{m} \vec{V}_{i} \|_{C^{0}(\bar{I})} \leq C(m,\delta, n, \mathcal{L}[f_j], \mathcal{E}_{\lambda}(f_{j,0}), f_{j,0}, \lambda_{j}) \text{ with } j=1,2,3,
$$ which in turns gives uniform estimates for $\|\partial_{x}^{m}\vec{V}_{i}\|_{C^{0}(\bar{I})}$, $i=1,2,3$, in view of \eqref{sept4} and the bounds for the length elements and its derivatives.

Finally, the uniform $C^{0}$-bounds on the curvature $\vec{\kappa}_{i}$, the velocity $\vec{V}_{i}$, $\gamma_{i}$, $\varphi_{i}$ (recall Lemma~\ref{lem:behvarphi}, \eqref{Eigvarphi}) and all their derivatives, allow for a smooth  extension of $f_{i}$ up  to $t=T$ and then by the short-time existence result even beyond. In view of this contradiction, the flow must exist globally.

\bigskip

\noindent \textbf{Fifth Step}: \underline{Sub-convergence} The statement follows from an adaptation of the arguments depicted in \cite[\S~5 (Step nine)]{DP} to the present case.
\smallskip

\end{proof}

\begin{rem}\label{rem:lambda0}
The long-time existence result can be extended to the case $\lambda_i\geq 0$ with just few modifications. Indeed, in order to derive the bounds on the curvatures and on the tangential components one needs only bounds on the lengths from below. The fact that $\lambda_i>0$ has been used in the proof above to derive a bound from above on the length. In the case $\lambda_i\geq 0$ since by \eqref{a} (see also Remark \ref{rem3.3}) the lengths grow at most linearly one has also a bound from above on the length in finite time. This is sufficient to conclude the argument by contradiction. See \cite[(5.14)]{DP} for a similar argument. The presence of the tangential component does not create any difficulty.    
\end{rem}

\renewcommand{\thesection}{}

\appendix\renewcommand{\thesection}{\Alph{section}}
\setcounter{equation}{0}
\renewcommand{\theequation}{\Alph{section}\arabic{equation}}

\section{Supporting lemmas}
\label{techproofs}

\begin{lemma}\label{Younggeneral}
Let $a_{1}, \ldots, a_{n}$ be positive numbers and assume $0<i_{1}+ \ldots +i_{n} <2$, where $i_{j}>0 $ for all $j=1, \ldots, n$. Then for any $\epsilon>0$ we have
$$ a_{1}^{i_{1}} \cdot a_{2}^{i_{2}} \cdot \ldots \cdot a_{n}^{i_{n}} \leq \epsilon (a_{1}^{2} + \ldots a_{n}^{2}) + C_{\epsilon}. $$
\end{lemma}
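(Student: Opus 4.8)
The plan is to reduce the general weighted-product inequality to the classical two-term Young inequality by a standard induction on the number of factors $n$, combined with a scaling normalization. First I would record the base case $n=1$: if $0<i_1<2$ then $a_1^{i_1}\le \epsilon a_1^2 + C_\epsilon$, which is immediate from Young's inequality $xy\le \epsilon x^p + C_\epsilon y^q$ applied with the conjugate exponents $p=2/i_1>1$ and $q=2/(2-i_1)$ to $x=a_1^{i_1}$ written as $a_1^{i_1}\cdot 1$; equivalently it is the elementary fact that a sublinear-in-$a_1^2$ power is dominated by $\epsilon a_1^2$ up to an additive constant.

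For the inductive step, suppose the statement holds for $n-1$ factors. Given $a_1^{i_1}\cdots a_n^{i_n}$ with $0<\sum_{j=1}^n i_j<2$ and all $i_j>0$, I would split off the last factor using Young's inequality with exponents $p=2/i_n$ and its conjugate $q=2/(2-i_n)$ (note $i_n<2$ so $p>1$): writing $a_1^{i_1}\cdots a_n^{i_n} = \big(a_1^{i_1}\cdots a_{n-1}^{i_{n-1}}\big)\cdot a_n^{i_n}$, this yields
\begin{equation*}
a_1^{i_1}\cdots a_n^{i_n} \le \epsilon\, a_n^2 + C_\epsilon \big(a_1^{i_1}\cdots a_{n-1}^{i_{n-1}}\big)^{q}
= \epsilon\, a_n^2 + C_\epsilon\, a_1^{i_1 q}\cdots a_{n-1}^{i_{n-1} q}.
\end{equation*}
The key point is that the new exponents $i_j' := i_j q = 2 i_j/(2-i_n)$ for $j=1,\dots,n-1$ still satisfy $\sum_{j=1}^{n-1} i_j' = \frac{2}{2-i_n}\sum_{j=1}^{n-1} i_j < 2$, because $\sum_{j=1}^{n-1} i_j < 2 - i_n$ by the hypothesis $\sum_{j=1}^n i_j<2$; and each $i_j'>0$. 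Hence the induction hypothesis applies to the $(n-1)$-fold product $a_1^{i_1'}\cdots a_{n-1}^{i_{n-1}'}$, giving $a_1^{i_1'}\cdots a_{n-1}^{i_{n-1}'}\le \tilde\epsilon(a_1^2+\dots+a_{n-1}^2)+C_{\tilde\epsilon}$. Choosing $\tilde\epsilon = \epsilon/C_\epsilon$ and relabelling the constant finishes the step, since $\epsilon\, a_n^2 + C_\epsilon\cdot\tilde\epsilon(a_1^2+\dots+a_{n-1}^2) + C_\epsilon C_{\tilde\epsilon} \le \epsilon(a_1^2+\dots+a_n^2) + C_\epsilon$.

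There is essentially no serious obstacle here; the only things to be careful about are: (i) the strict inequality $\sum i_j<2$ is used twice — once to guarantee $i_n<2$ so that the conjugate exponent $p=2/i_n$ exceeds $1$, and once to propagate the condition $\sum_{j<n} i_j'<2$ to the smaller product — so the argument genuinely needs strict subhomogeneity and would fail at $\sum i_j = 2$; and (ii) the constant $C_\epsilon$ and the exponents of the auxiliary parameter $\tilde\epsilon$ change at each step, but since $n$ is fixed this causes no trouble, one just tracks that the final $\epsilon$ can be made arbitrarily small. An alternative, even shorter route I might mention is to apply the weighted AM–GM / generalized Young inequality directly: with $s:=\sum_j i_j<2$ and weights $\theta_j := i_j/2 \in (0,1)$ with $\sum_j\theta_j = s/2<1$, write $a_1^{i_1}\cdots a_n^{i_n} = \prod_j (a_j^2)^{\theta_j}\le \big(\prod_j (a_j^2)^{\theta_j}\big)\cdot 1^{1-s/2}$ and apply Young's inequality for $n+1$ exponents to get $\le \sum_j \theta_j' a_j^2 + C$, then rescale each $a_j^2$ by $\epsilon$; but the induction presentation above is cleaner to write out and matches the style used elsewhere in the paper.
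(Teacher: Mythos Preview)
Your proof is correct and follows essentially the same approach as the paper's: both argue by induction on $n$, peel off one factor via Young's inequality with exponent $2/i_j$ and its conjugate, verify that the remaining $(n-1)$-fold product has rescaled exponents still summing to less than $2$, and then choose the inner $\epsilon$-parameter to close the estimate. The only cosmetic difference is that the paper splits off the first factor $a_1^{i_1}$ while you split off the last factor $a_n^{i_n}$.
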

\begin{proof}The proof goes by induction. The case $n=1$ is simply the standard Young inequality. Now suppose the claim holds for $n-1$. Then since by hypothesis we have $2(i_{2}+ \ldots +i_{n})/(2-i_{1}) <2$ we infer applying Young and the induction hypothesis that
\begin{align*}
a_{1}^{i_{1}} \cdot a_{2}^{i_{2}} \cdot \ldots \cdot a_{n}^{i_{n}} \leq \epsilon a_{1}^{2} + C_{\epsilon} (a_{2}^{i_{2}} \cdot \ldots \cdot a_{n}^{i_{n}})^{2/(2-i_{1})} \leq \epsilon a_{1}^{2} + C_{\epsilon} \delta ( a_{2}^{2} + \ldots + a_{n}^{n}) + C_{\epsilon}C_{\delta}
\end{align*}
for any $\delta >0$. Choosing $\delta < \epsilon/ C_{\epsilon}$ the claim follows.
\end{proof}

\begin{lemma}\label{PQgeneral}
Let $a_{1}, \ldots, a_{n}$ be positive numbers such that $a_{i} \geq 1$ for any $i=1, \ldots, n$ and assume $0<i_{1}+ \ldots +i_{n} <\gamma$, where $i_{j}>0 $ for all $j=1, \ldots, n$ and some $\gamma>0$. Then  we have
$$ a_{1}^{i_{1}} \cdot a_{2}^{i_{2}} \cdot \ldots \cdot a_{n}^{i_{n}} \leq  \sum_{i=1}^{n} a_{i}^{\gamma}. $$
\end{lemma}
\begin{proof}The proof follows by an induction argument over $n$ and uses the $(p,q)$-Young inequality.
Induction start: for $n=1$ the claim follows since $a_{1} \geq 1$. Suppose the claim holds for some $n-1$. Then Young inequality gives (note that $\frac{\gamma}{i_{1}} >1$)
\begin{align*}
a_{1}^{i_{1}} \cdot a_{2}^{i_{2}} \cdot \ldots \cdot a_{n}^{i_{n}} \leq \frac{i_{1}}{\gamma} a_{1}^{\gamma} + \frac{\gamma-i_{1}}{\gamma}(a_{2}^{i_{2}} \cdot \ldots \cdot a_{n}^{i_{n}})^{\frac{\gamma}{\gamma-i_{1}}} \leq a_{1}^{\gamma}+ (a_{2}^{i_{2}} \cdot \ldots \cdot a_{n}^{i_{n}})^{\frac{\gamma}{\gamma-i_{1}}}  .
\end{align*}
On the second term the induction hypothesis can be applied since $\sum_{j=2}^{n}\frac{i_{j} \gamma}{\gamma-i_{1}}< \gamma$.
\end{proof}

\begin{lemma}\label{lem:behvarphi}
Let $m \in \N$ and suppose that \eqref{IndHyp2} and \eqref{IndHyp3} hold. Let the tangential components be defined as in \eqref{defEigvarphi}. Then we have that for any $r \in \N$, $r \leq m$
\begin{align*}
|\partial_{t}^{r}\varphi_{i} (t,x)| & \leq |\partial^{r}_{t}\varphi_{i} (t,0)| +C,
\qquad \qquad |\frac{d^{r}}{dt^{r}} \mathcal{L}(f_{i})| \leq C,\\
|\partial_{t}^{m+1}\varphi_{i} (t,x)| & \leq |\partial^{m+1}_{t}\varphi_{i} (t,0)| +C (|\partial^{m}_{t}\varphi_{i} (t,0)| +1),\\
|\partial_{s} \partial_{t}^{r} \varphi_{i}(t,x)| & \leq C (|\partial_{t}^{r} \varphi_{i}(t,0)|+1)\mbox{ and}\\
|\partial_{s}^{j} \partial_{t}^{r} \varphi_{i}(t,x)|& \leq C \mbox{ for }j=2,3,4. 
\end{align*}
for any $(t,x) \in (0,T) \times [0,1]$.
Here the constant $C$ has the same dependencies as the constants appearing in \eqref{IndHyp2} and \eqref{IndHyp3}.
\end{lemma}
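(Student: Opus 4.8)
The plan is to prove all five estimates by carefully exploiting the explicit linear-interpolation structure of $\varphi_i$ given in \eqref{defEigvarphi}, together with \eqref{Eigvarphi}, the representation \eqref{dert1varphi} (and its higher-order analogues), and the induction hypotheses \eqref{IndHyp2}, \eqref{IndHyp3}. The key observation is that differentiating \eqref{defEigvarphi} in $t$ produces terms of the form $(\partial_t^k \varphi_i(t,0))$ times a bounded geometric factor, plus terms where time derivatives hit the arclength-weighted interpolation coefficient $1-\mathcal L(f_i)^{-1}\int_0^x|\partial_x f_i|\,dx$; the latter produce factors $\frac{d^j}{dt^j}\mathcal L(f_i)$ and $\int_0^x \partial_t^j|\partial_x f_i|\,dx$, which by \eqref{a} equal integrals of $(\partial_s\varphi_i - \langle\vec\kappa_i,\vec V_i\rangle)$-type quantities. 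So the estimates for $\varphi_i$ and its $x$-derivatives reduce, inductively, to (a) bounding $\frac{d^r}{dt^r}\mathcal L(f_i)$, and (b) bounding $\partial_t^r$ and $\partial_s^j\partial_t^r$ of the integrand, both of which close up against lower-order instances of the same lemma plus the curvature bounds in \eqref{IndHyp2}.

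First I would establish $|\frac{d^r}{dt^r}\mathcal L(f_i)|\le C$ for $r\le m$. Using \eqref{a} one has $\frac{d}{dt}\mathcal L(f_i) = \int_I(\partial_s\varphi_i - \langle\vec\kappa_i,\vec V_i\rangle)\,ds = -\varphi_i(t,0) - \int_I\langle\vec\kappa_i,\vec V_i\rangle\,ds$ (recall \eqref{reltangpart0}), which is bounded by \eqref{IndHyp3} and the curvature bounds. Differentiating this identity $r-1$ further times in $t$, each time derivative either lands on $\varphi_i(t,0)$ (controlled by \eqref{IndHyp3} since $r\le m$) or inside the integral $\int_I\langle\vec\kappa_i,\vec V_i\rangle\,ds$, where one uses \eqref{a} to convert $\partial_t(ds)$ into a bounded factor and Lemma~\ref{lemLin} / Lemma~\ref{evolcurvature} together with \eqref{IndHyp2} to control $\partial_t^k\langle\vec\kappa_i,\vec V_i\rangle$ in $L^1$. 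This gives the second claim.

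Next, for $|\partial_t^r\varphi_i(t,x)|$ with $r\le m$, I differentiate \eqref{defEigvarphi} $r$ times in $t$: the leading term is $\partial_t^r\varphi_i(t,0)$ times the bounded coefficient $1 - \mathcal L(f_i)^{-1}\int_0^x|\partial_x f_i|\,dx \in[0,1]$, and all remaining terms carry at least one factor $\partial_t^{k}\varphi_i(t,0)$ with $k<r$ (hence bounded by \eqref{IndHyp3}) multiplied by bounded combinations of $\frac{d^j}{dt^j}\mathcal L(f_i)$ and $\partial_t^j$-derivatives of $\int_0^x|\partial_x f_i|\,dx$, all already controlled. This yields $|\partial_t^r\varphi_i(t,x)|\le|\partial_t^r\varphi_i(t,0)|+C$. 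The case $r=m+1$ is the same computation but now the subleading terms may involve one factor $\partial_t^m\varphi_i(t,0)$, which is \emph{not} a priori bounded (it is exactly the top-order quantity being estimated in the induction step where this lemma is applied), so one only gets $|\partial_t^{m+1}\varphi_i(t,x)|\le|\partial_t^{m+1}\varphi_i(t,0)|+C(|\partial_t^m\varphi_i(t,0)|+1)$. For the spatial derivatives, I use \eqref{Eigvarphi}: since $\partial_s\varphi_i = -\mathcal L(f_i)^{-1}\varphi_i(t,0)$ and $\partial_s^k\varphi_i=0$ for $k\ge2$ pointwise in $x$, commuting $\partial_t^r$ past $\partial_s$ (using \eqref{b} to handle $[\partial_t,\partial_s]$, which costs bounded factors $\langle\vec\kappa_i,\vec V_i\rangle - \partial_s\varphi_i$) reduces $\partial_s\partial_t^r\varphi_i$ to $\partial_t^r$ of $\mathcal L(f_i)^{-1}\varphi_i(t,0)$ plus commutator terms, giving the bound $C(|\partial_t^r\varphi_i(t,0)|+1)$; for $j\ge2$ the analogous argument shows $\partial_s^j\partial_t^r\varphi_i$ is built entirely from already-bounded quantities (no free $\partial_t^r\varphi_i(t,0)$ survives, since the $j\ge2$ spatial derivative annihilates the interpolation profile), hence $\le C$.

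The main obstacle I anticipate is organizing the combinatorics of the higher time derivatives of \eqref{defEigvarphi} cleanly: one must verify by a careful (Faà di Bruno–type) bookkeeping that in $\partial_t^r$ of the product only the single term $\partial_t^r\varphi_i(t,0)\cdot(\text{bounded profile})$ carries the top-order factor, and that every other term has all its $\varphi_i$-time-derivative factors of order strictly less than $r$ (or, for $r=m+1$, at most one factor of order exactly $m$). This is a purely algebraic check but it is where the precise form of \eqref{dert1varphi} — in which $\partial_t\varphi_i$ appears linearly with a bounded coefficient and $\varphi_i(t,0)^2$ appears only multiplied by the already-controlled $(\varphi_i(t,0)+\int_I\langle\vec\kappa_i,\vec V_i\rangle\,ds)$ — must be leveraged, and one should state the induction on $r$ explicitly so that the commutator terms arising from $[\partial_t,\partial_s]$ are fed back into lower-order cases. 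Once this bookkeeping is set up, the constants inherit exactly the dependencies in \eqref{IndHyp2}–\eqref{IndHyp3} as claimed.
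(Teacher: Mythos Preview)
Your proposal is correct and follows essentially the same strategy as the paper's proof: the paper writes $\varphi_i(t,x)=\varphi_i(t,0)\,B(t,x)$ with $B(t,x)=1-\mathcal L(f_i)^{-1}\int_0^x|\partial_x f_i|\,dx\in[0,1]$, expands $\partial_t^r$ via Leibniz so that only the $k=0$ term carries $\partial_t^r\varphi_i(t,0)$, and then controls $\partial_t^k B$, $\partial_s^j\partial_t^k B$, and $\frac{d^r}{dt^r}\mathcal L(f_i)$ by induction exactly as you outline. The one cosmetic difference is that the paper does not invoke the commutator \eqref{b} for the spatial derivatives but instead differentiates the explicit formula for $B$ directly (obtaining e.g.\ $\mathcal L(f_i)\,\partial_s^k\partial_t B=\partial_s^{k-1}\langle\vec\kappa_i,\vec V_i\rangle$ for $k\ge2$), which leads to the same conclusion that for $j\ge2$ the top-order factor $\partial_t^r\varphi_i(t,0)$ drops out because $\partial_s^j B=0$.
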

\begin{proof} 
Recalling the definition \eqref{defEigvarphi} of $\varphi_{i}$, \eqref{Eigvarphi}, \eqref{reltangpart0}, and \eqref{dert1varphi} we can write 
\begin{align}
\label{W1}
\partial_{s} \partial_{t}\varphi_{i} (t,x)
&  = -\frac{\partial_{t}\varphi_{i}(t,0)}{\mathcal{L}(f_{i})}
+\frac{\varphi_{i}(t,0)}{\mathcal{L}(f_{i})^{2}} (-\varphi_{i}(t,0)-\int_{I}\langle\vec{\kappa}_{i}, \vec{V}_{i} \rangle ds)
\\
& \quad -\frac{\varphi_{i}(t,0)}{\mathcal{L}(f_{i})}  (\partial_{s} \varphi_{i} - \langle\vec{\kappa}_{i}, \vec{V}_{i} \rangle),\notag \\
\label{W2}
\partial^{m}_{s}\partial_{t}\varphi_{i} (t,x)&= \frac{\varphi_{i}(t,0)}{\mathcal{L}(f_{i})}\partial_{s}^{m-1}(\langle\vec{\kappa}_{i}, \vec{V}_{i} \rangle) \\
&= \frac{\varphi_{i}(t,0)}{\mathcal{L}(f_{i})} \sum_{\substack{[[a,b]]\leq [[m+1,2]]\\c\leq m+1\\b \, \, even}}P^{a,c}_{b} (\vec{\kappa}_{i}) \text{ for any } m \geq 2. \notag
\end{align}
To get some induction argument going let us write
\begin{align*}
\varphi_{i}(x,t)= \varphi_{i}(t,0) B(t,x), 
\end{align*}
 with 
\begin{align*}
B(t,x):=1 - \frac{1}{\mathcal{L}(f_{i})} \int_{0}^{x} |(f_{i})_{x}| dx =\frac{1}{\mathcal{L}(f_{i})} \int_{x}^{1} |(f_{i})_{x}| dx 
\end{align*}
Obviously $0 \leq B(t,x) \leq 1$.
For $m, k \in \N$ we have
\begin{align*}
\partial_{t}^{m}\varphi_{i}(x,t)&= \sum_{r=0}^{m} \binom{m}{r}  \partial_{t}^{m-r}\varphi_{i}(t,0)  \partial_{t}^{r}B(t,x) \\
&= \partial_{t}^{m}\varphi_{i}(t,0) B(t,x) +  \sum_{r=1}^{m} \binom{m}{r}  \partial_{t}^{m-r}\varphi_{i}(t,0)  \partial_{t}^{r}B(t,x),\\
\partial_{s}^{k}\partial_{t}^{m}\varphi_{i}(x,t)&= \sum_{r=0}^{m} \binom{m}{r}  \partial_{t}^{m-r}\varphi_{i}(t,0) \partial_{s}^{k} \partial_{t}^{r}B(t,x) \\&=\partial_{t}^{m}\varphi_{i}(t,0) \partial_{s}^{k} B(t,x) +  \sum_{r=1}^{m} \binom{m}{r}  \partial_{t}^{m-r}\varphi_{i}(t,0)  \partial_{s}^{k}\partial_{t}^{r}B(t,x).
\end{align*}
The term $B$ satisfies
\begin{align}
\mathcal{L}(f_{i}) B(t,x) &= \int_{x}^{1} |(f_{i})_{x}| dx, \notag \\
\mathcal{L}(f_{i}) \partial_{s}B(t,x) &= -1, \qquad \partial_{s}^{k}B(t,x) =0 \text{ for any } k \geq 2,  \notag \\
\mathcal{L}(f_{i}) \partial_{t} B(t,x) + B(t,x) \frac{d}{dt} \mathcal{L}(f_{i}) & = \int_{x}^{1} (\partial_{s} \varphi_{i} - \langle\vec{\kappa}_{i}, \vec{V}_{i} \rangle) ds = -\varphi_{i}(t,x) -\int_{x}^{1} \langle\vec{\kappa}_{i}, \vec{V}_{i} \rangle ds,  \notag \\
& =-\varphi_{i}(t,x) +\int_{x}^{1} \sum_{\substack{[[a,b]]\leq [[2,2]]\\c\leq 2\\b \, \, even}}P^{a,c}_{b} (\vec{\kappa}_{i})  ds,  \notag \\
\mathcal{L}(f_{i}) \partial_{s}\partial_{t} B(t,x) + \partial_{s}B(t,x) \frac{d}{dt} \mathcal{L}(f_{i}) &  = - \partial_{s} \varphi_{i}(t,x) + \langle\vec{\kappa}_{i}, \vec{V}_{i} \rangle , \notag 
\end{align}
so that 
\begin{align}
\label{W2bis}
\mathcal{L}(f_{i}) \partial_{s}^{m}\partial_{t} B(t,x)  &  =  \partial_{s}^{m-1}  \langle\vec{\kappa}_{i}, \vec{V}_{i} \rangle  \text{ for } m \geq 2,   \\
\mathcal{L}(f_{i}) \partial_{t}^{m} B(t,x)&= -  \sum_{r=0}^{m-1} \binom{m}{r} \left( \frac{d}{d t}^{m-r}\mathcal{L}(f_{i})  \right) \partial_{t}^{r} B(t,x)  -  \partial_{t}^{m-1}\varphi_{i}(t,x)  \notag  \\
& \quad + \partial_{t}^{m-1} \Big(\int_{x}^{1} \sum_{\substack{[[a,b]]\leq [[2,2]]\\c\leq 2\\b \, \, even}}P^{a,c}_{b} (\vec{\kappa}_{i})  ds \Big),  \notag \\
\mathcal{L}(f_{i})\partial_{s}^{k} \partial_{t}^{m} B(t,x)&= -  \sum_{r=0}^{m-1} \binom{m}{r} \left( \frac{d}{d t}^{m-r}\mathcal{L}(f_{i})  \right) \partial_{s}^{k} \partial_{t}^{r} B(t,x)  - \partial_{s}^{k} \partial_{t}^{m-1}\varphi_{i}(t,x)   \notag \\
& \quad + \partial_{s}^{k}\partial_{t}^{m-1} (\int_{x}^{1} \langle\vec{\kappa}_{i}, \vec{V}_{i} \rangle ds).  \notag 
\end{align}
Now from \eqref{reltangpart0} we know that
\begin{align*}
\frac{d}{dt} \mathcal{L}(f_{i}) = - \varphi_{i}(0,t) + \int_I \sum_{\substack{[[a,b]]\leq [[2,2]]\\c\leq 2\\b \, \, even}}P^{a,c}_{b} (\vec{\kappa}_{i})  ds 
\end{align*}
so that by \eqref{E3even}, \eqref{a}, and an induction argument that uses repeatedly (for $\psi$  a scalar quantity)
\begin{align*}
\frac{d}{dt}\left( \int_{I} \psi ds \right) = \int_{I}\left( \psi_{t}  -\psi \langle \vec{\kappa}, \vec{V} \rangle  \right)ds +\varphi_{s} \int_{I} \psi ds
\end{align*}
 we infer for $m \in \N$, 
 \begin{align*}
\frac{d^{m+1}}{dt^{m+1}} \mathcal{L}(f_{i}) &= - \partial_{t}^{m}\varphi_{i}(0,t)  \\
& \quad  + \tilde{p}\Big(\varphi_{i}(0), .., \partial_{t}^{m-1}\varphi_{i}(0),\frac{1}{\mathcal{L}(f_{i})}, \frac{d}{dt}\mathcal{L}(f_{i}),.., \frac{d^{m-1}}{dt^{m-1}}\mathcal{L}(f_{i}),r_{m}(t) \Big),
\end{align*}
with
\begin{align*}
r_{m}(t)&:=\int_I \sum_{\substack{[[a,b]]\leq [[2+4m,2]]\\c\leq 2+4m\\b \, \, even}}\hspace{-.5cm}P^{a,c}_{b} (\vec{\kappa}_{i})  ds +
\int_I  \sum_{\beta \in S^{m-1}_{4m}} \prod_{l=0}^{m-1}
(\varphi_{i}^{(l)})^{\beta_{l}} \sum_{\substack{[[a,b]]\leq [[2+4m-|\beta|,2]]\\c\leq 2+4m -|\beta|\\b \, \, even}}\hspace{-.5cm}P^{a,c}_{b} (\vec{\kappa}_{i})  ds
\end{align*}
and where $\tilde{p}$ is a polynomial in the listed variables.
Similarly
\begin{align*}
\partial_{t}^{m} \int_{x}^{1} \hspace{-.25cm}\sum_{\substack{[[a,b]]\leq [[2,2]]\\c\leq 2\\b \, \, even}}\hspace{-.5cm}P^{a,c}_{b} (\vec{\kappa}_{i})  ds 
 = 
\tilde{p}\Big(\varphi_{i}(0), .., \partial_{t}^{m-1}\varphi_{i}(0),\frac{1}{\mathcal{L}(f_{i})}, \frac{d}{dt}\mathcal{L}(f_{i}),.., \frac{d^{m-1}}{dt^{m-1}}\mathcal{L}(f_{i}),r_m^{x}(t) \Big)
\end{align*}
where
\begin{align*}
r_m^{x}(t) :=\int_x^{1} \sum_{\substack{[[a,b]]\leq [[2+4m,2]]\\c\leq 2+4m\\b \, \, even}}\hspace{-.5cm}P^{a,c}_{b} (\vec{\kappa}_{i})  ds  
+  \int_x^{1}  \sum_{\beta \in S^{m-1}_{4m}} \prod_{l=0}^{m-1}
(\varphi_{i}^{(l)})^{\beta_{l}} \sum_{\substack{[[a,b]]\leq [[2+4m-|\beta|,2]]\\c\leq 2+4m -|\beta|\\b \, \, even}}\hspace{-.5cm}P^{a,c}_{b} (\vec{\kappa}_{i})  ds \,.
\end{align*}
Finally observe that for $m \in \N \cup \{ 0 \}$
\begin{align*}
& \partial_{s}\partial_{t}^{m} \Big(\int_{x}^{1} \sum_{\substack{[[a,b]]\leq [[2,2]]\\c\leq 2\\b \, \, even}}P^{a,c}_{b} (\vec{\kappa}_{i})  ds \Big) \\
& \qquad \qquad 
 = \tilde{p}\Big(\varphi_{i}(0), .., \partial_{t}^{m-1}\varphi_{i}(0),\frac{1}{\mathcal{L}(f_{i})}, \frac{d}{dt}\mathcal{L}(f_{i}),.., \frac{d^{m-1}}{dt^{m-1}}\mathcal{L}(f_{i}),R_m(t,x) \Big)
\end{align*}
where
\begin{align*}
R_m(t,x) := \sum_{\substack{[[a,b]]\leq [[2+4m,2]]\\c\leq 2+4m\\b \, \, even}}\hspace{-.5cm}P^{a,c}_{b} (\vec{\kappa}_{i})  
+   \sum_{\beta \in S^{m-1}_{4m}} \prod_{l=0}^{m-1}
(\varphi_{i}^{(l)})^{\beta_{l}} \sum_{\substack{[[a,b]]\leq [[2+4m-|\beta|,2]]\\c\leq 2+4m -|\beta|\\b \, \, even}}\hspace{-.5cm}P^{a,c}_{b} (\vec{\kappa}_{i})  ,
\end{align*}
and more generaly for $k \in \N$
\begin{align*}
& \partial_{s}^{k}\partial_{t}^{m} \Big(\int_{x}^{1} \sum_{\substack{[[a,b]]\leq [[2,2]]\\c\leq 2\\b \, \, even}}P^{a,c}_{b} (\vec{\kappa}_{i})  ds \Big) \\
& \qquad \qquad = \tilde{p}\Big( \varphi_{i}(0), .., \partial_{t}^{m-1}\varphi_{i}(0),\frac{1}{\mathcal{L}(f_{i})}, \frac{d}{dt}\mathcal{L}(f_{i}),.., \frac{d^{m-1}}{dt^{m-1}}\mathcal{L}(f_{i}),\partial_{s}^{k-1} R_m(t,x) \Big)
\end{align*}
with
\begin{align*}
\partial_{s}^{k-1} R_m(t,x)& := \tilde{p} (\sum_{\substack{[[a,b]]\leq [[2+4m +k-1,2]]\\c\leq 2+4m+k-1\\b \, \, even}}P^{a,c}_{b} (\vec{\kappa}_{i}) ,\\
& \quad \varphi_{i}(t,x), \partial_{s} \varphi_{i}(t,x), .., \partial_{s}^{k-1} \varphi_{i}(t,x), ..,   \varphi_{i}^{(m-1)}(t,x), .., \partial_{s}^{k-1}\varphi_{i}^{(m-1)}(t,x) ) .
\end{align*}
The first three claims follows now by an induction argument. More precisely: for $m=1$ starting from
\begin{align*}
|B(t,x)| \leq C, \quad  |\varphi_{i}(t,x)| \leq |\varphi_{i}(t,0)| \leq C
\end{align*} 
 and using the expression for the derivatives of $\mathcal{L}(f_{i})$, $B$, and $\varphi_{i}(x,t)$ we first derive 
\begin{align*}
|\frac{d}{dt} \mathcal{L}(f_{i})| \leq C, \quad  |\partial_{t}B(t,x)| \leq C, \quad |\partial_{t}\varphi_{i}(x,t)|  \leq |\partial_{t}\varphi_{i}(t,0)| + C
\end{align*}
by \eqref{IndHyp3}, \eqref{IndHyp2}, Lemma~\ref{lemineqshsum} (with $\ell=4m$), and the uniform control of the length from above and below.
Then assuming that 
\begin{align}\label{genbounds}
|\frac{d^{r}}{dt^{r}} \mathcal{L}(f_{i})| \leq C, \quad |\partial_{t}^{r}B(t,x)| \leq C, \quad |\partial_{t}^{r}\varphi_{i}(x,t)|  \leq |\partial_{t}^{r}\varphi_{i}(t,0)| + C
\end{align}
holds for some $1 \leq r <m$, we derive again by \eqref{IndHyp3}, \eqref{IndHyp2}
and using Lemma~\ref{lemineqshsum} (with $\ell=4m$) that
\begin{align*}
|\frac{d^{r+1}}{dt^{r+1}} \mathcal{L}(f_{i})| \leq C, \quad  |\partial_{t}^{r+1}B(t,x)| \leq C, \quad |\partial_{t}^{r+1}\varphi_{i}(t,x)| \leq |\partial_{t}^{r+1}\varphi_{i}(t,0)| + C,
\end{align*}
respectively
\begin{align*}
|\frac{d^{m+1}}{dt^{m+1}} \mathcal{L}(f_{i})|& \leq |\partial_{t}^{m}\varphi_{i}(t,0)|+ C, \quad  |\partial_{t}^{m+1}B(t,x)| \leq C (|\partial_{t}^{m}\varphi_{i}(t,0)|+ 1), \\
|\partial_{t}^{m+1}\varphi_{i}(t,x)| &\leq |\partial_{t}^{m+1}\varphi_{i}(t,0)| + C(|\partial_{t}^{m}\varphi_{i}(t,0)|+ 1).
\end{align*}
Next, observe that from \eqref{W1}, we infer (using also the bounds derived above for the derivative of the length functional)
\begin{align*}
|\partial_{s} \partial_{t} \varphi_{i}(t,x)| &\leq C (|\partial_{t} \varphi_{i}(t,0)| + | \sum_{\substack{[[a,b]]\leq [[2,2]]\\c\leq 2\\b \, \, even}}P^{a,c}_{b} (\vec{\kappa}_{i})| ) \leq C(|\partial_{t} \varphi_{i}(t,0)| +C),\\
|\partial_{s} \partial_{t} B(t,x)| &\leq C,
\end{align*}
where we have used Lemma~\ref{lem:trickbdry}, Lemma~\ref{lemineqshsum}, and \eqref{IndHyp2}.
Next we infer using \eqref{genbounds} and \eqref{IndHyp3}
 that
\begin{align*}
&|\partial_{s} \partial_{t}^{2} B(t,x) |\leq C (|\partial_{t}\varphi_{i}(t,0)| +1) \leq C,\\
 & |\partial_{s} \partial_{t}^{2} \varphi_{i}(t,x) |\leq C (|\partial_{t}^{2}\varphi_{i}(t,0)|+|\partial_{t}\varphi_{i}(t,0)| +1) \leq C (|\partial_{t}^{2}\varphi_{i}(t,0)|+1).
\end{align*}
Repeating the same arguments inductively we obtain the fourth claim in the lemma.
Next, we observe from \eqref{W2}, \eqref{W2bis}, Lemma~\ref{lem:trickbdry}, Lemma~\ref{lemineqshsum}, and \eqref{IndHyp2} that
\begin{align*}
|\partial_{s}^{k} \partial_{t} \varphi_{i}(t,x)|=|\frac{\varphi_{i}(t,0)}{\mathcal{L}(f_{i})} \sum_{\substack{[[a,b]]\leq [[k+1,2]]\\c\leq k+1\\b \, \, even}}\hspace{-.3cm}P^{a,c}_{b} (\vec{\kappa}_{i}) | \leq C,  \qquad |\partial_{s}^{k} \partial_{t} B(t,x)| \leq C  \text{ for } k =2,3,4.
\end{align*}
The final three claims of the lemma are again proved using an induction arguments and employing all estimates achieved so far: it is important that one proves the claim first for $k=2$, then $k=3$ and finally $k=4$.
\end{proof}

\begin{lemma}\label{lem:beta}
Let $\varphi_{i}$ be the tangential component in \eqref{flownetwork1}, and $j, p \in\mathbb{N}_0$. Let $S_{p}^{j}$ be defined as in \eqref{DefBeta}. Then 
\begin{align*}
(i)& \quad \varphi_i \sum_{\substack{\beta \in S_{p}^{j}}} \prod_{l=0}^{j} ( \varphi_{i}^{(l)})^{\beta_{l}}  \sum_{\substack{[[a,b]]\leq [[k-|\beta|,B]]\\c\leq k-|\beta|\\b \, \, odd}}\hspace{-.6cm}P^{a,c}_{b} (\vec{\kappa}_{i}) 
= \sum_{\substack{\beta \in S_{p+3}^{j}}} \prod_{l=0}^{j} ( \varphi_{i}^{(l)})^{\beta_{l}}  \sum_{\substack{[[a,b]]\leq [[3+k-|\beta|,B]]\\c\leq 3+k-|\beta|\\b \, \, odd}}\hspace{-.6cm}P^{a,c}_{b} (\vec{\kappa}_{i}), \\
(ii)&\quad  \partial_t \sum_{\substack{\beta \in S_{p}^{j}}} \prod_{l=0}^{j} ( \varphi_{i}^{(l)})^{\beta_{l}} 
= \sum_{\substack{\beta \in S_{p+4}^{j+1}}} \prod_{l=0}^{j+1} c_{\beta_l}( \varphi_{i}^{(l)})^{\beta_{l}} \text{ for some constants }c_{\beta_l},  \\ 
(iii)& \quad \partial_t \Big(\sum_{\substack{\beta \in S_{p}^{j}}} \prod_{l=0}^{j} ( \varphi_{i}^{(l)})^{\beta_{l}} 
\hspace{-.2cm}\sum_{\substack{[[a,b]]\leq [[k-|\beta|,B]]\\c\leq k-|\beta|\\b \, \, even}}\hspace{-.6cm}P^{a,c}_{b} (\vec{\kappa}_{i}) \Big) 
= \sum_{\substack{\beta \in S_{p+4}^{j+1}}} \prod_{l=0}^{j+1} ( \varphi_{i}^{(l)})^{\beta_{l}} 
\hspace{-.2cm}\sum_{\substack{[[a,b]]\leq [[4+k-|\beta|,B]]\\c\leq 4+k-|\beta|\\b \, \, even}}\hspace{-.6cm}P^{a,c}_{b} (\vec{\kappa}_{i}) ,\\
(iv)& \quad \partial_t \Big(\sum_{\substack{\beta \in S_{p}^{j}}} \prod_{l=0}^{j} ( \varphi_{i}^{(l)})^{\beta_{l}} 
\hspace{-.2cm}\sum_{\substack{[[a,b]]\leq [[k-|\beta|,B]]\\c\leq k-|\beta|\\b \, \, odd}}\hspace{-.5cm}P^{a,c}_{b} (\vec{\kappa}_{i}) \Big) 
= \sum_{\substack{\beta \in S_{p+4}^{j+1}}} \prod_{l=0}^{j+1} ( \varphi_{i}^{(l)})^{\beta_{l}} 
\hspace{-.3cm}\sum_{\substack{[[a,b]]\leq [[4+k-|\beta|,B]]\\c\leq 4+k-|\beta|\\b \, \, odd}}\hspace{-.7cm}P^{a,c}_{b} (\vec{\kappa}_{i}) \\
&\qquad \qquad \qquad + \partial_{s} f \sum_{\substack{\beta \in S_{p}^{j}}} \prod_{l=0}^{j} ( \varphi_{i}^{(l)})^{\beta_{l}} 
\hspace{-.2cm}\sum_{\substack{[[a,b]]\leq [[3+k-|\beta|,B+1]]\\c\leq 3+k-|\beta|\\b \, \, even}}\hspace{-.5cm}P^{a,c}_{b} (\vec{\kappa}_{i}), 
\end{align*}
at the points where $\vec{\kappa}_i=0$, in particular at the boundary.
\end{lemma}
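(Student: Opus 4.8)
\textbf{Proof plan for Lemma~\ref{lem:beta}.}
The plan is to establish the four identities by a combination of direct bookkeeping on the multiindex sets $S_p^j$ and induction, relying on the ``algebra'' of $\varphi_i$ already encoded in the definition \eqref{DefBeta} and on the evolution formulae of Lemma~\ref{lemLin} and Lemma~\ref{evolcurvature}. The unifying principle, which I would state once at the start of the proof and then reuse, is the counting rule from Remark~\ref{rem:varphi}: a factor $\varphi_i$ ``costs'' $3$ units in the length $|\beta|$ defined in \eqref{DefBeta}, a factor $\varphi_i^{(l)}=\partial_t^l\varphi_i$ costs $3+4l$ units, and applying $\partial_t$ once raises the available budget by $4$ (and shifts admissible indices $l\mapsto l+1$). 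With this accounting in place, each identity reduces to checking that the source and target index sets match under the stated operation.

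For $(i)$ I would simply observe that multiplying by $\varphi_i$ turns a product $\prod_{l=0}^j(\varphi_i^{(l)})^{\beta_l}$ indexed by $\beta\in S_p^j$ into one indexed by $\beta+e_0$ (increment $\beta_0$), whose $|\cdot|$-length increases by exactly $3$; hence the new multiindex ranges over $S_{p+3}^j$, while the accompanying $P$-term, being multiplied by a scalar $\varphi_i$ that carries no derivatives, keeps the same number of derivatives, so $[[k-|\beta|,B]]$ becomes $[[3+k-|\beta'|,B]]$ after relabelling $\beta'=\beta+e_0$ (the total $|\beta'|$ already absorbs the $+3$). For $(ii)$, I would use the Leibniz rule: differentiating $\prod_l(\varphi_i^{(l)})^{\beta_l}$ in $t$ replaces one factor $\varphi_i^{(l)}$ by $\varphi_i^{(l+1)}$, which increases the index set's maximal order from $j$ to $j+1$ and the length budget by $(3+4(l+1))-(3+4l)=4$, giving $\beta\in S_{p+4}^{j+1}$; the binomial coefficients produced are the claimed constants $c_{\beta_l}$. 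Identities $(iii)$ and $(iv)$ then follow by combining $(ii)$ with the known action of $\partial_t$ on $P$-terms: for $(iii)$ one applies the Leibniz rule together with \eqref{E3even} (the even-$b$ case, where no tangential $\partial_s f$ term is generated) — each $\partial_t$ hitting the $P$-factor costs $4$ derivatives and leaves $b$ even, while each $\partial_t$ hitting the $\varphi$-product costs $4$ units of $|\beta|$ and bumps $j\mapsto j+1$, so both contributions land in $S_{p+4}^{j+1}$ with the $P$-bound raised to $[[4+k-|\beta|,B]]$. For $(iv)$ one uses \eqref{E3odd} instead; the extra $(\partial_s f)(\cdots)$ term in \eqref{E3odd} is exactly the source of the second line of $(iv)$, and here the crucial simplification is that $\vec{\kappa}_i=0$ at the boundary kills the term $\varphi\langle P^{\mu,d}_\nu(\vec{\kappa}),\vec{\kappa}\rangle$ from \eqref{E3odd} (as noted in the parenthetical remark after \eqref{E4sum-odd}), so the only surviving tangential contribution is the $P_b^{a,c}$ piece with even $b$, matching the claimed $[[3+k-|\beta|,B+1]]$ bound with $b$ even.

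The main obstacle I anticipate is purely bookkeeping: making sure that the length-budget shifts ($+3$ for a $\varphi$, $+4$ for a $\partial_t$) are tracked consistently between the multiindex $\beta$ and the $P$-term so that the two always add up correctly, and that the ranges of $l$ (i.e.\ the superscript $j$ in $S_p^j$) are never exceeded. In particular one must be careful in $(iv)$ that terms of the form $(\partial_s f)\,\varphi_i\,P$ arising both from differentiating $\partial_s f$ via \eqref{c} and from the tangential part of \eqref{E3odd} are collected into a single sum of the stated shape, and that the odd/even parity of $b$ is preserved as claimed (odd $b$ multiplied by the normal factors, even $b$ multiplying $\partial_s f$). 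Since all of this is a mechanical, if tedious, verification using \eqref{cs1}--\eqref{cs2}, Lemma~\ref{evolcurvature}, and Lemma~\ref{lemLin}, I would carry it out by induction on the number of applications of $\partial_t$, treating the base case directly from the definitions and the inductive step exactly as in the proof of Lemma~\ref{lemLin} in Appendix~\ref{AppBLem3.6}.
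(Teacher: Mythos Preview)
Your plan is correct and follows essentially the same approach as the paper: for $(i)$ the paper does exactly the relabelling $\beta'=\beta+e_0$ with $|\beta'|=|\beta|+3$, for $(ii)$ it appeals directly to the definition of $S_p^j$, and for $(iii)$--$(iv)$ it invokes $(i)$, $(ii)$ together with the sum formulae \eqref{E4sum-even}, \eqref{E4sum-odd} (your use of the single-term versions \eqref{E3even}, \eqref{E3odd} is equivalent). Your write-up is in fact more detailed than the paper's; the only superfluous element is the mention of induction on the number of applications of $\partial_t$, since each identity here involves a single $\partial_t$ and is verified directly.
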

\begin{proof} 
Observe that 
$$\varphi_i \sum_{\substack{\beta \in S_{p}^{j}}} \prod_{l=0}^{j} ( \varphi_{i}^{(l)})^{\beta_{l}}  \sum_{\substack{[[a,b]]\leq [[k-|\beta|,B]]\\c\leq k-|\beta|\\b \, \, odd}}\hspace{-.5cm}P^{a,c}_{b} (\vec{\kappa}_{i}) 
= \sum_{\substack{\beta^{\prime} \in S_{p+3}^{j}\\\beta\in S_{p}^{j}}} \prod_{l=0}^{j} ( \varphi_{i}^{(l)})^{\beta_{l}^{\prime}}  \sum_{\substack{[[a,b]]\leq [[k-|\beta|,B]]\\c\leq k-|\beta|\\b \, \, odd}}\hspace{-.5cm}P^{a,c}_{b} (\vec{\kappa}_{i}).$$ 
Since $|\beta^{\prime}|=|\beta|+3$, 
the proof of (i) is obtained from replacing $\beta$ by $\beta^{\prime}$ 
in the term $\sum_{\substack{[[a,b]]\leq [[k-|\beta|,B]]\\c\leq k-|\beta|\\b \, \, odd}}P^{a,c}_{b} (\vec{\kappa}_{i})$. 
The proofs of (ii) is straightforward by using the defition of the notation $S_{p}^{j}$ in \eqref{DefBeta}. 
The proofs of (iii) and (iv) are also straightforward, by applying (i), (ii) above, and \eqref{E4sum-even} and \eqref{E4sum-odd}. 

\end{proof}

\section{Proofs of technical lemmas}
\label{AppB}

\subsection{Proof of Lemma \ref{lemLin}}\label{AppBLem3.6}

\begin{proof}[Proof of \eqref{E2sumsimple} in Lemma \ref{lemLin}]
Since $P^{\mu,d}_{\nu}$ is linear combination of terms of the type 
$$ \langle \nabla_{s}^{i_{1}}\vec{\kappa}, \nabla_{s}^{i_{2}}\vec{\kappa} \rangle \ldots 
\langle \nabla_{s}^{i_{\nu-2}} \vec{\kappa}, \nabla_{s}^{i_{\nu-1}}\vec{\kappa} \rangle \nabla_{s}^{i_{\nu}}\vec{\kappa} $$
with $i_1+ \dots +i_{\nu}=\mu$ and $\max \{ i_{j} \} \leq d$, by Leibnitz's rule we need to understand terms of the kind
$$ \langle \nabla_{s}^{i_{1}}\vec{\kappa}, \nabla_{s}^{i_{2}}\vec{\kappa} \rangle \ldots \langle \nabla_t \nabla_{s}^{i_{j}} \vec{\kappa}, ..  \rangle \dots 
\langle \nabla_{s}^{i_{\nu-2}} \vec{\kappa}, \nabla_{s}^{i_{\nu-1}}\vec{\kappa} \rangle \nabla_{s}^{i_{\nu}}\vec{\kappa} $$
for $j \in \{1, \dots, \nu-1\}$ or 
\begin{equation}\label{bel1}
 \langle \nabla_{s}^{i_{1}}\vec{\kappa}, \nabla_{s}^{i_{2}}\vec{\kappa} \rangle \dots 
\langle \nabla_{s}^{i_{\nu-2}} \vec{\kappa}, \nabla_{s}^{i_{\nu-1}}\vec{\kappa} \rangle \nabla_{t} \nabla_{s}^{i_{\nu}}\vec{\kappa}
\end{equation}
with as before $i_1+ \dots +i_{\nu}=\mu$ and $\max \{ i_{k} \} \leq d$. 
By Lemma~\ref{evolcurvature} we have
\begin{align*}
\nabla_t \nabla_{s}^{i_{j}} \vec{\kappa}  
& =   \varphi \nabla_{s}^{i_{j}+1} \vec{\kappa}
+\sum_{\substack{[[a,b]] \leq [[4+i_{j},1]]\\c\leq 4+i_{j}\\b \in [1,5]\, odd}} P^{a,c}_{b} (\vec{\kappa})  + \lambda \sum_{\substack{[[a,b]] \leq [[2+i_{j},1]]\\c\leq 2+i_{j}\\b\, \in [1,3]\, odd}} P^{a,c}_{b} (\vec{\kappa})  .
\end{align*}
It follows that 
\begin{align*}
&  \langle \nabla_{s}^{i_{1}}\vec{\kappa}, \nabla_{s}^{i_{2}}\vec{\kappa} \rangle \ldots \langle \nabla_t \nabla_{s}^{i_{j}} \vec{\kappa}, .. \rangle \dots 
\langle \nabla_{s}^{i_{\nu-2}} \vec{\kappa}, \nabla_{s}^{i_{\nu-1}}\vec{\kappa} \rangle \nabla_{s}^{i_{\nu}}\vec{\kappa} \\
& =  \sum_{\substack{[[a,b]] \leq [[\mu+4,\nu]]\\c\leq 4+d\\b \in [\nu,\nu+4], odd}} P^{a,c}_{b} (\vec{\kappa})  + \lambda
\sum_{\substack{[[a,b]] \leq [[\mu+2,\nu]]\\ c \leq d+2\\b \in [\nu,2+\nu], odd}} P^{a,c}_{b} (\vec{\kappa}) 
+  \varphi\, P_{\nu}^{\mu+1, \max \{ d, i_{j}+1\}} (\vec{\kappa}) 
\end{align*}
for any $ j \in \{1, \dots, \nu -1\}$ and the same formula holds for the term in \eqref{bel1}. We get 
\begin{align} 
\nabla_{t} P^{\mu,d}_{\nu} (\vec{\kappa}) &  = \sum_{\substack{[[a,b]] \leq [[\mu+4,\nu]]\\c\leq 4+d\\b \in [\nu,\nu+4], odd }} P^{a,c}_{b} (\vec{\kappa}) +\lambda \sum_{\substack{[[a,b]] \leq [[\mu+2,\nu]]\\c\leq 2+d\\b \in [\nu,2+\nu], odd }} P^{a,c}_{b} (\vec{\kappa})   + \varphi\, P_{\nu}^{\mu+1,d+1,  } (\vec{\kappa})   \label{riemcen1},
\end{align}
and the claim follows.
\end{proof}


\begin{proof}[Proof of \eqref{E2sum} in Lemma \ref{lemLin}]
Next we observe that  formula \eqref{riemcen1} implies that
\begin{align*}
& \nabla_{t} \sum_{\substack{[[a,b]]\leq [[A,B]]\\c\leq C\\b\in [N,M], odd}}P^{a,c}_{b} (\vec{\kappa}) \\
& = \sum_{\substack{[[a,b]] \leq [[A,B]]\\c\leq C\\b\in[N,M],odd}} \Big( \sum_{\substack{[[\alpha,\beta]] \leq [[a+4,b]]\\\gamma\leq 4+c \\\beta \in [b,b+4], odd }} P^{\alpha,\gamma}_{\beta} (\vec{\kappa}) +\lambda \sum_{\substack{[[\alpha,\beta]] \leq [[a+2,b]]\\\gamma\leq 2+c\\\beta \in [b,2+b], odd }} P^{\alpha,\gamma}_{\beta} (\vec{\kappa})  
\\ 
& \qquad +\varphi\, P_{b}^{a+1, c+1} (\vec{\kappa}) \Big)\\
& =  \sum_{\substack{[[a,b]] \leq [[A+4,B]]\\c\leq C+4\\b\in[N,M+4],odd}}  P^{a,c}_{b} (\vec{\kappa}) +\lambda  \sum_{\substack{[[a,b]] \leq [[A+2,B]]\\c\leq C+2\\b\in[N,M+2],odd}} P^{a,c}_{b} (\vec{\kappa}) 
 + \varphi \sum_{\substack{[[a,b]] \leq [[A+1,B]]\\c\leq C+1\\b\in[N,M],odd}} P_{b}^{a,  c} (\vec{\kappa}) \, .
\end{align*}
\end{proof}

\begin{proof}[Proof of \eqref{E3odd} and \eqref{E3even} in Lemma \ref{lemLin}]
The proof of \eqref{E3odd} follows immediately from observing that $\partial_{t}  P^{\mu,d}_{\nu}(\vec{\kappa})= \nabla_{t}P^{\mu,d}_{\nu}(\vec{\kappa}) + (\partial_{s}f) \langle P^{\mu,d}_{\nu}(\vec{\kappa}), \partial_{t} \tau \rangle$ and using \eqref{c} and \eqref{E2sumsimple}.
Equation \eqref{E3even} is derived with similar arguments employed for the proof of \eqref{E2sumsimple}.
\end{proof} 

\begin{proof}[Proof of \eqref{E4sum-odd} and \eqref{E4sum-even} in Lemma \ref{lemLin}]
The statements follow from \eqref{E3odd} and \eqref{E3even}. 
\end{proof}

\subsection{Proof of parts of Lemma \ref{lemtriple}}\label{AppBLemtriple}
\begin{proof}[Proof of \eqref{b4m} in Lemma \ref{lemtriple}] 
Based on \eqref{bm8}, the proof follows from an induction argument. 
Suppose that \eqref{b4m} holds for some $m\in\mathbb{N}$ bigger than or equal to $2$. 
Then we take the covariant derivatives, $\nabla_t$, of  \eqref{b4m}. 
The left-hand side is simply obtained from using Lemma \ref{evolcurvature}, 
\begin{align}
  \label{b4m-lhs}
 \nabla_t \nabla_s^{4m}  \vec{\kappa}_i  
 =
 -\nabla_s^{4+4m} \vec{\kappa}_i + \varphi_i \nabla_s^{1+4m} \vec{\kappa}_i +
 \sum_{\substack{[[a,b]]\leq [[2+4m,3]]\\c\leq 2+4m \\b \, \, odd}}P^{a,c}_{b} (\vec{\kappa}_{i})
 \text{.} 
\end{align} 
By applying \eqref{E2sum} and Lemma \ref{lem:beta}, the right-hand side is 
 \begin{align}
 \nonumber
& \nabla_t \sum_{\substack{[[a,b]]\leq [[4m-2,3]]\\c\leq 4m-2 \\b \, \, odd}} 
P^{a,c}_{b} (\vec{\kappa}_{i}) 
 + \nabla_t  \sum_{\substack{\beta \in S_{4m}^{m-1}}} \prod_{l=0}^{m-1} ( \varphi_{i}^{(l)})^{\beta_{l}} 
 \sum_{\substack{[[a,b]]\leq [[4m-|\beta|,1]]\\c\leq 4m-|\beta|\\b \, \, odd}}P^{a,c}_{b} (\vec{\kappa}_{i})  \\ 
& 
=\sum_{\substack{[[a,b]]\leq [[2+4m,3]]\\c\leq 2+4m \\b \, \, odd}} 
P^{a,c}_{b} (\vec{\kappa}_{i}) 
 + \varphi_i \sum_{\substack{[[a,b]]\leq [[4m-1,3]]\\c\leq 4m-1 \\b \, \, odd}}P^{a,c}_{b} (\vec{\kappa}_{i})  \nonumber \\ 
 &
\, \, \, \, \, \, \, \, + \sum_{\substack{\beta \in S_{4+4m}^{m}}} \prod_{l=0}^{m} ( \varphi_{i}^{(l)})^{\beta_{l}} \sum_{\substack{[[a,b]]\leq [[4m+4-|\beta|,1]]\\c\leq 4m+4-|\beta|\\b \, \, odd}}P^{a,c}_{b} (\vec{\kappa}_{i})  \nonumber \\ 
 &
 = \sum_{\substack{[[a,b]]\leq [[2+4m,3]]\\c\leq 2+4m \\b \, \, odd}} 
P^{a,c}_{b} (\vec{\kappa}_{i}) 
+ \sum_{\substack{\beta \in S_{4+4m}^{m}}} \prod_{l=0}^{m} ( \varphi_{i}^{(l)})^{\beta_{l}} \sum_{\substack{[[a,b]]\leq [[4+4m-|\beta|,1]]\\c\leq 4+4m-|\beta|\\b \, \, odd}}P^{a,c}_{b} (\vec{\kappa}_{i})   \label{b4m-rhs}  
  \text{.} 
\end{align} 
 From \eqref{b4m-lhs} and \eqref{b4m-rhs}, we finish the induction argument since \eqref{b4m} holds as $m$ therein is replaced by $m+1$.
 \end{proof}

\begin{proof}[Proof of \eqref{b5+4m} in Lemma \ref{lemtriple}]
Besides  \eqref{bm5} and \eqref{bm9}, the following also holds by similar arguments,
\begin{align}
  \label{bm13}
 \sum_{i=1}^3 \nabla_s^{13}  \vec{\kappa}_i  &  = \sum_{i=1}^3 \Big(
 \sum_{\substack{[[a,b]]\leq [[11,3]]\\c\leq 11\\b \, \, odd}}P^{a,c}_{b} (\vec{\kappa}_{i}) 
 +\varphi_{i} \sum_{\substack{[[a,b]]\leq [[10,1]]\\c\leq 10\\b \, \, odd}}P^{a,c}_{b} (\vec{\kappa}_{i})  
 +\varphi_{i}^{2} \sum_{\substack{[[a,b]]\leq [[7,1]]\\c\leq 7\\b \, \, odd}} \hspace{-.5cm} P^{a,c}_{b} (\vec{\kappa}_{i}) \nonumber \\  
&\qquad  \qquad +\varphi_{i}^{3} \sum_{\substack{[[a,b]]\leq [[4,1]]\\c\leq 4\\b \, \, odd}}P^{a,c}_{b} (\vec{\kappa}_{i}) 
 +\partial_t\varphi_{i} \sum_{\substack{[[a,b]]\leq [[6,1]]\\c\leq 6\\b \, \, odd}}P^{a,c}_{b} (\vec{\kappa}_{i}) \nonumber \\ 
&\qquad \qquad + \varphi_{i} \partial_t\varphi_{i} \sum_{\substack{[[a,b]]\leq [[3,1]]\\c\leq 3\\b \, \, odd}}P^{a,c}_{b} (\vec{\kappa}_{i}) 
+ \partial_t^{2}\varphi_{i} \sum_{\substack{[[a,b]]\leq [[2,1]]\\c\leq 2\\b \, \, odd}}P^{a,c}_{b} (\vec{\kappa}_{i})  \\
 & \qquad \qquad
 +  \partial_{s}f_{i} 
 \Big[    
 \sum_{\substack{[[a,b]]\leq [[12,2]]\\ c\leq 11 \\b \, \, even}} P^{a,c}_{b} (\vec{\kappa}_{i}) 
 +  \varphi_{i}\sum_{\substack{[[a,b]]\leq [[9,2]]\\c\leq 8 \\b \, \, even}}     P^{a, c}_{b} (\vec{\kappa}_{i}) \nonumber \\ 
&\qquad \qquad \qquad \qquad +  \varphi_{i}^2 \sum_{\substack{[[a,b]]\leq [[6,2]]\\c\leq 5 \\b \, \, even}}     P^{a, c}_{b} (\vec{\kappa}_{i})
+  \partial_t \varphi_{i}\sum_{\substack{[[a,b]]\leq [[5,2]]\\c\leq 4 \\b \, \, even}}     P^{a, c}_{b} (\vec{\kappa}_{i})
  \Big ]   
  \Big) 
  ,\nonumber
\end{align} 
 that is \eqref{b5+4m} for $m=2$. In these computations we use that the term multiplying $\varphi_i$ in the tangential component of \eqref{E4sum-odd} vanishes at the boundary since there $\vec{\kappa}_i=0$. 

The proof of \eqref{b5+4m} follows then from an induction argument. 
 Since \eqref{bm5}, \eqref{bm9}, and \eqref{bm13} 
 are the cases of $m=0,1,2$ in \eqref{b5+4m}, we suppose that \eqref{b5+4m} holds for some $m\in\mathbb{N}_0$. 
Then we take the partial differentiation, $\partial_t$, of \eqref{b5+4m}. By using Lemma \ref{evolcurvature}, \eqref{d}, and the fact that $\vec{\kappa}_{i}=0$
we obtain for the left-hand side, 
\begin{align}
  \label{bm-lhs}
 \sum_{i=1}^3 \partial_t \nabla_s^{5+4m}  \vec{\kappa}_i  &  = \sum_{i=1}^3 \Big( 
 -\nabla_s^{9+4m} \vec{\kappa}_i + \varphi_i \nabla_s^{6+4m} \vec{\kappa}_i +
 \sum_{\substack{[[a,b]]\leq [[7+4m,3]]\\c\leq 7+4m \\b \, \, odd}}P^{a,c}_{b} (\vec{\kappa}_{i}) \nonumber \\ 
 & \qquad \qquad 
 +  \partial_{s}f_{i} 
 \Big[    
 \sum_{\substack{[[a,b]]\leq [[8+4m,2]]\\ c\leq 5+4m \\b \, \, even}} P^{a,c}_{b} (\vec{\kappa}_{i}) 
  \Big ]   
  \Big). 
\end{align} 
By applying 
\eqref{E4sum-odd}, \eqref{E4sum-even}, \eqref{c}, Lemma \ref{lem:beta} and using the fact that $\vec{\kappa}_{i}=0$
 the right-hand-side is equal to 
 \begin{align}
  \label{bm-rhs}
 \sum_{i=1}^3 & \Big\{ \Big[
 \sum_{\substack{[[a,b]]\leq [[7+4m,3]]\\c\leq 7+4m \\b \, \, odd}}P^{a,c}_{b} (\vec{\kappa}_{i}) 
 + \varphi_i \sum_{\substack{[[a,b]]\leq [[4+4m,3]]\\c\leq 4+4m \\b \, \, odd}}P^{a,c}_{b} (\vec{\kappa}_{i}) \Big] \\ 
 & 
 +\partial_{s}f_{i} 
 \Big[ \, 
  \sum_{\substack{[[a,b]]\leq [[6+4m,4]]\\ c\leq 3+4m \\b \, \, even}} P^{a,c}_{b} (\vec{\kappa}_{i}) 
 \Big] 
  \nonumber \\
 &
 +  \sum_{\beta \in S_{8+4m}^{m+1} } \prod_{l=0}^{m+1} ( \varphi_{i}^{(l)})^{\beta_{l}} \sum_{\substack{[[a,b]]\leq [[5+4(m+1)-|\beta|,1]]\\c\leq 5+4(m+1)-|\beta|\\b \, \, odd}}P^{a,c}_{b} (\vec{\kappa}_{i})  
 \nonumber \\ 
& + \partial_s f_i \Big[ \sum_{\beta \in S_{4+4m}^{m} } \prod_{l=0}^{m} ( \varphi_{i}^{(l)})^{\beta_{l}} \sum_{\substack{[[a,b]]\leq [[4+4(m+1)-|\beta|,2]]\\c\leq 4+4(m+1)-|\beta|\\b \, \, even}}P^{a,c}_{b} (\vec{\kappa}_{i}) \Big]\nonumber \\
 & +\partial_{s}f_{i} \Big[ 
 \sum_{\substack{[[a,b]]\leq [[8+4m,2]]\\c\leq7+4m\\b \, \, even}} P^{a,c}_{b} (\vec{\kappa}_{i}) 
 +  \varphi_i  \sum_{\substack{[[a,b]]\leq [[5+4m,2]]\\c\leq 4+4m \\b \, \, even}}P^{a,c}_{b} (\vec{\kappa}_{i}) 
  \Big ]   \nonumber \\ 
& +\partial_{s}f_{i} \Big[ 
\sum_{\beta \in S_{4+4m}^{m} } \prod_{l=0}^{m} ( \varphi_{i}^{(l)})^{\beta_{l}}   \sum_{\substack{[[a,b]]\leq [[4+4(m+1)-|\beta|,2]]\\c\leq 3+4(m+1)-|\beta|\\b \, \, even}}P^{a,c}_{b} (\vec{\kappa}_{i}) \Big] \nonumber \\ 
 &+  \sum_{\substack{[[a,b]]\leq [[7+4m,3]]\\c\leq3+4m\\b \, \, even}} P^{a,c}_{b} (\vec{\kappa}_{i}) 
 +  \sum_{\substack{\beta \in S_{4m}^{m-1}}} \prod_{l=0}^{m-1} ( \varphi_{i}^{(l)})^{\beta_{l}} \sum_{\substack{[[a,b]]\leq [[7+4m-|\beta|,3]]\\c\leq 3+4m-|\beta|\\b \, \, even}}P^{a,c}_{b} (\vec{\kappa}_{i}) \Big\} \nonumber  \\
&=  
\sum_{i=1}^3 \Big(
 \sum_{\substack{[[a,b]]\leq [[7+4m,3]]\\c\leq 7+4m\\b \, \, odd}}P^{a,c}_{b} (\vec{\kappa}_{i}) 
 +\sum_{\beta \in S_{8+4m}^{m+1}} \prod_{l=0}^{m+1} ( \varphi_{i}^{(l)})^{\beta_{l}}\sum_{\substack{[[a,b]]\leq [[9+4m-|\beta|,1]]\\c\leq 9+4m-|\beta|\\b \, \, odd}}P^{a,c}_{b} (\vec{\kappa}_{i}) 
\nonumber  \\
 & \quad +  \partial_{s}f_{i} \Big[    \sum_{\substack{[[a,b]]\leq [[8+4m,2]]\\c\leq 7+4m\\b \, \, even}} 
 P^{a,c}_{b} (\vec{\kappa}_{i}) 
 +  \sum_{\beta \in S_{4+4m}^{m}} \prod_{l=0}^{m} ( \varphi_{i}^{(l)})^{\beta_{l}}\sum_{\substack{[[a,b]]\leq [[8+4m-|\beta|,2]]\\c\leq 7+4m-|\beta|\\b \, \, even}}\hspace{-.5cm}P^{a,c}_{b} (\vec{\kappa}_{i}) 
  \Big ]    \Big) 
 \nonumber 
  \text{.} 
\end{align} 
The proof follows from using \eqref{bm-lhs} and \eqref{bm-rhs}.
\end{proof}

\section{Compatibility conditions}
\label{sec:cc}
 Besides \eqref{initdatum1} and \eqref{initdatum2},
the initial network $\Gamma_{0}$  needs to satify a set of compatibility conditions. These are required to ensure that the solution of the parabolic problem is smooth up to the initial time $t=0$.  
They are given as follows. Let $L_{i}$ denote the quasi linear differential operator of fourth order  such that $\partial_{t} f_{i}=L_{i} f$ as in \eqref{flownetwork1}, $i=1,2, 3$. Similarly for $j \in \N$ let $L_{i}^{(j)}$ denote the quasilinear differential operator of oder $4j$ such that $$ \partial_{t}^{j}f_{i} = L_{i}^{(j)}.$$
Note that \eqref{d} and more generally Lemma~\ref{lemform} plays a role in writing down precisely the operators.
The first set of compatibility conditions requires that
\begin{align*}
&L_{i}^{(j)}f_{i,0}=0  \qquad \text{ at } x=1, \text{  and for all } j \in \N, \text{ and } i=1,2,3,\\
&L_{1}^{(j)}f_{1,0}=L_{2}^{(j)}f_{2,0}=L_{3}^{(j)}f_{3,0}   \qquad \text{ at } x=0, \text{  and for all } j \in \N.
\end{align*}
Next,  let $Q^{0}_{i}$ denote the following quasilinear second-order operators:
$$ Q_{i}^{(0)}f_{i} = \vec{\kappa}_{i}, \qquad \text{ and let } Q_{i}^{(j)}= \partial_{t}^{j} Q^{(0)}_{i}.$$
The next set of compatibility conditions reads then
$$ Q_{i}^{(j)}f_{i,0} =0 \qquad  \text{ at } x \in \{0,1\} \text{ for all } j \in \N_0, \text{ and } i=1,2,3.$$ 
Finally let $W^{(0)} $ denote the third order operator
$$ W^{(0)}(f_{1},f_{2},f_{3})=\sum_{i=1}^{3} (\nabla_s \vec{\kappa}_{i} - \lambda_i \partial_s f_{i}),$$
and $W^{(j)}= \partial_{t}^{j} W^{(0)}$. The last set of compatibility conditions reads:
$$ W^{(j)}(f_{1,0},f_{2,0},f_{3,0}) =0 \quad \text{ at } x=0.$$
It is important to note that, concerning the maps $\varphi_{i}$, we have that  only $\partial_{t}^{j}\varphi_{i} (t, 0)$ plays a role in the expression for the operators $W^{(j)}, Q^{(j)}_{i}$, and $L^{(j)}_{i}$ at the junction point. Moreover since the points $P_{i}$ are fixed, it must me $\varphi_{i}(t,1)=0$ for all times, therefore $\varphi_{i}$ does not contribute at all in the  expression for $L_{i}^{(j)}$ and $Q_{i}^{(j)}$ at $x=1$.




\bibliography{ref2}
\bibliographystyle{acm}

\end{document}